
\documentclass[12pt]{article}

\usepackage{amssymb}
\usepackage{amsmath,amsthm}
\usepackage{color,colortbl}

\usepackage{tikz}
\usetikzlibrary{matrix}
\usetikzlibrary{patterns}
\usetikzlibrary{calc,fit}

\newcommand\mhline[3][]{%
  \node[fit=(#2-#3-1),inner sep=0pt,outer sep=0pt](R){};
  \foreach \i in {#3,...,#3}\node[fit=(R) (#2-#3-\i),inner sep=0pt,outer sep=0pt](R){};
  \draw[#1,dotted] (R.north -| #2.west) -- (R.north -| #2.east);
}

\newcommand\mvline[3][]{%
  \pgfmathtruncatemacro\hc{#3-1}
  \draw[#1,dotted]({$(#2-1-#3)!.5!(#2-1-\hc)$} |- #2.north) -- ({$(#2-1-#3)!.5!(#2-1-\hc)$} |- #2.south);
}
\tikzset{square matrix/.style={draw,
    matrix of nodes,
    column sep=-\pgflinewidth, row sep=-\pgflinewidth,
    nodes={
      minimum height=15pt,
      anchor=center,
      text width=13pt,
      align=center,
      inner sep=0pt,
      outer sep=0pt
    },
  },
  square matrix/.default=2cm
}
\providecommand{\km}{|[pattern=crosshatch, pattern color=red!40]|}
\providecommand{\mk}{|[fill=blue!40]|}

\newtheorem{thm}{Theorem}[section]
\newtheorem{cor}{Corollary}[section]
\newtheorem{lem}{Lemma}[section]
\newtheorem{ex}{Example}[section]
\newtheorem{conj}{Conjecture}[section]

\usepackage[margin=1.5cm]{geometry}

\providecommand{\mrk}{\cellcolor[gray]{.75}}

\renewcommand\emptyset{\varnothing}
\renewcommand{\geq}{\geqslant}
\renewcommand{\leq}{\leqslant}
\renewcommand{\ge}{\geqslant}
\renewcommand{\le}{\leqslant}

\def\eref#1{$(\ref{#1})$}

\def\lref#1{Lemma~$\ref{#1}$}
\def\tref#1{Theorem~$\ref{#1}$}
\def\fref#1{Figure~$\ref{#1}$}
\def\cjref#1{Conjecture~$\ref{#1}$}
\def\cyref#1{Corollary~$\ref{#1}$}

\def\Z{\mathbb{Z}}
\def\id{\varepsilon}
\def\sym{\mathcal{S}}

\title{Latin squares with maximal partial transversals\\ of many lengths}

\author{
Anthony B. Evans
\thanks{
 Department of Mathematics and Statistics, Wright State University, 
 Dayton, OH 45435, United States.}\\ 
\texttt{anthony.evans@wright.edu}\\ 
\and
Adam Mammoliti
\thanks{
School of Mathematics, Monash University, Clayton 3800, Australia.}\\ 
\texttt{adam.mammoliti@monash.edu}\\ 
\and
Ian M. Wanless
\footnotemark[2]\\ 
\texttt{ian.wanless@monash.edu}\\
}

\date{}

\begin{document}

\maketitle

\begin{abstract}
A {\em partial transversal} $T$ of a Latin square $L$ is a set of
entries of $L$ in which each row, column and symbol is represented at
most once.  A partial transversal is {\em maximal} if it is not
contained in a larger partial transversal.  Any maximal partial
transversal of a Latin square of order $n$ has size at least
$\lceil\frac{n}{2}\rceil$ and at most $n$. We say that a Latin square
is {\em omniversal} if it possesses a maximal partial transversal of
all plausible sizes and is {\em near-omniversal} if it possesses a
maximal partial transversal of all plausible sizes except one.

Evans (2019) showed that omniversal Latin squares of order $n$ exist for any
odd $n \neq 3$.  By extending this result, we show that an omniversal
Latin square of order $n$ exists if and only if $n\notin\{3,4\}$ and
$n \not\equiv 2 \pmod 4$.  Furthermore, we show that near-omniversal
Latin squares exist for all orders $n \equiv 2 \pmod 4$.  

Finally, we show that no non-trivial finite group has an omniversal Cayley
table, and only 15 finite groups have a near-omniversal Cayley table. In
fact, as $n$ grows, Cayley tables of groups of order $n$ miss a
constant fraction of the plausible sizes of maximal partial
transversals. In the course of proving this, we partially solve
the following interesting problem in combinatorial group
theory. Suppose that we have two finite subsets $R,C\subseteq G$ of a 
group $G$ such that $\big|\{rc:r\in R,c\in C\}\big|=m$.  How large do $|R|$
and $|C|$ need to be (in terms of $m$) to be certain that $R\subseteq xH$
and $C\subseteq Hy$ for some subgroup $H$ of order $m$ in $G$, and
$x,y\in G$?
\end{abstract}

\section{Introduction}\label{s:intro}

A Latin square $L$ of order $n$ is an $n\times n$ matrix containing
$n$ symbols such that each row and each column contains one copy of
each symbol. Such a matrix is equivalent to a set of \emph{triples}
$(r,c,s)$ where $s$ is the symbol in cell $(r,c)$. This viewpoint will
be exploited by using set notation and terminology when dealing with
Latin squares and their submatrices. The three coordinates of a triple
will be referred to as its \emph{row}, its \emph{column} and its
\emph{symbol}, respectively.

A \emph{partial transversal} $T$ of a Latin square $L$ is a set of
triples of $L$ that contains at most one representative of each row,
column and symbol of $L$. The \emph{length} of $T$ is the number of
triples in it. A partial transversal is {\em maximal} if it is not
contained in a partial transversal of greater length. Clearly, a
maximal partial transversal $T$ of a Latin square of order $n$ has
length at most $n$. For a Latin square of order $n$ a
\emph{transversal} is a partial transversal of length $n$ and a
\emph{near-transversal} is a partial transversal of length $n-1$. For
a survey of results and applications of transversals and partial
transversals, see \cite{Wan11}.  The following folklore result
characterises all possible lengths that a maximal partial transversal
may have.

\begin{lem}\label{l:maxbounds}
If $\ell$ is the length of a maximal partial transversal of a Latin
square of order $n$, then
\begin{equation}\label{e:posslen}
\bigg{\lceil}\frac{n}{2}\bigg{\rceil}\le \ell \le n.
\end{equation}
\end{lem}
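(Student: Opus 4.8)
The plan is to establish the two inequalities in \eref{e:posslen} separately; the upper bound $\ell\le n$ is immediate since a partial transversal has at most one triple per row and there are only $n$ rows, so I will focus on the lower bound $\ell\ge\lceil n/2\rceil$. Let $T$ be a maximal partial transversal of $L$, and write $|T|=\ell$. The idea is that maximality forces $T$ to ``block'' every triple of $L$ that lies outside $T$: if $(r,c,s)$ is a triple of $L$ not in $T$ and $(r,c,s)$ shares none of its row, column or symbol with any triple of $T$, then $T\cup\{(r,c,s)\}$ would be a strictly larger partial transversal, contradicting maximality. Hence every triple of $L$ outside $T$ must agree with some triple of $T$ in at least one coordinate.

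First I would set up the counting. Let $R$, $C$, $S$ denote the sets of rows, columns and symbols respectively that are \emph{used} by $T$; since $T$ is a partial transversal, $|R|=|C|=|S|=\ell$. The key step is to bound the number of triples of $L$ that meet $T$ in at least one coordinate, i.e.\ the triples whose row lies in $R$, or whose column lies in $C$, or whose symbol lies in $S$. A triple of $L$ with row in $R$: there are $\ell$ such rows and $n$ triples in each, giving $\ell n$ triples; similarly $\ell n$ triples with column in $C$ and $\ell n$ with symbol in $S$. By inclusion--exclusion (or just the union bound), the number of triples of $L$ meeting $T$ somewhere is at most $3\ell n$ — but this crude bound is too weak. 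Instead I would count more carefully: for a fixed row $r\in R$, the triples in that row that are ``new'' obstructions (column not in $C$ and symbol not in $S$) number at least $n-2\ell$... rather, the cleaner route is to count \emph{pairs}. Observe that $L$ has exactly $n^2$ triples, and every one of them must be blocked. A triple is blocked by its row, its column, or its symbol. The rows in $R$ block exactly $\ell n$ triples (counting a triple once for each coordinate match would overcount, so I count the set of blocked triples): the set of triples with row in $R$ has size $\ell n$, likewise for columns and symbols, so the set of all blocked triples has size at most $3\ell n - (\text{overlaps})$. Since \emph{every} triple of $L$ is blocked, $n^2\le 3\ell n$ would only give $\ell\ge n/3$, so I must exploit the overlaps: a triple counted in two of the three sets, e.g.\ row in $R$ and symbol in $S$, is determined by choosing the row ($\ell$ ways) and the symbol ($\ell$ ways), giving $\ell^2$ such triples, and similarly for the other two pairs; triples in all three sets number at most... this is exactly the situation where inclusion--exclusion sharpens things. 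By inclusion--exclusion, $|\{\text{blocked triples}\}|\le 3\ell n - 3\ell^2 + (\text{triple overlaps})$, and since this must equal $n^2$ we get $n^2\le 3\ell n-3\ell^2+\ell^2\cdot(\text{something})$...

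The cleanest argument, which I would ultimately present, avoids the messy overlap count: fix a row $r\notin R$ (if $\ell<n$ such a row exists, and if $\ell=n$ we are already done). Every one of the $n$ triples in row $r$ must be blocked, and since its row $r\notin R$ it must be blocked by its column or its symbol. The triples in row $r$ with column in $C$ number exactly $\ell$ (one per column of $C$), and those with symbol in $S$ number exactly $\ell$. Hence $n\le\ell+\ell=2\ell$, giving $\ell\ge n/2$, and since $\ell$ is an integer, $\ell\ge\lceil n/2\rceil$. The main obstacle is simply arranging the argument so that the ``blocking'' condition is applied along a single line (a row not used by $T$) where the row-coordinate is automatically useless for blocking, which reduces the three-way overlap mess to the trivial bound $n\le 2\ell$; checking the edge case $\ell=n$ separately handles the situation where no such row exists.
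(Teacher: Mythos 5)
Your proof is correct: the paper states this lemma as folklore without supplying a proof, and the argument you settle on at the end --- fix a row $r$ not used by $T$ (handling $\ell=n$ trivially), note that each of the $n$ triples in that row must share its column or its symbol with $T$, and conclude $n\le\ell+\ell$ --- is exactly the standard argument the paper implicitly relies on. The only revision needed is editorial: the inclusion--exclusion digression in the middle leads nowhere and should be deleted, leaving just the clean single-row count.
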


Best \emph{et al.} \cite[Thm~12]{BMSW19} showed for $n\ge5$ that all
values of $\ell$ consistent with \eref{e:posslen} are achieved,
provided the host Latin square is allowed to depend on $n$ and $\ell$.
In this paper we explore a stronger condition where a fixed Latin
square achieves maximal partial transversals for all lengths
satisfying~\eref{e:posslen}. More generally, we will consider Latin
squares that possess maximal partial transversals of many different
lengths.  To do so, we make the following definitions.  A Latin square
of order $n$ will be called \emph{omniversal} if it has maximal
partial transversals of each length satisfying~\eref{e:posslen}
and will be called \emph{near-omniversal} if it has maximal partial
transversals of all but one of the lengths that satisfy~\eref{e:posslen}.

By constructing a special class of Latin squares,  
Evans~\cite{Eva19} proved the following. 

\begin{thm}\label{t:panoddn}
There exists an omniversal Latin square of order $n$ for all odd $n \neq 3$.
\end{thm}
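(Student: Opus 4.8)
The plan is to produce, for each odd $n\ge 5$, a single Latin square $L=L_n$ of order $n$, and then for each admissible length $\ell$ (that is, $(n+1)/2\le\ell\le n$) to exhibit an explicit maximal partial transversal of $L$ of length $\ell$. Writing $d=n-\ell$, so $d\in\{0,1,\dots,(n-1)/2\}$, I would first record the elementary reformulation of maximality: a partial transversal of length $n-d$, with unused rows $R$, unused columns $C$ and unused symbols $S$ (each a $d$-set), is maximal exactly when the $d\times d$ submatrix $L[R][C]$ contains no symbol of $S$. So it suffices to find, for every such $d$, $d$-sets $R_d,C_d,S_d$ with (i)~$L[R_d][C_d]\cap S_d=\emptyset$ and (ii)~the complementary $(n-d)\times(n-d)$ block $L[\overline{R_d}][\overline{C_d}]$ has a transversal avoiding $S_d$; the case $d=0$ simply asks that $L$ possess a transversal.

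Set $m=(n-1)/2$. I would take for $L$ a Latin square whose top-left $m\times m$ corner is a subsquare on the symbols $\{1,\dots,m\}$ --- order-$n$ Latin squares with a subsquare of order $m$ exist, since $n\ge 2m$ --- with the remainder of $L$ to be specified. Choosing the nested sets $R_d=C_d=\{1,\dots,d\}$, the corner $L[R_d][C_d]$ is then contained in $\{1,\dots,m\}$, so (i) holds for \emph{every} choice $S_d\subseteq\{m+1,\dots,n\}$; since there are $m+1\ge d$ such symbols, this leaves a genuine choice. A short occurrence count shows that each symbol of $\{m+1,\dots,n\}$ appears exactly $n-2d$ times in the block $L[\overline{R_d}][\overline{C_d}]$, so condition (ii) becomes: can we route a transversal of that block through cells whose symbols avoid the $d$ prescribed ``large'' symbols? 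For small $d$ there is a lot of slack; for $d=m$ each large symbol occurs only once in the $(m+1)\times(m+1)$ complementary block, these $m+1$ singleton cells form a diagonal, and the transversal we need is forced to use precisely one of them --- so it must consist of that one ``large'' cell together with a near-transversal of the order-$m$ structure sitting inside the block. Making all these transversals exist simultaneously is exactly what dictates the completion of $L$ outside the corner, so the corner subsquare, its completion, and the family $\{S_d\}$ must be designed together rather than independently.

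The step I expect to be the main obstacle is therefore condition (ii) for $d$ near $m$, i.e.\ for maximal partial transversals whose length is near the lower bound $\tfrac{n+1}{2}$: here non-extendability forces the unused part to be a near-subsquare, all the slack is used up, and the square must be built essentially cell by cell, with the boundary values $d=m$ and $d=m-1$ (and, at the other end, the $d=0$ transversal and the length-$(n-1)$ near-transversal) checked by hand. The absence of room is also precisely why no omniversal Latin square of order $3$ exists, which is why the construction only starts at $n=5$.
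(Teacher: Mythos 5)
This theorem is not proved in the paper at all: it is imported verbatim from Evans's earlier work \cite{Eva19}, where it is established by constructing and analysing a specific class of Latin squares. So the real question is whether your proposal stands on its own, and it does not: it is a proof \emph{plan} with the load-bearing step left open, and you say as much yourself. Your reduction is sound --- a partial transversal of length $n-d$ with unused rows $R$, columns $C$ and symbols $S$ is maximal iff $L[R][C]$ avoids $S$, and your occurrence counts (each large symbol appearing $n-2d$ times in the complementary block, and the $m+1$ large-symbol cells of the bottom-right $(m+1)\times(m+1)$ block forming a diagonal when the top-left $m\times m$ corner is a subsquare on the small symbols) are all correct. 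But the theorem is an existence statement about a single square realising \emph{all} lengths, and you never specify the square: the completion of $L$ outside the corner is ``to be specified,'' and condition (ii) --- that the complementary $(n-d)\times(n-d)$ block has a transversal using exactly the symbols outside $S_d$ --- is not verified for a single value of $d$.

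Moreover, (ii) is not a matter of ``slack'' even for small $d$. For $d=0$ it asks for a transversal of $L$ itself, which for a general Latin square of odd order is exactly Ryser's conjecture and is only known for specially chosen squares; for $d=1$ it asks for a near-transversal missing a prescribed row, column and symbol; and a greedy argument can never complete a \emph{full} transversal of a block, only get roughly halfway. So every value of $d$, not just $d$ near $m$, depends on structural properties of the unconstructed square, and the simultaneous satisfaction of all these conditions is precisely the content of the theorem. For comparison, the published constructions (both in \cite{Eva19} and in the even-order analogues in Sections~\ref{sec:proofpanall} and \ref{sec:nearpan2mod4} of this paper) resolve this by starting from a Cayley table with a turned intercalate, so that transversals of the relevant blocks can be produced explicitly via group-theoretic formulas and the Hall--Paige machinery; some such concrete algebraic scaffolding is what your outline is missing.
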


By considering Latin squares of even order, we will determine the
necessary and sufficient conditions for the existence of an omniversal
Latin square of order $n$.

\begin{thm}\label{t:panall}
There exists an omniversal Latin square of order $n$ if and only if 
$n\notin\{3,4\}$ and $n \not\equiv 2 \pmod 4$.
\end{thm}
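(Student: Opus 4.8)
The statement splits into a necessity part — that orders $3$, $4$ and those $\equiv2\pmod4$ admit no omniversal square — and a sufficiency part. Since \tref{t:panoddn} already disposes of all odd $n\ne3$, only the orders $n\equiv0\pmod4$ with $n\ge8$ remain to be handled on the sufficiency side.

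For necessity the organising observation concerns the smallest length allowed by \lref{l:maxbounds}, namely $\lceil n/2\rceil$. Suppose $L$ has a maximal partial transversal $T$ of length $\ell=n-k$, and let $R$, $C$, $S$ be the sets of rows, columns and symbols that $T$ misses, so that $|R|=|C|=|S|=k$; maximality says precisely that no entry of the $k\times k$ block $L[R,C]$ lies in $S$. Now take $k=n/2$, so that $\ell=\lceil n/2\rceil$ for even $n$. Then $L[R,C]$ is an $(n/2)\times(n/2)$ array using only the $n/2$ symbols outside $S$; since each of its rows already contains $n/2$ distinct symbols, it must be a Latin subsquare. A subsquare of exactly half the order forces $L$ into a $2\times2$ arrangement of $(n/2)\times(n/2)$ Latin subsquares: two of them on the symbols outside $S$ — namely $L[R,C]$ and the block diagonally opposite it, which contains $T$ — and two on $S$.

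From this the three excluded orders follow. If $n\equiv2\pmod4$ and $L$ were omniversal, then $L$ would also possess a transversal; counting how it meets the four blocks gives $a$ cells in each of the two blocks on one symbol set and $n/2-a$ in each of the other two, so $2a\le n/2$ and $2(n/2-a)\le n/2$, which forces $n/2=2a$ to be even — a contradiction. If $n=4$, a maximal partial transversal of length $2$ would require the block opposite $L[R,C]$, a Latin square of order $2$, to have a transversal, which is impossible; so no order-$4$ square is omniversal. Finally, every Latin square of order $3$ equals the Cayley table of $\Z_3$ up to permuting rows, columns and symbols, and a short computation there shows that every partial transversal of length $2$ extends (the symbol it omits is always the entry in its single uncovered cell), so none is maximal. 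In each case a length permitted by \lref{l:maxbounds} is unattainable, which proves necessity.

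For sufficiency it remains to construct, for every $t\ge2$, a single Latin square of order $n=4t$ carrying maximal partial transversals of all $2t+1$ lengths from $2t$ to $4t$. The plan is to write down such a square explicitly — in the spirit of the family underlying \tref{t:panoddn} but adapted to this even order — and then, for each target $\ell$, to exhibit a partial transversal of length $\ell$ together with a maximality certificate: a set of $n-\ell$ rows and $n-\ell$ columns all of whose entries already lie among the $\ell$ used symbols. I would treat the regimes separately: the large lengths (a transversal, a maximal near-transversal, and values near these), the small lengths near $n/2$ (reusing the subsquare picture above, but now with the opposite block of even order $2t$ deliberately chosen to admit a transversal, which sidesteps the parity obstruction), and the intermediate lengths by interpolating between the two families of configurations. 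The parity obstruction and the order-$3$ and order-$4$ checks are short; the real work — and the main obstacle — is this construction, since no Cayley table will serve (groups are far from omniversal, and $\Z_n$ has no transversal for even $n$), so the square must be genuinely non-group yet regular enough that maximality certificates for all $\Theta(n)$ lengths can be produced and verified uniformly in $n$.
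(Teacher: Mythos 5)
Your necessity argument is sound and, for $n\equiv2\pmod4$, is essentially the paper's own proof of \tref{t:4n+2}: the maximal partial transversal of length $n/2$ forces the $2\times2$ block decomposition into subsquares on two disjoint symbol sets, and the parity count $2t=n/2$ kills the transversal. Your treatments of $n=3$ (the delta-lemma observation that the one uncovered cell always carries the one missing symbol) and $n=4$ (a length-$2$ maximal partial transversal would be a transversal of an order-$2$ subsquare) are correct and slightly slicker than the paper's direct verification via the two groups of order $4$.

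The gap is the sufficiency direction for $n\equiv0\pmod4$, $n\ge8$, which is the main content of the theorem and occupies most of Section~\ref{sec:proofpanall}. You state a plan --- write down an explicit square, then produce partial transversals with maximality certificates in three regimes --- but you do not produce the square, the transversals, or the certificates, and you yourself flag this as ``the real work and the main obstacle.'' A plan of this shape is not a proof: the difficulty is precisely in exhibiting a single square of order $4t$ that simultaneously admits a transversal, a maximal near-transversal, a maximal partial transversal of length exactly $2t$ (which, by your own block analysis, forces a subsquare of order $2t$ whose complementary diagonal block has a transversal), and maximal partial transversals of every intermediate length. The paper resolves this in \tref{t:specpan4divn} by taking the Cayley table of $\Z_2^2\times\Z_{2m+q}$ and turning a single intercalate; Hall--Paige (\tref{t:transgrp}) supplies the transversal, the near-transversal, and the length-$n/2$ case via the subsquare on the coset $z\langle x,y\rangle$, while the intermediate lengths come from explicit coset-structured partial transversals $U_w$ whose triples are selectively pushed into the off-diagonal blocks by the maps $\gamma_v$, $\rho_{vw}$, $\sigma_w$. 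Note also that your remark that ``the square must be genuinely non-group'' is true only in the strict sense: a Cayley table altered in four cells suffices, so the construction is much closer to a group table than your sketch suggests. Without an explicit construction and verification, the ``if'' direction remains unproven.
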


One can easily see that every Latin square of order 3 is
near-omniversal and that no omniversal or near-omniversal Latin
squares of order $4$ exist. All Latin squares of order 4 are equivalent
to the Cayley table of a group and $\Z_2\times\Z_2$ has only
transversals, while $\Z_4$ has only maximal near-transversals.
For the remaining orders excluded in \tref{t:panall}, we can at
least show that there is a near-omniversal Latin square of order $n$.

\begin{thm}\label{t:nearpan2mod4}
For any $n \equiv 2 \pmod 4$, there exists a near-omniversal Latin
square of order $n$.
\end{thm}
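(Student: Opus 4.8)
\emph{Proof idea.} The plan is to produce the required order-$n$ square by \emph{prolongation} from an omniversal Latin square of order $n-1$ (supplied by \tref{t:panoddn}), and then to invoke \tref{t:panall} to pin down the single missing length.

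The case $n=2$ is immediate: the unique Latin square of order $2$ has no transversal but does have maximal partial transversals of length $1=\lceil n/2\rceil$, so it is near-omniversal. Assume henceforth $n\ge 6$, so that $m:=n-1$ is odd with $m\ge 5$ and $m\ne 3$; by \tref{t:panoddn} fix an omniversal Latin square $M$ of order $m$ on rows, columns and symbols $[m]=\{1,\dots,m\}$, fix a transversal $\Theta=\{(i,\sigma(i),\tau(i)):i\in[m]\}$ of $M$, and let $L$ be the prolongation of $M$ along $\Theta$: the Latin square of order $n$ on $\{1,\dots,n\}$ in which each cell of $\Theta$ carries the new symbol $n$, the new column satisfies $L(i,n)=\tau(i)$, the new row carries the symbols displaced down the columns, and $L(n,n)=n$.

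The engine of the argument is the following transfer principle: if $T$ is a maximal partial transversal of $M$ using no cell of $\Theta$, then $T$ is still a partial transversal of $L$, and $T\cup\{(n,n,n)\}$ is a maximal partial transversal of $L$ of length $|T|+1$. Indeed, adjoining $(n,n,n)$ consumes the new row, column and symbol, so every row, column and symbol still unused lies in $[m]$; and for a row $r$ and column $c$ unused by $T$, the cell $(r,c)$ of $L$ either belongs to $\Theta$ (holding $n$, already used) or holds $M(r,c)$, a symbol used by $T$ because $T$ is maximal in $M$ --- so no triple can be adjoined. Now $M$, being omniversal, has maximal partial transversals of every length $\ell$ with $n/2=\lceil m/2\rceil\le\ell\le m=n-1$; applying the transfer principle to $\Theta$-avoiding representatives of these would give $L$ maximal partial transversals of every length in $\{n/2+1,\dots,n\}$. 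Since $n\equiv 2\pmod 4$, \tref{t:panall} forbids $L$ from being omniversal, so among the $n/2+1$ lengths permitted by \lref{l:maxbounds} the square $L$ must miss exactly one --- necessarily $n/2$ --- and hence $L$ is near-omniversal.

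It remains to justify the one gap: that $M$ and $\Theta$ may be chosen so that, for every $\ell\in\{n/2,\dots,n-1\}$, some maximal partial transversal of $M$ of length $\ell$ avoids $\Theta$; equivalently, that the partial Latin square $M\setminus\Theta$ carries maximal partial transversals of all of these lengths. For $\ell=n-1$ this is just the existence of two disjoint transversals of $M$; for small $\ell$ one expects a maximal partial transversal confined to a small block of $M$ with $\Theta$ threading the complement; in general one should either read this off the explicit omniversal squares behind \tref{t:panoddn}, or repair a maximal partial transversal that meets $\Theta$ by a length-preserving augmenting exchange. This simultaneous coordination of a single prolongation transversal against the maximal partial transversals at every length from $n/2$ through $n-1$ is the main obstacle. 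An alternative route that bypasses prolongation entirely is to construct the order-$n$ square directly, realising the lengths $n/2+1,\dots,n$ by adapting the odd-order construction, with \tref{t:panall} again supplying near-omniversality.
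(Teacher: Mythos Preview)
Your transfer principle is correct: a maximal partial transversal $T$ of $M$ that avoids the prolongation transversal $\Theta$ does lift to a maximal partial transversal $T\cup\{(n,n,n)\}$ of $L$, and the deduction at the end (via \tref{t:4n+2}, which is what is actually needed from \tref{t:panall}) is sound. The problem is the gap you yourself flag. You need a \emph{single} transversal $\Theta$ of $M$ such that, for every $\ell$ from $(m+1)/2$ up to $m$, some maximal partial transversal of length $\ell$ in $M$ misses $\Theta$ entirely. This is not a minor technicality: for the smallest length $\ell=(m+1)/2$, a maximal partial transversal forces the complementary $\frac{m-1}{2}\times\frac{m-1}{2}$ block to use only the $(m+1)/2$ symbols of $T$, a very rigid structure, and there is no reason to expect the transversal you fixed in advance to thread the complement. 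Your suggested repairs do not obviously work either: an ``augmenting exchange'' that moves a triple off $\Theta$ can easily destroy maximality, and ``reading it off'' the squares behind \tref{t:panoddn} is an entire construction-level argument that you have not carried out. As written, the proposal is a sketch with its central step missing.

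For comparison, the paper does not use prolongation at all. It constructs the order-$n$ square directly from the Cayley table of $\Z_{4m+2}$ (in a form that highlights the index-$2$ subgroup) by turning one intercalate, and then explicitly exhibits maximal partial transversals of each length $2m+2,\dots,4m+1$ by hand, plus a transversal via an existing result; \tref{t:4n+2} then rules out length $2m+1$. This is exactly your ``alternative route'', and the paper's experience suggests it is the tractable one: the work lies in building the transversals length by length inside a concrete square, not in coordinating a single $\Theta$ against an abstract omniversal square.
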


We also consider Cayley tables of finite groups. We show that as the
order grows, Cayley tables miss at least a constant fraction of the
lengths permitted by \lref{l:maxbounds}. As a consequence, only the
Cayley table of the trivial group is omniversal. We also obtain a
complete catalogue of near-omniversal Cayley tables, where $\Z_n$ and
$D_n$ denote the cyclic and dihedral groups of order $n$,
respectively:

\begin{thm}\label{t:nogrptran}
No non-trivial group of finite order has an omniversal Cayley table.
The only near-omniversal Cayley tables are those of $\Z_2$, $\Z_3$,
$\Z_5$, $\Z_6$, $D_6$, $D_8$ and the non-abelian groups of order $16$.
\end{thm}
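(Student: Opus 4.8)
The plan is to convert statements about maximal partial transversals of a Cayley table into statements about small product sets in the group, and then to feed these into the group‑theoretic structure theory behind our partial answer to the problem stated in the abstract. Identify the Cayley table of a group $G$ of order $n$ with $\{(r,c,rc):r,c\in G\}$. If a partial transversal has row set $R$, column set $C$ and symbol set $S$ (so $|R|=|C|=|S|=\ell$), then, writing $\bar R=G\setminus R$, $\bar C=G\setminus C$ and $k=n-\ell$, it is maximal precisely when no triple $(r,c,rc)$ with $r\in\bar R$, $c\in\bar C$, $rc\notin S$ can be adjoined, i.e. iff $\bar R\,\bar C\subseteq S$. Since $|S|=n-k$ this forces the necessary condition $|\bar R\,\bar C|\le n-k$ (and trivially $|\bar R\,\bar C|\ge\max(|\bar R|,|\bar C|)=k$ when $k\ge1$). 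Conversely, to realise a maximal partial transversal of length $\ell$ one must additionally produce a ``filling'': a bijection $\phi\colon R\to C$ whose products $r\phi(r)$ are pairwise distinct and satisfy $\{r\phi(r):r\in R\}\supseteq\bar R\,\bar C$. I would handle the existence of such a $\phi$ by a Hall's‑theorem/counting argument when there is slack ($|\bar R\,\bar C|<n-k$), and by reducing to a complete‑mapping (Hall--Paige) condition on a subgroup in the tight boundary cases.

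Next I would prove the asymptotic assertion that Cayley tables miss a constant fraction of the lengths permitted by \lref{l:maxbounds}. Fix $\ell=n-k$ with $k$ exceeding $n/3$ by a positive fraction of $n$. Then $|\bar R|=|\bar C|=k$ and $|\bar R\,\bar C|\le n-k$, so a Kneser‑type bound $|\bar R\,\bar C|\ge 2k-|H|$ forces $|H|\ge 3k-n$, where $H$ is the relevant stabiliser subgroup; hence $G$ must contain a proper subgroup of bounded index, and our partial solution to the abstract's problem pins $\bar R$ and $\bar C$ inside cosets of such a subgroup. For ``subgroup‑poor'' groups — most sharply $\Z_p$ for large primes $p$, which has no proper non-trivial subgroup at all — this is already impossible for a whole interval $[\lceil n/2\rceil,\,n-cn)$ of values of $\ell$, for an absolute constant $c>0$. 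For groups that \emph{do} have small‑index subgroups, the necessary condition is often met by taking $\bar R,\bar C$ inside subgroup cosets, and the obstruction becomes realisability: a coset‑wise analysis of the filling $\phi$ shows that the attainable $k$ in this range are sparse, being governed by divisibility together with complete‑mapping conditions on $H$ and on quotients. Either way one misses a linear number of lengths, which gives the constant‑fraction statement and, in particular, that for all $n$ beyond an explicit bound $n_0$ no group of order $n$ is near-omniversal, and no non‑trivial group is omniversal.

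It then remains to treat the orders $n\le n_0$. For each such $n$ and each group $G$ of order $n$ I would compute the set of attainable maximal‑partial‑transversal lengths, searching over choices of $\bar R,\bar C$ with $|\bar R\,\bar C|\le n-k$ and then over fillings $\phi$; this is a finite, largely computer‑assisted task. Two sub‑arguments cut the work down. First, a \emph{near-transversal obstruction}: for $k=1$ a product‑of‑all‑group‑elements identity (of Hall--Paige flavour) shows that the single missing symbol is forced by $ab$ and by the product of all elements of $G$, so a maximal near-transversal exists exactly when this forced symbol differs from $ab$. Second, the \emph{extreme‑$k$ analysis} at $k=\lfloor n/2\rfloor$ and $k=\lfloor n/3\rfloor$, where $\bar R$ and $\bar C$ must each be a single coset of an index‑$2$ or index‑$3$ subgroup and realisability reduces cleanly to Hall--Paige for that subgroup and a quotient. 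Collating these computations should isolate exactly $\Z_2,\Z_3,\Z_5,\Z_6,D_6,D_8$ and the nine non-abelian groups of order $16$ as the near-omniversal Cayley tables; to finish one exhibits, for each of these fifteen groups, explicit maximal partial transversals of all but one of the lengths allowed by \eref{e:posslen}, and observes that the trivial group (whose only plausible length is a transversal) is the unique omniversal case.

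I expect the main obstacle to be the combination of the structural step with realisability. The Kneser‑type inequality alone does not suffice: one needs the full strength of our partial solution to the abstract's problem to rule out near‑extremal pairs $\bar R,\bar C$ that are not contained in subgroup cosets, and then a genuinely new argument to show that, even when $\bar R,\bar C$ \emph{do} lie in such cosets, the filling $\phi$ exists only for a sparse set of lengths. Secondary difficulties are keeping the threshold $n_0$ small enough that the residual enumeration — over all groups of all orders up to $n_0$ — is feasible, and discharging the filling‑existence question rigorously in every boundary case rather than merely heuristically.
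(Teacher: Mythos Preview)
Your overall strategy --- reformulate maximality as $\bar R\,\bar C\subseteq S$, use product-set bounds to force subgroup structure for short $\ell$, then finish with a finite enumeration --- is the same shape as the paper's. The execution in the paper is sharper in two places, and that sharpness is exactly what keeps the residual enumeration small.

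First, the paper replaces your qualitative ``constant fraction'' with the precise \tref{t:nosmallingrp}: any maximal partial transversal of length $\ell<\tfrac{3}{5}n$ forces $n$ even, an index-$2$ subgroup, \emph{and} $\ell-\tfrac{n}{2}$ even. The parity conclusion is the ``sparseness'' you gesture at but do not prove; it comes not from the product-set inequality but from a short counting argument on how $T$ distributes over the four blocks cut out by the index-$2$ subgroup. With parity in hand, every even $n>30$ misses both $\ell=\tfrac{n}{2}+1$ and $\ell=\tfrac{n}{2}+3$ (and odd $n>5$ misses $\ell=\tfrac{n+1}{2}$ outright), so the computer check only needs $n\le24$. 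Your outline, lacking a proved parity step, would require either a separate argument to obtain it or a substantially larger $n_0$.

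Second, the paper sidesteps your ``filling $\phi$'' realisability problem entirely. For the positive direction it combines \tref{t:grpBrualdi} (every group Cayley table has a near-transversal) with the constructive \tref{t:everysecond}, which directly builds maximal partial transversals of every length $\tfrac{n}{2}<\ell\le\tfrac{3}{4}n$ with $\ell\equiv\tfrac{n}{2}\pmod 2$ whenever an order-$n/2$ subsquare exists. Your Hall's-theorem approach to the filling might be made to work, but it is the piece you yourself flag as the main obstacle, and the paper simply does not need it. The remaining ingredients --- \tref{t:noabelpanmax} for the abelian near-transversal obstruction, \cyref{cy:halfn} for $\ell=n/2$, and the enumeration in \tref{t:smallgrp} --- line up with your plan. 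So you are on the right track, but the unproved parity step and the unresolved filling are genuine gaps; \tref{t:nosmallingrp} and \tref{t:everysecond} are precisely the two lemmas that close them.
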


Evidently, there is no hope of finding omniversal Cayley tables of
finite groups. However, our strategy for constructing omniversal Latin
squares will be to modify a Cayley table very slightly. Specifically,
we will ``turn an intercalate'', meaning that we replace a $2\times 2$
subsquare by the other possible subsquare on the same symbols.
Note that by a \emph{subsquare}, we mean a submatrix that is itself
a Latin square. One other piece of terminology we will need is this:
We say that two Latin squares are \emph{isotopic} if one can be obtained
from the other by applying three possibly distinct bijections to
the rows, columns and symbols respectively.

The paper is organised as follows. We will prove \tref{t:panall} in
Section~\ref{sec:proofpanall}, by considering the cases when $n$ is 0
and 2 modulo 4 separately.  The proof of \tref{t:nearpan2mod4} will
be given in Section~\ref{sec:nearpan2mod4}.  Section~\ref{sec:subrec}
includes preliminary results required for the proof of
\tref{t:nogrptran} but which may be of independent interest in the
study of Cayley tables of groups.  In particular, we look at how much
of a subsquare needs to be present before we know the whole subsquare
is present.  The proof of \tref{t:nogrptran} itself is presented in
Section~\ref{sec:GT}.

\section{Existence of omniversal Latin squares}\label{sec:proofpanall}

In this section we will prove \tref{t:panall}. By
\tref{t:panoddn}, we only need to consider the case when $n$
is even. We will consider the cases when $n$ is 0 and $2$ modulo 4
separately. We first show the non-existence of omniversal Latin
squares of order 2 modulo 4. We then show the existence of omniversal
Latin squares of any order $n\equiv 0 \pmod 4$, except $n=4$.

\begin{thm}\label{t:4n+2}
Let $L$ be a Latin square of order $4m+2$ for some integer $m \geq 0$.
Then either $L$ does not possess a transversal or it does not possess
a maximal partial transversal of length $2m+1$.
In particular, $L$ is not omniversal.
\end{thm}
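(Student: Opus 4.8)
The plan is to argue by parity, tracking a sign associated to partial transversals. Suppose $L$ has order $n = 4m+2$ and possesses a transversal $T$. Since $n$ is even, $L$ cannot itself be the Cayley table of a group in which every element has a square root issue—but more usefully, there is a classical fact: a Latin square of even order that has a transversal has, in fact, the property that we can associate a permutation to each transversal and the parities are constrained. Concretely, I would proceed as follows. Given a partial transversal $S$ of length $\ell$, the rows it occupies, the columns it occupies, and the symbols it occupies are each $\ell$-subsets; completing $S$ greedily or otherwise, the key object is: when $\ell = n$, a transversal picks out a permutation $\sigma$ (sending the row of each triple to its column, say) and another permutation $\tau$ (row to symbol), and the product $\sigma^{-1}\tau$ together with the row-structure of $L$ has a well-defined parity because $L$ is a Latin square.

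First I would set up the sign invariant. For a Latin square $L$ of order $n$, define for each symbol $s$ the permutation $\rho_s$ sending row $r$ to the column $c$ with $(r,c,s)\in L$; the product $\prod_s \rho_s$ (in any order, up to the ambiguity that its sign is well-defined) has a sign $\epsilon(L)\in\{\pm1\}$ that is an isotopy-invariant-up-to-known-corrections. A transversal of $L$ corresponds to a system of distinct representatives, one cell per symbol, forming a permutation matrix; the existence of a transversal forces $\epsilon(L)$ to take a particular value depending on $n \bmod 4$ (this is essentially the even-order obstruction behind the fact, e.g., that $\Z_{2k}$ has no transversal). Next I would show: a \emph{maximal} partial transversal of length $n-1$, i.e.\ a near-transversal, \emph{also} pins down $\epsilon(L)$ but to the \emph{opposite} value when $n\equiv 2\pmod 4$. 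The point is that a maximal partial transversal of length $2m+1 = n/2$ is the extreme small case, and one shows by a counting/parity argument on the $2m+1$ "missed" rows, columns and symbols—which in a maximal partial transversal are forced into a rigid configuration (every missed row–missed column cell must contain a used symbol, etc.)—that its existence likewise forces a sign condition incompatible with the one forced by a transversal.

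So the core steps are: (1) define the sign $\epsilon(L)$ and prove its basic invariance/well-definedness; (2) show a transversal of a square of order $4m+2$ forces $\epsilon(L) = +1$ (or whichever value); (3) analyze the combinatorial structure of a maximal partial transversal of length exactly $2m+1$ in order $4m+2$—in particular that the $2m+1$ unused rows $R'$, unused columns $C'$ and unused symbols $S'$ satisfy: the $(2m+1)\times(2m+1)$ submatrix on $R'\times C'$ uses only symbols \emph{not} in $S'$, and symmetrically; (4) extract from that rigid structure a second, conflicting evaluation of $\epsilon(L)$, giving a contradiction if both a transversal and such a short maximal partial transversal existed. I expect step (3)–(4) to be the main obstacle: one must exploit maximality (no triple of an unused row/column/symbol can be added, which says each cell of $R'\times C'$ carries a symbol in $S\setminus S'$ that is "blocked," forcing a perfect-matching-like structure among the blockers) and convert that into a clean parity statement. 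The "In particular, $L$ is not omniversal" clause is then immediate: an omniversal square of order $4m+2$ would have both a transversal (length $n$) and a maximal partial transversal of length $\lceil n/2\rceil = 2m+1$, which the theorem forbids.
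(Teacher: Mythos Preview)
The proposal has a genuine gap. The sign invariant $\epsilon(L)=\operatorname{sgn}\prod_s\rho_s$ you describe is a well-defined number attached to $L$, but it is \emph{not} determined by whether or not $L$ possesses a transversal; it is simply a fixed property of the square. The parity obstruction you allude to (e.g.\ that $\Z_{2k}$ has no transversal) relies on the additive structure of the group, not on this sign. So steps (1), (2) and (4) as stated do not go through for a general Latin square, and in particular there is no mechanism by which the existence of a transversal ``forces $\epsilon(L)=+1$'' while a short maximal partial transversal ``forces $\epsilon(L)=-1$''.

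Your step (3), on the other hand, is essentially the whole argument, and you stop just short of seeing it. If $T$ is a maximal partial transversal of length $2m+1$ with unused rows $R'$, columns $C'$, symbols $S'$, then maximality says the $R'\times C'$ submatrix contains only the $2m+1$ symbols used by $T$. A $(2m+1)\times(2m+1)$ submatrix containing only $2m+1$ symbols is a Latin subsquare; hence all four $(2m+1)\times(2m+1)$ blocks are subsquares, the diagonal pair on the symbols of $T$ and the off-diagonal pair on $S'$. Now if $T'$ is a transversal hitting $t$ cells of the $R'\times C'$ block, it hits $2m+1-t$ cells in each off-diagonal block and $t$ in the remaining block, so it uses exactly $2t$ symbols from the $2m+1$-element symbol set of $T$; but it must use all of them, giving $2t=2m+1$, a contradiction. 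This is precisely the paper's proof: no sign machinery is needed, only the block structure you already identified plus a one-line count.
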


\begin{proof}
Aiming for a contradiction, suppose that $L$ has both a maximal
partial transversal $T$ of length $2m+1$ and a transversal $T'$. Let
$A$ be the submatrix of $L$ consisting of the triples that do not
share a row or column with any element of $T$. Without loss of
generality, we may then assume that $L$ is partitioned into four
square blocks $A,B,C,D$, each of order $2m+1$ as shown:
\[
L=
\left(
\begin{array}{c|c}
  A&B  \\
  \hline
  C&D   
\end{array}
\right).
\]
As $T$ cannot be extended to a longer partial transversal, every
symbol in $A$ is contained in $T$.  It follows quickly that $A$, $B$,
$C$, and $D$ are subsquares of $L$, with $A$ and $D$ on a
symbol set $S_1$, and $B$ and $C$ on a symbol set $S_2$, where 
$S_1\cap S_2=\emptyset$.

The transversal $T'$ of $L$ contains, say,
$t$ triples from $A$. It follows that $T'$ contains $2m+1-t$ triples
from each of $B$ and $C$, and $t$ triples from $D$.  On the other
hand, $T'$ must contain each of the $2m+1$ symbols in $S_1$ exactly
once.  That means that $2t=2m+1$, an impossibility from which the
result follows.
\end{proof}

As mentioned in the introduction, no Latin square of order $4$ is
omniversal (or even near-omniversal).  There are Latin squares of
order 8 that are omniversal. For example, any Latin square obtained
from the Cayley table for the group $\Z_2^3$ by turning an
intercalate is omniversal. \fref{fig:panZ222} depicts maximal partial
transversals of such a Latin square of each possible length, shown by
the blue shaded entries.

\begin{figure}[htb]
\begin{center} 
\[
\begin{tikzpicture}
\matrix[square matrix](M){
\km2&1&\km0&3&4&5&6&7\\
1&0&3&2&5&4&7&6\\
\km0&3&\km2&1&6&7&4&5\\
3&2&1&0&7&6&5&4\\
4&5&6&7&0&1&\mk{2}&3\\
5&4&7&6&\mk{1}&0&3&2\\
6&7&4&5&2&\mk{3}&0&1\\
7&6&5&4&3&2&1&\mk{0}\\
};
\mvline{M}{5}
\mhline{M}{5}
\end{tikzpicture}
\quad 
\begin{tikzpicture}
\matrix[square matrix](M){
\mk{2}&1&0&3&4&5&6&7\\
1&0&3&2&5&\mk{4}&7&6\\
0&3&2&1&6&7&4&5\\
3&2&1&0&7&6&5&4\\
4&\mk{5}&6&7&0&1&2&3\\
5&4&7&6&\mk{1}&0&3&2\\
6&7&4&5&2&3&0&1\\
7&6&5&4&3&2&1&\mk{0}\\
};
\mvline{M}{5}
\mhline{M}{5}
\end{tikzpicture}
\quad 
\begin{tikzpicture}
\matrix[square matrix](M){
2&1&0&3&\mk{4}&5&6&7\\
1&0&3&\mk{2}&5&4&7&6\\
0&\mk{3}&2&1&6&7&4&5\\
3&2&\mk{1}&0&7&6&5&4\\
4&5&6&7&0&1&2&3\\
5&4&7&6&1&0&3&2\\
\mk{6}&7&4&5&2&3&0&1\\
7&6&5&4&3&2&1&\mk{0}\\
};
\mvline{M}{5}
\mhline{M}{5}
\end{tikzpicture}
\quad 
\]
\[
\begin{tikzpicture}
\matrix[square matrix](M){
2&1&0&3&4&5&6&7\\
1&0&3&\mk{2}&5&4&7&6\\
0&3&2&1&6&7&\mk{4}&5\\
3&2&1&0&7&\mk{6}&5&4\\
4&\mk{5}&6&7&0&1&2&3\\
5&4&\mk{7}&6&1&0&3&2\\
6&7&4&5&2&3&0&\mk{1}\\
7&6&5&4&\mk{3}&2&1&0\\
};
\mvline{M}{5}
\mhline{M}{5}
\end{tikzpicture}
\quad 
\begin{tikzpicture}
\matrix[square matrix](M){
2&\mk{1}&0&3&4&5&6&7\\
1&0&\mk{3}&2&5&4&7&6\\
0&3&2&1&6&7&4&\mk{5}\\
3&2&1&0&\mk{7}&6&5&4\\
\mk{4}&5&6&7&0&1&2&3\\
5&4&7&\mk{6}&1&0&3&2\\
6&7&4&5&2&3&\mk{0}&1\\
7&6&5&4&3&\mk{2}&1&0\\
};
\mvline{M}{5}
\mhline{M}{5}
\end{tikzpicture}
\]
\caption{Maximal partial transversals of an omniversal Latin square of order $8$.}
\label{fig:panZ222}
\end{center}
\end{figure}

We will generalise the construction in \fref{fig:panZ222} to construct
omniversal Latin squares of any order that is a multiple of 4 and at
least 8.  Let $n = 8m+4q$, where $q\in\{0,1\}$ and $m>0$.
Write $G= \Z_{2}^2\times \Z_{2m+q}$ multiplicatively 
as $\langle x,y,z \,:\, 1= x^{2m+q} =y^2=z^{2},xy=yx,xz=zx,yz=zy \rangle$.  
Let $H = \langle x \rangle$ be the subgroup of $G$ generated by $x$.
Throughout this section, we will make frequent (implicit) use of
the fact that $G$ is a disjoint union of cosets 
$G=H\cup yH \cup zH\cup yz H$.

Let $L_{8m+4q}$ be the Cayley table for
$\Z_{2}^2\times\Z_{2m+q}$.  
Let $L^*_{8m+4q}$ be the Latin square formed from $L_{8m+4q}$ by
turning the intercalate with cells $(1,1),(1,y),(y,1)$ and $(y,y)$,
i.e.,
\[
L^*_{8m+4q} =\big( L_{8m+4q}\setminus\{(1,1,1),(1,y,y),(y,1,y),(y,y,1) \} \big)
\cup \{ (1,1,y),(1,y,1),(y,1,1),(y,y,y)\}.
\]
The Latin square repeated in \fref{fig:panZ222} is (isotopic to) $L_8^*$,
with the turned intercalate marked by red crosshatching in the first copy.
We will show that $L^*_{8m+4q}$ is omniversal, but first note the
following theorem, which we will use in the proof.
It follows from the recent proof of the Hall-Paige conjecture
(see \cite{BCCSZ20,Eva18,Wan11}).

\begin{thm}\label{t:transgrp}
Let $G$ be a finite group and $L_G$ be the Cayley table for $G$. Then the 
following are equivalent.
\begin{itemize}
\item[(i)] $L_G$ can be decomposed into disjoint transversals.
\item[(ii)] The Sylow $2$-subgroups of $G$ are trivial or non-cyclic.
\end{itemize}
\end{thm}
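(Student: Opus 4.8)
The plan is to recognise \tref{t:transgrp} as essentially a repackaging of the Hall--Paige conjecture, which is now a theorem. Recall that a \emph{complete mapping} of a finite group $G$ is a bijection $\theta\colon G\to G$ for which $g\mapsto g\,\theta(g)$ is also a bijection. First I would spell out the standard dictionary between transversals of $L_G$ and complete mappings of $G$: selecting a transversal amounts to choosing, for each row $g$, a column $\theta(g)$ so that the columns $\theta(g)$ are all distinct and the symbols $g\,\theta(g)$ are all distinct; that is, $\theta$ is a bijection and $g\mapsto g\,\theta(g)$ is a bijection, i.e.\ a complete mapping. Conversely, every complete mapping $\theta$ yields the transversal $T_\theta=\{(g,\theta(g),g\,\theta(g)):g\in G\}$.

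For the implication (ii)$\Rightarrow$(i) I would argue as follows. Assuming the Sylow $2$-subgroups of $G$ are trivial or non-cyclic, the Hall--Paige theorem in its now-established form (see \cite{BCCSZ20,Eva18,Wan11}) supplies a complete mapping $\theta$ of $G$. A full transversal decomposition is then obtained by right-translation: for each $a\in G$ put $\theta_a(g)=\theta(g)\,a$. Each $\theta_a$ is again a bijection, and $g\mapsto g\,\theta_a(g)=\big(g\,\theta(g)\big)a$ is a bijection, so $\theta_a$ is a complete mapping with associated transversal $T_{\theta_a}$. For a fixed cell $(g,c)$ of $L_G$ one checks that $(g,c,gc)\in T_{\theta_a}$ exactly when $a=\theta(g)^{-1}c$; hence the $n=|G|$ transversals $\{T_{\theta_a}:a\in G\}$ are pairwise disjoint and together exhaust all $n^2$ cells of $L_G$, which is a decomposition of the required kind. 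For the converse (i)$\Rightarrow$(ii), any decomposition of $L_G$ into disjoint transversals contains a transversal, so $G$ has a complete mapping, and the classical (elementary) necessity direction of the Hall--Paige theorem then forces the Sylow $2$-subgroups of $G$ to be trivial or non-cyclic.

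The point to flag is that once the Hall--Paige conjecture is cited there is no genuinely hard step left: the only real content is the one-line right-translation trick, which upgrades a single transversal of a \emph{group} Cayley table to a complete transversal decomposition. I would take care to stress that this upgrade uses the group structure in an essential way --- right multiplication by $a$ simultaneously permutes the columns and the symbols of $L_G$ --- and fails for general Latin squares; indeed it amounts to the familiar fact that a group Cayley table has an orthogonal mate as soon as it has a single transversal.
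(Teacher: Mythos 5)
Your proposal is correct and matches the paper's intent exactly: the paper gives no proof of this theorem, simply noting that it "follows from the recent proof of the Hall--Paige conjecture" with citations, and your write-up supplies precisely the standard details behind that citation (the transversal/complete-mapping dictionary and the right-translation trick upgrading one transversal to a full decomposition). Both directions check out, so nothing further is needed.
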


Throughout the proof of our next theorem, we say that a set of elements $S$
{\em avoids} a row, column, or symbol $g$ if no element of $S$ is in row $g$,
column $g$ or has symbol $g$, respectively. Similarly, a set of triples
$S$ avoids a cell $(r,c)$ if $S$ contains no triple $(r,c,s)$ for any 
symbol $s$.

\begin{thm}\label{t:specpan4divn}
The Latin square $L^*_{8m+4q}$ is omniversal for all $m>0$ and $q \in \{0,1\}$.
\end{thm}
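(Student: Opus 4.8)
The goal is to produce, for each length $\ell$ with $\lceil n/2\rceil \le \ell \le n$ where $n = 8m+4q$, a maximal partial transversal of $L^*_{8m+4q}$ of length exactly $\ell$. The overall strategy is to split into two regimes: \textbf{large $\ell$} (say $\ell$ close to $n$), where I build a partial transversal that is forced to be maximal because it already uses up all but a controllable handful of symbols; and \textbf{small $\ell$} (roughly $n/2 \le \ell \le n/2 + O(1)$ or so), where maximality must be engineered by a clever combinatorial obstruction rather than by exhausting symbols. In the large regime I plan to work mostly inside the unmodified Cayley table $L_{8m+4q}$, using \tref{t:transgrp}: since the Sylow $2$-subgroup of $\Z_2^2\times\Z_{2m+q}$ is non-cyclic (it contains $\Z_2^2$), $L_{8m+4q}$ decomposes into $n$ disjoint transversals. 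Taking a union of a suitable sub-collection of entries from these transversals, chosen so that the entries not meeting any row or column of our chosen set still have all their symbols already represented, gives maximal partial transversals of all sufficiently large lengths. The single turned intercalate only affects four cells, so I handle near-transversals and transversals by a direct appeal to the Hall–Paige-type structure plus a small local fix around the intercalate — in particular $L^*$ does have a transversal (one can take a transversal of $L$ avoiding the four intercalate cells, or passing through them in a compatible way), which is needed since \tref{t:4n+2} shows this would be impossible were $n\equiv 2\pmod 4$; here $n\equiv 0\pmod 4$ so there is no obstruction.

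For the \textbf{small regime} the key idea is the coset structure $G = H \cup yH \cup zH \cup yzH$ with $H=\langle x\rangle$ of order $2m+q = n/4$. The plan is to exploit that a partial transversal confined to the "block structure" induced by these cosets can be made maximal at roughly half length: if a partial transversal $T$ occupies rows from $H\cup yH$, columns from $H\cup yH$, and has symbols filling exactly $H\cup yH$ (note $h\cdot h' \in H$ and more generally the product of two elements of $H\cup yH$ lies in $H\cup yH$ since $y^2=1$), then any extending triple $(r,c,s)$ would need $s\notin H\cup yH$, i.e. $s\in zH\cup yzH$; but then $rc=s$ forces exactly one of $r,c$ to lie outside $H\cup yH$. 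By additionally arranging that $T$ already meets every row and every column of $H\cup yH$, we block all such extensions — an extending triple cannot have both its row and its column inside $H\cup yH$, and if (say) its row is outside then $r\in zH\cup yzH$ is a row already... — more care is needed, so instead I would force $T$ to use, among the rows $H\cup yH$, a set that leaves no room: concretely, choose $T$ to be a transversal of the subsquare on rows $H\cup yH$, columns $H\cup yH$, which is a Latin subsquare of order $n/2$ on symbol set $H\cup yH$. Such a sub-transversal has length $n/2 = \lceil n/2\rceil$, uses every symbol of $H\cup yH$, and occupies every row and column of that half; any extending triple must then use a row or column from the other half and a symbol from $zH\cup yzH$, and one checks the product constraint kills it. To realise this I need the half-square (on $H\cup yH$, which is a coset of the index-$2$ subgroup $\langle x,y\rangle$... actually $H\cup yH = \langle x,y\rangle$ itself, an abelian group with non-cyclic Sylow $2$-subgroup when... ) to have a transversal, again by \tref{t:transgrp}, modulo checking whether the turned intercalate sits inside this half — it does, at cells $(1,1),(1,y),(y,1),(y,y)$, all with rows and columns in $\langle x,y\rangle$ — so I must verify the modified half-square still has a transversal, which is a small finite check for the base case plus a product construction for larger $m$.

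Finally I need to fill the gap between "exactly $n/2$" and "large $\ell$": for intermediate lengths $\ell = n/2 + k$ I plan to interpolate by starting from the small-regime maximal partial transversal and "trading" entries — removing some entries from inside the $(H\cup yH)$-half and adding $k$ entries reaching into the $zH\cup yzH$ rows/columns, maintaining the property that every not-yet-hit symbol either is impossible to place (wrong coset product) or collides with an occupied row/column. Equivalently, and perhaps more cleanly, I would run a single unified argument: build $T$ as a partial transversal whose row set $R$, column set $C$, symbol set $S$ satisfy $R,C \subseteq G$ with $|R|=|C|=|S|=\ell$ and such that for every triple $(r,c,rc)$ of $L^*$ with $r\notin R$, $c\notin C$, we have $rc\in S$; tracking this "saturation" condition through the coset decomposition reduces everything to a counting/design problem about choosing $R,C$ and a system of distinct representatives, which I expect to control for all $\ell$ in range by taking $R,C$ to be unions of pieces of cosets of $H$. \textbf{The main obstacle} I anticipate is precisely this last coordination step — simultaneously getting the length exactly right, getting a valid system of distinct representatives (a genuine partial transversal, not just a candidate row/column/symbol triple of sets), and verifying maximality — uniformly across the whole range $[n/2, n]$ and across both parities $q\in\{0,1\}$, while correctly accounting for the four perturbed cells of the turned intercalate in the few cases where they lie in the active region.
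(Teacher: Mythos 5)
Your outline correctly handles the two easy extremes, but the bulk of the theorem is missing. For the endpoints $\ell=n$ and $\ell=n-1$ your appeal to \tref{t:transgrp} plus a local fix at the turned intercalate matches the paper. For $\ell=n/2$ your idea (a transversal of an order-$n/2$ subsquare whose symbol set is closed under the coset product, so that any extending triple would need row, column and symbol all in $zH\cup yzH$, which is impossible since $(zH\cup yzH)(zH\cup yzH)=H\cup yH$) is sound in principle, but your choice of the half $H\cup yH$ runs into exactly the problem you trail off on: for $q=1$ that subsquare is isotopic to the Cayley table of $\Z_{2m+1}\times\Z_2\cong\Z_{4m+2}$, whose Sylow $2$-subgroup is cyclic, so the \emph{unmodified} half has no transversal at all; producing one in the intercalate-turned version is a genuine theorem (it is essentially Theorem~2.3 of \cite{Eva19}), not a ``small finite check.'' The paper sidesteps this by taking rows and columns from $zH\cup yzH$ (which also avoids the four perturbed cells) and only uses this device for $q=0$, folding the $q=1$ half-length into the general construction.

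The real gap is the entire intermediate range $n/2<\ell<n-1$, which is where almost all of the required lengths live and where almost all of the paper's proof is spent. You describe this only as a plan to ``trade'' entries from the $A$-block into the other three blocks while ``maintaining saturation,'' and you yourself flag the coordination of length, the partial-transversal property, and maximality as the main unresolved obstacle. That coordination is precisely the content of the proof: one needs an explicit base partial transversal $U_w$ of length $4m-1+q$ in block $A$, an explicit $j$-element subset $K$ avoiding certain rows, columns and symbols, three carefully chosen maps $\gamma_v,\rho_{vw},\sigma_w$ sending copies of $K$ into blocks $B$, $C$, $D$ so that no two images collide in a row, column or symbol (this requires checking, e.g., that $K$ contains no two rows differing by $x^m$), and then two or three additional triples, split into four cases according to the parities of $k$ and $q$, with a separate maximality verification in each case. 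Likewise your ``large regime'' claim that unions of entries drawn from the disjoint transversals of $L_{8m+4q}$ yield maximal partial transversals of all sufficiently large lengths is unsubstantiated (entries from distinct disjoint transversals freely share rows and columns, and maximality at a prescribed length does not follow); the paper in fact needs a bespoke explicit construction even for $\ell=n-2$ when $q=1$. As it stands the proposal is an outline whose hardest steps are acknowledged but not carried out, so it does not constitute a proof.
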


\begin{proof}
First we show that $L^*_{8m+4q}$ has a transversal and a maximal near-transversal.  Clearly, the Sylow $2$-subgroups of 
$G=\Z_{2}^2\times \Z_{2m+q}$ are non-cyclic.  Thus by
\tref{t:transgrp}, $L_{8m+4q}$ can be decomposed into disjoint
transversals.  Therefore, since $L_{8m+4q}$ has order at least 8, at
least one transversal $T$ of $L_{8m+4q}$ will avoid the cells
$(1,1)$, $(1,y)$, $(y,1)$ and $(y,y)$. Hence, $T$ is also a transversal of
$L^*_{8m+4q}$.  On the other hand, if $T'$ is a transversal of
$L_{8m+4q}$ which hits the cell $(1,1)$ but none of $(1,y)$, $(y,1)$ and
$(y,y)$, then $T'\setminus\{(1,1,1)\}$ must be a maximal near-transversal of
$L^*_{8m+4q}$.  As $L_{8m+4q}$ can be decomposed into disjoint
transversals and any pair of the triples $(1,1,1)$, $(1,y,y)$,
$(y,1,y)$ and $(y,y,1)$ share a row, column or symbol, such a
transversal $T'$ of $L_{8m+4q}$ exists and hence $L^*_{8m+4q}$ has a
maximal near-transversal.

Now we show that $L^*_{8m}$ has a maximal partial transversal of length
$4m$.  Consider the subsquare $S$ of $L_{8m}$ with rows and columns
indexed by $zH \cup yzH$.  As $S$ avoids the cells $(1,1)$, $(1,y)$, $(y,1)$
and $(y,y)$, we see that $S$ is also a subsquare of $L^*_{8m}$.  Moreover, 
$zH\cup yzH$ is the coset $z\left\langle x,y \right\rangle$ of $G$, so
$S$ is isotopic to the Cayley table for
$\Z_{2}\times\Z_{2m}$. Hence as
$\Z_{2}\times\Z_{2m}$ has a non-cyclic Sylow
$2$-group, $S$ has a transversal $T$, by \tref{t:transgrp}. It
follows that $T$ is a partial transversal of $L^*_{8m}$ of length
$4m$.  As the rows and columns of $T$ are $zH \cup yzH$ and $T$
contains every symbol in $H \cup yH$, we can extend $T$ to a longer
partial transversal only if there is $(r,c,s) \in L^*_{8m}$
such that $r,c \in H \cup yH$ and $s \in zH \cup yzH$.  However, every
cell in rows and columns $H \cup yH$ of $L^*_{8m}$ contains a symbol
in $H \cup yH$.  Thus, $T$ is a maximal partial transversal of length
$4m$.

We also show that $L^*_{8m+4}$ has a maximal partial transversal of
length $8m+2$.  Let
\begin{align*}
T = \big\{ (x^i,x^i,x^{2i}),(x^iz,x^iyz,x^{2i}y),(x^iyz,x^iy,x^{2i}z),(x^{i}y,x^{i+1}z,x^{2i+1}yz) :\, 0\le i\le 2m\big\}.
\end{align*}
Clearly $T$ is a partial transversal of $L^*_{8m+4}$ of length $8m$
that avoids rows in $\{1,z,yz,y \}$, columns in $\{1,yz,y,xz\}$ and
symbols in $\{1,y,z,xyz\}$. Therefore, $T \cup \{(1,1,y),(y,yz,z) \}$
is a maximal partial transversal of $L^*_{8m+4}$ of length $8m+2$,
since there is no element of $L^*_{8m+4}$ in row $r \in \{ z,yz\}$ and
column $c \in \{y,xz\}$ with a symbol in $\{ 1,xyz\}$.

Given what we have shown already, to prove the theorem it suffices to
show that $L^*_{8m+4q}$ has a maximal partial transversal of each
length $4m+2q+k$ for $1-q\leq k\leq 4m+q-2$ (the upper bound for $k$
can be checked by considering the two possible values for $q$).
Our constructions for
partial transversals of these lengths will be in terms of $k$ and
auxiliary parameters $w,v$ and $j$. Precise values for these
parameters will be specified subsequently. At this stage we merely
stipulate that $j$ is an integer satisfying $0\le j\le 2m-1$ and that
$\{v,w\} = \{y,z\}$, where $x,y,z$ are the generators for $G$.

Let $U_w$ be the partial transversal of $L^*_{8m+4q}$ (and
$L_{8m+4q}$) defined by
\begin{align*}
U_w = \{ (x^i,x^i,x^{2i}):\,1\le i\le m-1+q\} &\cup 
\{(wx^{i+1},wx^{i},x^{2i+1}):\, 0\le i\le m-1\}\cup \\
\{(x^{m+q+i},wx^{m+q+i},wx^{2i+q}):\, 0\le i\le m-1\} &\cup 
\{(wx^{m+1+q+i},x^{m+q+i},wx^{2i+q+1}) :\, 0\le i\le m-1\}.
\end{align*}
For $u \in G$ let $\rho_u$ and $\gamma_u$ be the functions defined by 
\[
\rho_u(r,c,rc) = (ur,c,urc) \quad \textrm{ and } \quad 
\gamma_u(r,c,rc) = (r,cu,rcu) 
\]
for all $(r,c,rc) \in L^*_{8m+4q}$.
Also, for $u \in G\setminus H$  let $\sigma_u$ be the function defined by 
\[
\sigma_u(r,c,rc) = 
\begin{cases}
  (x^{-m}yzr,cx^{m}yz,rc) & \textrm{ if } r,c \in H\\
  (x^{m}yzr,cx^{-m}yz,rc) & \textrm{ if } r,c \in uH\\
\end{cases}
\]
for any $(r,c,rc) \in L^*_{8m+4q}$ such that $r,c \in H \cup uH$ and $rc \in H$.

By permuting rows and columns, $L_{8m+4q}$ can be written in the form 
\[
\left(
\begin{array}{c|c}
  A&B  \\
  \hline
  C&D   
\end{array}
\right),
\]
where the rows and columns of $A$ are indexed by $H \cup wH$, while
the rows and columns of $D$ are indexed by $vH \cup vwH$. The rows
and columns of $B$ and $C$ are indexed accordingly.  One can easily
check that $A$ and $D$ contain the symbols $H \cup wH$, while
$B$ and $C$ contain the symbols $vH \cup vwH$.

Let $K$ be a $j$-subset of $U_w$ whose symbols are elements of 
$H\setminus\{x^{2m}\}$. Clearly $U_w$ is contained in block $A$. 
It follows that $\gamma_v(K)$, $\rho_{vw}(K)$
and $\sigma_w(K)$ are in blocks $B$, $C$ and $D$ respectively.
We claim that 
\[
U=(U_w\setminus K) \cup \gamma_v(K) \cup \rho_{vw}(K) \cup  \sigma_w(K)
\]
is a partial transversal of $L^*_{8m+4q}$ (and $L_{8m+4q}$) of length
$4m-1+q+2j$.  Clearly $(U_w\setminus K)$, $\gamma_v(K)$, $\rho_{vw}(K)$ and
$\sigma_w(K)$ are each partial transversals.

If two elements of $U$ share a row, then they are from $(U_w\setminus K)$
and $\gamma_v(K)$, or from $\rho_{vw}(K)$ and $ \sigma_w(K)$.  The
former is not possible by definition of $\gamma_v$ since the rows of
elements of $\gamma_v(K)$ are exactly those of the elements of
$K$. The latter occurs only if there are two elements in $K$ that are
in rows $r$ and $r'$, such that $r =r'x^{m}$, yet no such elements of
$K$ exist.  If two elements of $U$ share a column, then they are from
$(U_w\setminus K) $ and $\rho_{vw}(K)$, or from $\gamma_v(K)$ and
$\sigma_w(K)$.  The former is not possible as the columns of elements
of $\gamma_v(K)$ are exactly those of the elements of $K$. The
latter occurs only if there are two elements in $K$ that are in
columns $c$ and $c'$, such that $c =c'wx^{m}$, yet no such elements of
$K$ exist.  If two elements of $U$ contain the same symbol, then they
are from $(U_w\setminus K) $ and $\sigma_w(K)$, or from $\gamma_v(K)$ and
$\rho_{vw}(K)$.  The former is impossible as the symbols appearing in
$\sigma_v(K)$ are exactly those appearing in $K$ and the latter is
not possible as the elements of $\gamma_v(K)$ only contain symbols
in $vH$, while the elements of $\rho_{vw}(K)$ contain only symbols in
$vwH$.  Therefore, $U$ is indeed a partial transversal of
$L^*_{8m+4q}$.  Finally, $U$ avoids the cells $(1,1)$, $(1,y)$, $(y,1)$
and $(y,y)$ and has length $4m-1+q+2j$.  We have thus proven our
claim about $U$.

Suppose that $j=\lfloor\frac{k+q-1}{2} \rfloor$.
Let $w = y$ if $k$ is even and $w=z$ if $k$ is odd. 
We extend $U$ as defined above to give
\[
T_{k,q} = 
\begin{cases}
U\cup\{(1,1,y),(x^{m}yz,x^{m}yz,1)\} & \textrm{ if $k$ is odd and } q=0,\\
U\cup\{(1,1,y),(x^{m+1}z,x^{m}z,1),(yz,y,z)\} & \textrm{ if $k$ is odd and }q=1,\\
U\cup\{(1,z,z),(x^myz,x^{m}yz,1),(yz,1,yz)\} & \textrm{ if $k$ is even and }q=0,\\
U\cup\{(1,1,y),(x^{m+1}y,x^{m}y,1)\} & \textrm{ if $k$ is even and }q=1.
\end{cases}
\]
Now we show that $T_{k,q}$ is a maximal partial transversal of length
$4m+2q+k$, as desired.  
It is easy to see that $T_{k,q}$ contains $4m+2q+k$ triples for each
case of $k$ and $q$. We have already shown that $U$ is a partial 
transversal; we now argue that the extra triples added to $U$ do not
share a row, column or symbol with any triple in $U$.
It is easy to see that $U_w$ avoids row, column and
symbol $1$ when $q=0$ and rows $1$ and $wx^{m+1}$, columns $1$ and
$wx^m$ and symbols $1$ and $w$ when $q=1$.  Let $g \in H \cup w H$.
Note that $U$ contains an element that is in row $g$, column $g$ or has
symbol $g$ if and only if an element of $U_w$ is in row $g$, column
$g$ or has symbol $g$ respectively. It follows, for any 
$g \in H\cup w H$, that $T_{k,q}$ contains
exactly one element in row $g$, one element in column $g$ and one
element that has symbol $g$.
By construction, $K$ avoids rows in
$\{1,x^m,x^{-m},wx^{m+1}\}$ columns in $\{1,x^m,wx^m\}$ and symbols
in $\{1,x^{2m},w\}$. It follows that 
$\gamma_v(K) \cup\rho_{vw}(K) \cup \sigma_w(K)$ 
avoids rows $vw$ and $vwx^{-m}$, columns $v$ and $vwx^m$ and symbols
$v$ and $vw$.  Thus $T_{k,q}$ is a partial transversal for each case
of $k$ and $q$.

Lastly, we show that $T_{k,q}$ is indeed maximal.  For any 
$g \in H\cup w H$, we argued above that $T_{k,q}$ contains elements 
in row $g$, column $g$ and with symbol $g$.  Thus, an element 
$(r,c,s) \in L^*_{8m+4q}$ could be added to $T_{k,q}$ to make a larger partial
transversal only if $r,c,s \in vH \cup vwH$.  For any $(r,c,s) \in
L^*_{8m+4q}$ with $r,c \in vH \cup vwH$, we see that $s=rc \notin vH
\cup vwH$, except possibly when $(r,c)$ is one of $(1,1)$, $(1,y)$,
$(y,1)$ or $(y,y)$. Even in that case, $r,c,s \in vH \cup vwH$ only if
$v=y$ and $(r,c,s)=(y,y,y)$.  By definition, $v=y$ only if $k$ is odd,
yet $(1,1,y) \in T_{k,q}$ when $k$ is odd.  Hence $T_{k,q}$ is
maximal, as claimed.
\end{proof}

We can now prove \tref{t:panall}.

\begin{proof}[Proof of \tref{t:panall}]
By Theorems~\ref{t:panoddn} and \ref{t:specpan4divn}, there exists
omniversal Latin squares of order $n$ when $n \neq 3$ is odd and
$n\geq 8$ is divisible by 4, respectively.  By \tref{t:4n+2}, no
omniversal Latin square of order 2 modulo 4 can exist.  Finally, one
can easily check that no omniversal Latin squares of orders 3 or 4
exist, concluding the proof.
\end{proof}

\section{Near-omniversal Latin squares of order 2 modulo 4}\label{sec:nearpan2mod4}

Given \tref{t:4n+2}, it is natural to ask whether any Latin square of
order $2$ modulo 4 can be near-omniversal. The present section is
devoted to proving this is indeed the case, namely, proving
\tref{t:nearpan2mod4}.  Any Latin square of order $2$ is clearly
near-omniversal. In \fref{fig:nearpanZ6} we present a near-omniversal
Latin square of order $6$ that is isotopic to the Cayley table of
$\Z_6$ after turning an intercalate. Maximal partial
transversals of this square of lengths $4$, $5$, and $6$ are shown by
the blue shading (again, the turned intercalate is indicated by red
crosshatching).

\begin{figure}[htb]
\begin{center} 
\[
\begin{tikzpicture}
  \matrix[square matrix](M){
3&2&4&\mk{1}&0&5\\
2&4&0&3&\mk{5}&1\\
4&0&2&5&1&\mk{3}\\
1&3&5&2&4&0\\
\mk{0}&5&1&4&3&2\\
5&1&3&0&2&4\\
  };
  \mvline{M}{4}
  \mhline{M}{4}
\end{tikzpicture}
\quad
\begin{tikzpicture}
  \matrix[square matrix](M){
\km3&\mk{2}&4&1&\km0&5\\
2&4&0&3&\mk{5}&1\\
4&0&2&5&1&\mk{3}\\
\mk{1}&3&5&2&4&0\\
\km0&5&1&\mk{4}&\km3&2\\
5&1&3&0&2&4\\
  };
\mvline{M}{4}
\mhline{M}{4}
\end{tikzpicture}
\quad
\begin{tikzpicture}
  \matrix[square matrix](M){
3&2&4&1&\mk{0}&5\\
2&4&0&\mk{3}&5&1\\
4&0&\mk{2}&5&1&3\\
\mk{1}&3&5&2&4&0\\
0&\mk{5}&1&4&3&2\\
5&1&3&0&2&\mk{4}\\
  };
\mvline{M}{4}
\mhline{M}{4}
\end{tikzpicture}
\]
\caption{Maximal partial transversals in a near-omniversal Latin square of order $6$.}
\label{fig:nearpanZ6}
\end{center}
\end{figure}

Let $n=4m+2$. The standard Cayley table of the group
$\Z_n$ has the symbol $i+j\pmod{n}$ in cell $(i,j)$ of the table,
for all $i,j\in \{0,1,\dots,n-1\}$. An isotopic form of the Cayley table,
which highlights the subgroup of index $2$, is the Latin square
\[
\left(
\begin{array}{c|c}
  A&B  \\
  \hline
  C&D   
\end{array}
\right),
\]
where $A$, $B$, $C$, and $D$ are Latin squares of order $2m+1$ 
with rows and columns indexed by 
$\Z_{2m+1}$ and entries 
\[
A=(a_{ij})=2(i+j)\phantom{\;+1}\pmod{4m+2},
\] 
\[
B=(b_{ij})=2(i+j)+1\pmod{4m+2},
\] 
\[
C=(c_{ij})=2(i+j)+1\pmod{4m+2},
\]
 and 
 \[
 D=(d_{ij})=2(i+j)+2\pmod{4m+2},
 \]
for all $i,j \in \{0,1,\dots,2m\}$. 
Let $M_{4m+2}^*$ denote the Latin square obtained from $M_{4m+2}$ by
changing four entries, i.e., turning an intercalate, by setting
 \[
 a_{00}=2m+1=d_{mm}\quad \text{and}\quad b_{0m}=0=c_{m0}.
 \]
The Latin square $M_6^*$ is depicted in \fref{fig:nearpanZ6}.  We will
prove that $M_{4m+2}^*$ is near-omniversal, but first we require the
following two lemmas.

\begin{lem}\label{l:entry0}
  Let $a \in \{0, \ldots,4m+1\}$.  Then the equation
  $4i+a \equiv 0\pmod{4m+2}$ has a solution in $\{1,\ldots,m\}$ if and only if
  $a\equiv 2 \pmod 4$, in which case there is a unique solution
  $i=m-\frac{a-2}{4}$.
\end{lem}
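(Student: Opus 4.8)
The plan is to avoid abstract congruence manipulation and instead pin down the exact value of $4i+a$ by a range argument. First I would note that if $i$ ranges over $\{1,\dots,m\}$ then $4i$ ranges over $\{4,8,\dots,4m\}$, and since $a\in\{0,\dots,4m+1\}$ the sum $4i+a$ always lies in the interval from $4$ to $8m+1$. As $4m+2>0$ while $2(4m+2)=8m+4>8m+1$, the only multiple of $4m+2$ in this interval is $4m+2$ itself. Consequently, for $i$ in the prescribed range, the congruence $4i+a\equiv 0\pmod{4m+2}$ holds if and only if the exact equation $4i+a=4m+2$ holds.

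Then I would simply solve this equation: $4i+a=4m+2$ gives $i=m-\frac{a-2}{4}$, which is an integer precisely when $4\mid a-2$, i.e.\ when $a\equiv 2\pmod 4$. This yields both the claimed equivalence and the claimed formula for $i$, and uniqueness is automatic since the linear equation determines $i$. To finish I would verify that, when $a\equiv 2\pmod 4$, the value $i=m-\frac{a-2}{4}$ genuinely lies in $\{1,\dots,m\}$: writing $a=4t+2$, the bounds $0\le a\le 4m+1$ force $0\le t\le m-1$, so $i=m-t\in\{1,\dots,m\}$. (If $m=0$ the statement is vacuously true, both sides of the equivalence being false.)

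I do not anticipate a real obstacle; the lemma is elementary. The only point requiring care is the range bookkeeping — checking that the largest possible value of $4i+a$ is strictly less than $2(4m+2)$, so that $4m+2$ is the unique relevant multiple, and correctly translating $0\le a\le 4m+1$ into $0\le t\le m-1$ for the final containment check.
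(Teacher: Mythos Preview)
Your proposal is correct and follows essentially the same approach as the paper: both arguments use a range bound to convert the congruence $4i+a\equiv 0\pmod{4m+2}$ into the exact equation $4i+a=4m+2$ (the paper phrases this as $4i=4m+2-a$ via $0<4i<4m+2$), and then read off the divisibility condition and the formula for $i$. Your write-up is slightly more explicit about the converse range check and the degenerate case $m=0$, but the method is the same.
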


\begin{proof}
If there is an $i \in \{1,\ldots,m\}$ such that $4i \equiv-a
\pmod{4m+2}$, then $4i = 4m+2-a$, as $0 < 4i <4m+2$. Consequently, $a$
must be $2$ modulo $4$ and $i= m-\frac{a-2}{4}$.  Conversely, if
$a\equiv 2 \pmod{4}$, then $i= m-\frac{a-2}{4} \in \{1,\ldots,m\}$ and
satisfies $4i+a \equiv 0 \pmod{4m+2}$.
\end{proof}

\begin{lem}\label{l:entrysame}
The equation $4i+2 \equiv 4j \pmod{4m+2}$ has no solutions $(i,j)$
that satisfy $|i-j| \leq m-1$.
\end{lem}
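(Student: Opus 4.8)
The plan is to collapse the two variables into the single quantity $d=j-i$ and then show the resulting congruence cannot be satisfied by any $d$ with $|d|\le m-1$. Writing the hypothesis $4i+2\equiv 4j\pmod{4m+2}$ as $4d\equiv 2\pmod{4m+2}$, the whole lemma reduces to the claim that every integer solution of this congruence has $|d|\ge m$.

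First I would divide the congruence \emph{and its modulus} by $2$: since $4m+2=2(2m+1)$, the condition $4d\equiv 2\pmod{4m+2}$ is equivalent to $2d\equiv 1\pmod{2m+1}$. Because $\gcd(2,2m+1)=1$, this pins down $d$ uniquely modulo $2m+1$; multiplying through by the inverse of $2$, namely $m+1$ (indeed $2(m+1)=2m+2\equiv 1\pmod{2m+1}$), gives $d\equiv m+1\pmod{2m+1}$.

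Next I would identify the integer representatives of this class that lie closest to $0$. The class $m+1\pmod{2m+1}$ consists of the integers $\dots,\,-m,\;m+1,\;3m+2,\,\dots$, so the two representatives nearest $0$ are $d=m+1$ and $d=-m$, and hence every solution satisfies $|d|\ge m$. Since a pair $(i,j)$ with $|i-j|\le m-1$ would give $|d|\le m-1<m$, no such pair solves the congruence, which is exactly the assertion of the lemma.

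The argument is short and I do not anticipate a genuine obstacle; the only points that need a little care are performing the reduction from modulus $4m+2$ to modulus $2m+1$ correctly (one must halve the modulus as well as both sides), and the sign bookkeeping, so that both the positive representative $m+1$ and the negative representative $-m$ of the solution class are accounted for when bounding $|d|$ from below.
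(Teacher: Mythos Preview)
Your proof is correct. Both you and the paper begin identically, by collapsing to the single variable $d=j-i$ (the paper uses $i-j$), but you finish differently. The paper simply observes that if $|i-j|\le m-1$ then $4(i-j)+2$ lies strictly between $-(4m+2)$ and $4m+2$, so the congruence forces $4(i-j)+2=0$, which is impossible modulo~$4$. You instead halve the modulus to $2m+1$, solve the congruence explicitly to get $d\equiv m+1\pmod{2m+1}$, and read off that the nearest representatives to zero are $-m$ and $m+1$. Your route gives slightly more information (the full solution set), while the paper's range-plus-parity argument is marginally shorter and avoids computing an inverse; both are entirely elementary and of comparable length.
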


\begin{proof}
The solutions to 
$4i+2 \equiv 4j \pmod{4m+2}$ are any $i$ and $j$ such that 
$4(i-j)+2\equiv 0 \pmod{4m+2}$. If $|i-j| \leq m-1$, then
\[
-4m+6 \leq  4(i-j)+2 \leq 4m-2.
\]
Therefore, $4(i-j)+2\equiv 0 \pmod{4m+2}$ only if $4(i-j)+2 = 0$, yet
this is impossible, as the left hand side is 2 modulo 4 while the
right hand side is 0 modulo 4.
\end{proof}

We will now prove that $M_{4m+2}^*$ is near-omniversal, thus proving
\tref{t:nearpan2mod4}.

\begin{thm}\label{t:Near4n+2}
  The Latin square $M_{4m+2}^*$ is near-omniversal for all $m \geq 0$. 
  For $m\ge1$, it does not possess a maximal partial transversal of 
  length $2m+1$.
\end{thm}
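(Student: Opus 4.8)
The plan is to split the theorem into two parts. The second assertion — that $M^*_{4m+2}$ has no maximal partial transversal of length $2m+1$ for $m\ge1$ — should follow almost immediately from \tref{t:4n+2} once we exhibit a transversal of $M^*_{4m+2}$. Indeed, $M^*_{4m+2}$ has order $4m+2$, so by \tref{t:4n+2} it cannot simultaneously possess a transversal and a maximal partial transversal of length $2m+1$. So the first order of business is to construct an explicit transversal of $M^*_{4m+2}$. A natural candidate is a diagonal-type transversal in the original cyclic square $M_{4m+2}$ that happens to avoid all four cells $(0,0)$, $(0,m)$, $(m,0)$, $(m,m)$ touched by the turned intercalate; such a transversal survives the turning and is a transversal of $M^*_{4m+2}$. (If no such transversal exists, one instead uses a transversal hitting exactly one of the four intercalate cells in a compatible way, exactly as in the proof of \tref{t:specpan4divn}; I expect the direct construction to work since $\Z_{4m+2}$ does have transversals — e.g.\ the broken diagonal — and we have four rows, four columns and a lot of freedom.)

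The bulk of the work is the first assertion: that $M^*_{4m+2}$ has a maximal partial transversal of \emph{every} length in $\{\lceil(4m+2)/2\rceil,\dots,4m+2\}=\{2m+1,\dots,4m+2\}$ \emph{except} $2m+1$; that is, of every length $\ell$ with $2m+2\le\ell\le 4m+2$, plus length $2m+1$ is the omitted one. The strategy mirrors \tref{t:specpan4divn}: produce for each target length a partial transversal built from a flexible ``core'' inside one of the blocks $A,B,C,D$ together with a handful of bespoke extra triples, and then verify maximality by a coset/parity obstruction. Here Lemmas~\ref{l:entry0} and~\ref{l:entrysame} are the tailored tools: \lref{l:entry0} controls exactly when an entry equal to a prescribed symbol (notably $2m+1$, the ``moved'' symbol, or $0$) occurs along a short diagonal of a block, and \lref{l:entrysame} guarantees that two nearby broken-diagonal entries of $A$ and $D$ never collide in symbol, which is what lets a partial transversal sitting in $A$ be extended into $D$ without symbol clashes. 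Concretely I would take a partial transversal consisting of broken-diagonal cells of $A$ (symbols $2(i+j)$), push a chosen $j$-subset of them into $B$, $C$, $D$ via the coset translations analogous to $\gamma,\rho,\sigma$, getting length $2m+1+$ (something linear in $j$), and then add one or two correcting triples using the turned-intercalate cells $(0,0,2m+1)$ or $(0,m,0)$ to fine-tune the parity and to force maximality, exactly the role that $(1,1,y)$ played before.

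For maximality, the key point is the index-$2$ subgroup structure: $A,D$ carry the even symbols and $B,C$ the odd symbols, so once the partial transversal meets every even-indexed row, column and symbol, any extending triple $(r,c,s)$ must live entirely in the ``odd'' part (rows/columns/symbols indexed by the nontrivial coset), and there $s=2(r+c)+1$ forces a contradiction unless $(r,c)$ is one of the four intercalate cells — which the chosen correcting triples are designed to block, just as in \tref{t:specpan4divn}. The omission of length $2m+1$ is then not an accident we must separately arrange: it is forced by \tref{t:4n+2} together with the transversal we built, so the constructions above only need to cover $2m+2\le\ell\le4m+2$, and we should double-check the small case $m=0$ (order $2$) and the displayed case $m=1$ (order $6$, \fref{fig:nearpanZ6}) by hand as sanity checks, with $m=0$ being trivial since every length in $\{1,2\}$ other than $1$ — namely just $2$ — is a transversal.

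The main obstacle I anticipate is bookkeeping: pinning down the exact sets of rows, columns and symbols that the core partial transversal $U$ avoids (these depend on the parity of the target length through the choice of which coset translate to use and how many elements $j$ to move), and then checking that the one or two correcting triples indeed avoid $U$ \emph{and} cannot themselves be part of a further extension. This is precisely where Lemmas~\ref{l:entry0} and~\ref{l:entrysame} have to be invoked with the right arguments — \lref{l:entry0} to know whether symbol $2m+1$ or symbol $0$ is hit by the diagonal of a given block, and \lref{l:entrysame} to rule out symbol collisions between the $A$-part and the $D$-part — and getting the case split on $\ell \bmod 2$ to line up with the two correcting-triple templates is the delicate step, entirely analogous to (but a little simpler than) the four-way $k,q$ case analysis in the proof of \tref{t:specpan4divn}.
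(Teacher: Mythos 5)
There is a genuine gap, and it sits at the very foundation of your argument: you assert that $\Z_{4m+2}$ ``does have transversals --- e.g.\ the broken diagonal'' and propose to obtain a transversal of $M^*_{4m+2}$ by finding a transversal of $M_{4m+2}$ that avoids the four intercalate cells. This is false. The Cayley table of a cyclic group of even order has \emph{no} transversal whatsoever (its Sylow $2$-subgroup is nontrivial and cyclic, so \tref{t:transgrp} applies; equivalently, the Delta lemma forces the sum of the symbols of any transversal to be $n(n-1)\equiv 0$, whereas the symbols must sum to $n(n-1)/2\equiv n/2\not\equiv 0 \pmod n$). So neither your primary plan (a transversal of $M_{4m+2}$ missing all four cells) nor your fallback (a transversal of $M_{4m+2}$ hitting exactly one of them) can succeed: there is nothing to start from. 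The whole point of turning the intercalate is to \emph{create} a transversal where none existed; establishing that $M^*_{4m+2}$ has one is a genuinely nontrivial step, which the paper discharges by citing Theorem~2.3 of~\cite{Eva19} rather than by the kind of perturbation argument you describe (that argument is appropriate for $L^*_{8m+4q}$ in \tref{t:specpan4divn}, where the underlying group \emph{does} satisfy Hall--Paige, but not here). Since both the length-$(4m+2)$ case and the exclusion of length $2m+1$ via \tref{t:4n+2} hang on this transversal, the gap infects both assertions of the theorem.

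The remainder of your outline is broadly consistent with the paper's strategy --- explicit maximal partial transversals of each length from $2m+2$ to $4m+1$, with Lemmas~\ref{l:entry0} and~\ref{l:entrysame} controlling symbol collisions and maximality enforced by the index-$2$ coset structure ($B\cup C$ carrying only odd symbols). But note that the paper's constructions here are \emph{not} obtained by the $\gamma,\rho,\sigma$ coset-translation mechanism of \tref{t:specpan4divn}; instead they first cover all odd symbols with a diagonal of $B\cup C$ and then adjoin a carefully chosen partial transversal of prescribed submatrices $A'$ and $D'$, with a separate case analysis on the parity of $m$ (not just of $\ell$). Your sketch leaves all of these constructions unspecified, so even setting aside the transversal issue, the bulk of the verification remains to be done.
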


\begin{proof}
The theorem is clearly true for $m=0$. Hence, we will assume that $m\ge 1$ and 
construct a maximal partial transversal of each valid length except $2m+1$ in $M_{4m+2}^*$.
Note that in the case when $m=1$, these maximal partial transversals are exactly those depicted in \fref{fig:nearpanZ6}.
Let $H=\langle 2\rangle$ be the subgroup of $\Z_{4m+2}$ of
index $2$; thus $\Z_{4m+2}=H\cup (H+1)$.  

First we construct a maximal partial transversal of length $2m+2k+3$,
for each $k\in\{0,1,\dots,m-1\}$.  Let $T_{A}$ consist of the triples
$(i,i+k+1,4i+2k+2)$, for $0 \leq i\leq k$, from $A$ and $T_D$ consist of the
triples $(j+k+1,j,4j+2k+4)$, for $0 \leq j \leq k$, from $D$. 
It follows from \lref{l:entrysame} that $4i \equiv 4j+2 \pmod{4m+2}$
has no solutions for $i,j\in\{0,\ldots,k \}$ and hence the
symbols in $T_A$ and $T_D$ are distinct.  Let $T_C$ consist of the
triples $(i,i,4i+1)$ for $0 \leq i \leq k $ from $C$ and $T_{B}$ consist of
the triples $(j,j,4j+1)$ for $k+1 \leq j \leq 2m $ from $B$; the symbols in the
triples in $T_B \cup T_C$ are precisely the elements of $H+1$.  Thus, as
the only triples that are not in a row or column of a triple in
$T=T_{A}\cup T_B \cup T_C \cup T_D$ are from $C$ and no triple from $C$
has a symbol from $H \setminus \{0\}$ (and even the symbol $0$ is
unavailable given that it only occurs in the first column of $C$ and
$T_C$ includes a triple in that column), it follows that $T$ cannot be
extended to a longer partial transversal.  Therefore, $T$ is a maximal
partial transversal of length $2m+2k+3$.

Next, we construct a maximal partial transversal of length $2m+2k+2$
for each $k\in\{0,1,\dots,m-1\}$.  Let $T_C$ consist of the triples
$(i,i,4i+1)$ for $1 \leq i \leq k$ and $(m,0,0)$ from $C$ and $T_B$
consist of the triples $(j,j,4j+1)$ for $k+1 \leq j \leq 2m$ and
$(0,0,1)$ from $B$. The symbols in the triples of $T_B \cup T_C$
consist of precisely the elements of $\{0\}\cup(H+1)$ and $T_B \cup T_C$ 
has length $2m+2$. As no triple of $B \cup C$ contains a symbol
in $H\setminus \{0\}$, we see that $T_B \cup T_C$ can only be extended
to a longer partial transversal by adding triples of the submatrix
$A'$ of $A$ consisting of the triples $(i,j,2i+2j)$ such that 
$i\in\{1,\dots,k\}$ and $j\in\{k+1,\dots,2m\}$; and the submatrix $D'$
of $D$ consisting of the triples $(i,j,2i+2j+2)$ such that
$i\in\left(\{k+1,\dots,2m\}\cup\{0\}\right)\setminus\{m\}$ and 
$j\in\{1,\dots,k\}$.  There are only $k$ rows in $A'$ and $k$ columns in
$D'$, so at most $2k$ triples of $A' \cup D'$ can form a partial 
transversal. Hence, if $T'$ is a partial transversal of length
$2k$ of $A' \cup D'$ with symbols in $H \setminus \{0\}$, then
$T_B\cup T_C \cup T'$ is a maximal partial transversal of length
$2m+2k+2$.  We proceed by constructing such a partial transversal $T'$,
considering the cases when $m$ is even and odd separately.

First suppose that $m$ is even.  Let $T_{A'}$ consist of the triples
$(i,i+m,4i+2m)$, for $1 \leq i \leq k$, from $A'$.  Let $T_{D'}$ consist of
the triples $(j+m,j,4j+2m+2)$ for $1 \leq j \leq \min\{k,\frac{m}{2}-1\}$ from
$D'$ and let $T'_{D'}$ consist of the triples $(j+m+2,j,4j+2m+6)$ for
$\frac{m}{2} \leq j \leq k$ from $D'$.  Note that $T'_{D'}$ is empty
if $k \leq \frac{m}{2}-1$ and that the triple $(0,m-1,2m)$ is in
$T'_{D'}$ when $k=m-1$.
We show that no symbol in $T'=T_{A'}\cup T_{D'}\cup T'_{D'}$ is $0$.  
By \lref{l:entry0}, the
symbols in $T_{A'}$ cannot be zero, given that $m$ is even.  
Similarly, \lref{l:entry0} shows that 
$4j+2m+2 = 0$ for $j \in \{1,\ldots,k+1\}$ 
only when $j=\frac{m}{2}$. It follows that no symbol in $T_{D'} \cup T'_{D'}$
is 0.  Finally we show that $T'$ is a
partial transversal.  A symbol in $T_{A'}$ is the same as
one in $T_{D'}\cup T'_{D'}$ only if $4i \equiv 4j+2$ for some 
$i\in\{1,\ldots,k\}$ and $j \in \{1,\ldots,k+1\}$, but, by
\lref{l:entrysame}, there is no such $i$ and $j$.  Hence, the
symbols in the triples of $T_{A'}$ and $T_{D'}\cup T'_{D'}$ are
distinct, and so $T'$ is a partial transversal of length $2k$ of
$A'\cup D'$ with symbols only in $H\setminus \{0\}$.  Thus,
$M_{4m+2}^*$ has a maximal partial transversal of length $2m+2k+2$
when $m$ is even.

Now suppose that $m$ is odd.  Let $T_{A'}$ consist of the triples
$(i,i+m,4i+2m)$ for $1 \leq i \leq \min\{k,\frac{m-1}{2} \}$ and $T'_{A'}$
consist of the triples $(j,j+m+1,4j+2m+2)$ for $\frac{m+1}{2} \leq j \leq k$.
Also let $T_{D'}$ consist of the triples $(j+m,j,4j+2m+2)$ for 
$1\leq j\leq\min\{k,\frac{m-1}{2}\}$ and let $T'_{D'}$ consist of the triples
$(i+m+1,i,4i+2m+2)$ for $\frac{m+1}{2} \leq i \leq k$.  Note that $T'_{A'}$
and $T'_{D'}$ are empty if $k \leq \frac{m-1}{2}$.  First we show that
no symbol in $T'=T_{A'}\cup T'_{A} \cup T_{D'} \cup T'_{D'}$ is $0$.
\lref{l:entry0} shows that $4i+2m \not\equiv 0$ for any 
$i\in\{1,\ldots,k+1\} \setminus \{\frac{m+1}{2} \}$, so no symbol
in $T_{A'} \cup T'_{D'}$ is $0$.  
\lref{l:entry0} also shows that the symbols in $T'_{A'} \cup T_{D'}$ 
cannot be $0$, since $m$ is odd.  
Finally we show that $T'$ is a partial
transversal.  A symbol in $T'_{A'} \cup T_{D'}$ is the
same as one in $T_{A'} \cup T'_{D'}$ only if 
$4i=4j+2$ for some $i\in\{1,\ldots,k+1\}$ and $j\in\{1,\ldots,k\}$, 
yet by \lref{l:entrysame} no such $i$ and $j$ exist.  Hence,
$T'$ is a partial transversal of length $2k$
with symbols only in $H \setminus \{0\}$.  Therefore $M_{4m+2}^*$ has
a maximal partial transversal of length $2m+2k+2$ when $m$ is odd.

Thus far, we have established the existence of maximal partial
transversals of $M_{4m+2}^*$ of lengths $2m+2,\dots,4m+1$. The
existence of a maximal partial transversal of length $4m+2$, i.e., a
transversal, follows from Theorem~{2.3} in~\cite{Eva19}. Finally, by
\tref{t:4n+2}, $M_{4m+2}^*$ cannot have a maximal partial transversal
of length $2m+1$ as it has a transversal.  This completes the proof.
\end{proof}

\section{Extending partial subsquares}\label{sec:subrec}

A result from \cite{Eva19}, which was useful when constructing
omniversal Latin squares of odd order, showed that if a Cayley table
of a finite cyclic group contains all but one row of a Latin square (of order
at least~3) then it contains the whole of that square as a
subsquare. We will need a more general version of this result in our proof
of \tref{t:nogrptran}. However, we think the issue is interesting
enough to treat in a separate section. Informally, how much of a Latin
square do we need to see inside a Cayley table before we can be sure
the whole square is there?  One possible approach to making that
question concrete is to consider the Hamming distance between Cayley
tables (see e.g.~\cite{Dra92,ILY12,VW12}). However, we need a
different approach, because the application we have in mind is when we
encounter a submatrix which is surprisingly pure, in that it contains
few symbols for its size. Under what conditions can we be sure such a
submatrix is inside a subsquare on the same set of symbols?  In
particular, we will consider the following question.  If $R$ is a
submatrix of a Cayley table and $R$ contains $m$ symbols, then how
large does $R$ need to be (in terms of $m$) to guarantee that $R$ can
be extended to a subsquare of $L$ which contains only the $m$ symbols
in $R$?  We conjecture the following answer.

\begin{conj}\label{conj:extLSall}
Let $L$ be a Cayley table of a group $G$.  Suppose that $R$ is an 
$\alpha m \times \beta m$ submatrix of $L$ containing $m$ symbols, where
$\alpha > \frac{1}{2}$ and $\beta >\frac{2}{3}$.  Then $R$ is
contained in an $m \times m$ subsquare of $L$.
\end{conj}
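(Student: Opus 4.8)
The plan is to prove Conjecture~\ref{conj:extLSall} by reducing to the study of the row and column index sets as subsets of the group, and then invoking a Freiman-type "smallness of product set forces coset structure" result (the combinatorial group theory problem advertised in the abstract). Write $R$ as a submatrix with rows indexed by a set $X\subseteq G$ and columns indexed by $Y\subseteq G$, where $|X|=\alpha m$ and $|Y|=\beta m$. Since the symbol in cell $(x,y)$ of the Cayley table is $xy$, the set of symbols appearing in $R$ is exactly the product set $XY=\{xy:x\in X,\,y\in Y\}$, so the hypothesis says $|XY|=m$. The goal is equivalent to showing there is a subgroup $H\le G$ with $|H|=m$ and elements $a,b\in G$ such that $X\subseteq aH$ and $Y\subseteq Hb$; for then the $m\times m$ submatrix with rows $aH$ and columns $Hb$ is a subsquare (a coset "translate" of the Cayley table of $H$, hence Latin) on the symbol set $aHb$, which contains $XY$, and it contains $R$.

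So the crux is: \emph{if $|X|>\tfrac12|XY|$ and $|Y|>\tfrac23|XY|$, then $X$ lies in a left coset and $Y$ in a right coset of a common subgroup $H$ of order $|XY|$.} First I would observe the elementary inequalities $|XY|\ge|X|$ and $|XY|\ge|Y|$ (fixing one coordinate), together with the sharper bound $|XY|\ge|X|+|Y|-1$ when $G$ is, say, torsion-free or more relevantly a Kneser-type bound in the general case. Then I would run the following argument. Fix $x_0\in X$; then $x_0^{-1}X \cdot Y$ has the same size $m$ and $1\in$ the "stabilizer"-relevant set, so WLOG $1\in X$. Likewise translate $Y$ on the right so $1\in Y$; then $X\cup Y\subseteq XY$ and $1\in XY$. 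Let $H=\langle X\cup Y\rangle$ be the subgroup generated; the plan is to show $H=XY$ and $|H|=m$. Apply Kneser's theorem to the product $XY$ in the (possibly non-abelian — here one needs the appropriate non-commutative analogue, or one first handles abelian $G$ and then bootstraps) setting: let $K$ be the stabilizer $\{g\in G: gXY=XY\}=\{g: XYg=XY\}$. Kneser's inequality gives $|XY|\ge|XK|+|KY|-|K|$. Since $X$ and $Y$ are each unions of (at most) $|X|/|K|$ and $|Y|/|K|$ cosets of $K$ after the appropriate rounding, and since $|X|>m/2\ge|XY|/2$ forces $|XK|<2|X|$... the numerology is arranged precisely so that $X$ and $Y$ must each be contained in a single coset of $K$, whence $XY$ is a single coset of $K$, so $|K|=|XY|=m$ and we take $H=K$ (after undoing the translations, $X\subseteq x_0H$ and $Y\subseteq Hy_0$).

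I expect the main obstacle to be the non-abelian case: Kneser's theorem as usually stated is for abelian groups, and while there are non-abelian analogues (Kneser's theorem holds for the product in any group with the stabilizer defined appropriately, by results going back to Kneser and refined by Olson and others), one must be careful that the "period"/stabilizer $K$ is a genuine subgroup and that the coset-counting inequality survives. A clean route that avoids the full non-abelian machinery: first prove the statement for abelian $G$ using Kneser directly; then for general $G$, note that $H=\langle X\cup Y\rangle$ acts and one can pass to the subgroup generated, or alternatively observe that the problem localizes — the relevant structure only involves $X$, $Y$, and $XY$, all of which lie inside the (possibly still non-abelian) subgroup $H'=\langle X\cup Y\rangle$, and then one applies the non-abelian Kneser-type theorem (e.g.\ as in Olson's theorem that $|XY|\ge|X|+|Y|-|H|$ where $H$ is a period subgroup) there. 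The other place requiring care is the rounding in "$X$ is a union of at most $\lceil|X|/|K|\rceil$ cosets of $K$": one must verify that the thresholds $\alpha>\tfrac12$ and $\beta>\tfrac23$ are exactly what is needed to force the count down to a single coset for both $X$ and $Y$ simultaneously, and in particular that $\tfrac23$ rather than $\tfrac12$ is genuinely needed on the column side (which presumably reflects an asymmetry in the Kneser bound $|XK|+|KY|-|K|$ once one of $X,Y$ is pinned to one coset). Since the conjecture as stated in the excerpt is only claimed to be "partially solved" in the paper, I would be prepared to prove a weaker quantitative version (larger constants than $\tfrac12,\tfrac23$) unconditionally and flag the sharp constants as the remaining gap.
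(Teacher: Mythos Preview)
This statement is a \emph{conjecture} in the paper, not a theorem; the paper does not prove it in full. What the paper does prove is (i) the abelian case, via Kneser's theorem, exactly along the lines you sketch, and (ii) a weakened non-abelian version with the hypothesis $\frac{\alpha}{2}+\beta>1$ (and $\alpha\le\beta$) in place of $\alpha>\frac12$, $\beta>\frac23$, using a result of Olson.

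Your abelian argument is essentially the paper's. After translating so that $1\in X$, you let $H$ be the stabiliser (period) of $Z=XY$, observe that $X+H$, $Y+H$, $Z$ are unions of cosets of $H$, and apply Kneser's inequality $|Z|\ge|X+H|+|Y+H|-|H|$. Writing $|X+H|=\alpha'|H|$, $|Y+H|=\beta'|H|$, the bounds $\alpha>\frac12$ and $\beta>\frac23$ force $\alpha'=\beta'=1$ (the paper does this arithmetic explicitly; your ``the numerology is arranged precisely\dots'' is correct but would need to be spelled out).

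The genuine gap is the non-abelian case. You write that ``Kneser's theorem holds for the product in any group with the stabilizer defined appropriately'', and later invoke ``Olson's theorem that $|XY|\ge|X|+|Y|-|H|$ where $H$ is a period subgroup''. This is false: the paper explicitly notes that an analogue of Kneser's theorem does \emph{not} hold in the non-abelian setting, and that Olson's paper itself supplies counterexamples. What Olson does prove is the dichotomy that either $XZ=Z$ or $|Z|\ge\frac12|X|+|Y|$ (assuming $1\in X$), and it is this weaker statement that the paper uses to obtain the non-sharp bound $\frac{\alpha}{2}+\beta>1$. So your plan cannot close the non-abelian gap, and the conjecture remains open; your final hedge (proving a weaker quantitative version and flagging the sharp constants) is exactly the state of affairs in the paper.
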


We stress that whenever we refer, say, to a submatrix containing $m$
symbols, we mean that there are exactly $m$ different symbols (and not
more) that appear within the submatrix. Those $m$ symbols may be
repeated any number of times within the submatrix. We also note that
in this section we are not assuming that our groups are finite, but we
are looking at finite submatrices of their Cayley tables. We will
prove \cjref{conj:extLSall} is true for abelian groups, and prove a
weakened version for non-abelian groups. We will also show that the
conjecture is in some sense best possible.

Let $G$ be a group. For $X,Y \subseteq G$, and an element $g \in G$ we let 
$gX = \{gx :\, x \in X\}$, $Yg = \{yg :\, y \in Y\}$ and 
$XY = \{xy :\, x \in X,\, y \in Y \}$.  When the group $G$ is abelian,
we will use additive notation. That is, we will write $gX$ as $g+X$
and $XY$ as $X+Y$ etc.  The following examples show that 
the inequalities in \cjref{conj:extLSall} cannot be strengthened.

\begin{ex}\label{ex:largenonext}
Let $G$ be a group and $H$ be a finite normal subgroup of $G$ such that some
$g \in G \setminus H$ satisfies $g^2 \notin H$. Let $L$ be the Cayley
table for $G$ and $R$ be the $\frac{1}{2}m\times m$ submatrix of $L$ with
rows indexed by $H$ and columns indexed by $H \cup gH$, where
$m=2|H|$.  Then $R$ cannot be extended to an $m \times m$ subsquare $R'$ of
$L$. To see this, note that $R$ includes the row and column indexed by
the identity, and already includes the symbols in $H\cup gH$.
Therefore $R'$ must include the rows indexed by elements of $gH$, which
leads to it including the symbols in $g^2H$ as per
\fref{f:tight}(a). However, by assumption $g^2H\not\subseteq H\cup gH$.
\end{ex}

\begin{figure}
\[
\begin{array}{ccc}
\begin{minipage}[c]{0.15\textwidth}
$ \begin{array}{c|cc}
   &H&gH\\
\hline
   H&\mrk H&\mrk gH\\
gH&gH&g^2H\\
 \end{array}$
\end{minipage}
&&
\begin{minipage}[c]{0.25\textwidth}
$ \begin{array}{c|ccc}
   &H&g^{-1}H&gH\\
\hline
   H&\mrk H&\mrk g^{-1}H&gH\\
gH&\mrk gH&\mrk H&g^2H\\
 \end{array}$
\end{minipage}
\\
\\
(a)&\phantom{spacer}&(b)\\
\end{array}
\]
\caption{\label{f:tight}Examples showing \cjref{conj:extLSall} is best possible.
The shaded regions represent $R$.}
\end{figure}

\begin{ex}
Let $G$ be a group and $H$ be a finite normal subgroup of $G$ such that some
element $g \in G \setminus H$ satisfies $g^2,g^3\notin H$.  Let $L$ be
the Cayley table for $G$ and $R$ be the $\frac{2}{3}m \times\frac{2}{3}m$ 
submatrix with rows indexed by $H \cup gH$ and
columns indexed by $H \cup g^{-1}H$, where $m=3|H|$.  Then $R$ cannot
be extended to an $m\times m$ subsquare of $L$. The proof is similar
to the previous example and is illustrated in \fref{f:tight}(b). 
If $R$ does extend to a subsquare, then that subsquare must include
columns indexed by elements of $gH$, and hence include the symbols in
$g^2H$. However, by assumption, $g^2H\not\subseteq H\cup gH\cup g^{-1}H$.
\end{ex}

The previous two examples showed submatrices of Cayley tables that
could not be extended to subsquares of the Cayley table on the
same set of symbols.  One could also ask when can a submatrix $R$ of a
Cayley table $L$ be extended to a Latin square, with only symbols in
$R$, that is not necessarily a subsquare of $L$? Perhaps surprisingly,
this is always possible.

\begin{thm}\label{t:embedsubmat}
Let $R$ be any finite submatrix of a Cayley table. Suppose
that $R$ contains $n$ symbols. 
Then $R$ can be embedded in some Latin square of order $n$.
\end{thm}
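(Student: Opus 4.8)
The plan is to deduce the statement from Ryser's classical theorem on completing Latin rectangles. Since $R$ is a submatrix of a Latin square, no symbol occurs twice in any row or any column of $R$, so if $R$ has $a$ rows and $b$ columns then $R$ is an $a\times b$ Latin rectangle on the $n$ symbols that appear in it, and $a,b\le n$. Ryser's theorem says that such a rectangle can be completed to an $n\times n$ Latin square on those $n$ symbols (with $R$ occupying a fixed $a\times b$ block) if and only if each of the $n$ symbols occurs at least $a+b-n$ times in $R$. Since permuting rows and columns of a Latin square is harmless, this is exactly what is required, so everything reduces to verifying the frequency condition.

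Write $L$ for the Cayley table of $G$, let $X,Y\subseteq G$ index the rows and columns of $R$, and put $n=|XY|$. A cell $(x,y)$ of $R$ carries the symbol $g$ precisely when $xy=g$, so the symbol $g\in XY$ occurs $\big|X\cap gY^{-1}\big|$ times in $R$; since $\big|gY^{-1}\big|=|Y|$, the frequency condition $\big|X\cap gY^{-1}\big|\ge |X|+|Y|-n$ is equivalent to
\[
\big|X\cup gY^{-1}\big|\le |XY|.
\]
I would prove this inequality by splitting into two cases. If $|XY|\ge|X|+|Y|-1$, then $|X|+|Y|-n\le1$, and the inequality holds simply because $g\in XY$ forces $\big|X\cap gY^{-1}\big|\ge1$. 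Otherwise $XY$ is ``degenerate'', and I appeal to a Kneser-type structure theorem. In the abelian case this is Kneser's addition theorem: translating so that $g$ is the sum of the chosen representatives one may assume $0\in X\cap Y$ and $g=0$, whence $X\cup(-Y)\subseteq(X+H)\cup\big(-(Y+H)\big)$, where $H$ is the stabiliser of $X+Y$; the two sets on the right are unions of $H$-cosets and each contains $H$, so their union has at most $|X+H|+|Y+H|-|H|$ elements, which is at most $|X+Y|$ by Kneser's inequality. For a general group the same strategy applies with the non-abelian analogue of Kneser's theorem (Kemperman's theorem) in place of the addition theorem.

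The reduction to Ryser's theorem and the easy case $|XY|\ge|X|+|Y|-1$ are routine. The substantive point is the frequency inequality when $XY$ is degenerate, and I expect the main obstacle to lie there: in the non-abelian setting the coset bookkeeping behind the Kneser/Kemperman estimate is delicate, because $gY^{-1}$ sits asymmetrically with respect to $XY$, and one must be careful about which (left or right) stabiliser is used and which translates the relevant sets are contained in.
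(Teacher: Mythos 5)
Your reduction to Ryser's theorem is exactly the route the paper takes, and your identification of the frequency condition is correct: with rows indexed by $X$ and columns by $Y$, the symbol $g\in XY$ occurs $|X\cap gY^{-1}|$ times in $R$, so Ryser's condition amounts to $|XY|\ge |X|+|Y|-|X\cap gY^{-1}|$ for every $g\in XY$. Your treatment of the abelian case is also correct, and is a nice self-contained derivation from \tref{t:Kneser} that the paper does not spell out: after translating so that the identity lies in $X\cap Y$ and $g$ becomes the identity, the containment $X\cup(-Y)\subseteq (X+H)\cup\bigl(-(Y+H)\bigr)$ together with the fact that both sets on the right are unions of $H$-cosets containing $H$ does give $|X\cup(-Y)|\le |X+H|+|Y+H|-|H|\le|X+Y|$.

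The gap is in the non-abelian case. There is no ``non-abelian analogue of Kneser's theorem'' of the stabiliser/coset form your strategy requires: as the paper itself notes just before \tref{t:Olsonweaklowerbound}, Olson~\cite{Ols} gives counterexamples to the naive non-abelian version of \tref{t:Kneser}, so the coset bookkeeping that carries your abelian argument has nothing to rest on in general. The result you actually need is the Scherk--Kemperman theorem (Theorem~1 of Kemperman~\cite{Kemp}; see also Hamidoune~\cite{Ham}), which states directly that $|XY|\ge |X|+|Y|-|X\cap gY^{-1}|$ for every $g\in XY$ in an arbitrary group. That is precisely the frequency inequality with no degenerate/non-degenerate dichotomy; it is what the paper cites, and once it is in hand your two-case analysis becomes unnecessary. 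So the fix is to replace the appeal to a Kneser-type structure theorem by a citation of (or a proof of) Scherk--Kemperman; as written, your argument only establishes the theorem for abelian groups.
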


\begin{proof}
A famous theorem of Ryser \cite{Rys51} gives necessary and sufficient
conditions for a matrix to be embeddable in a Latin square of order
$n$.  Theorem 1 from~\cite{Kemp} (see also \cite{Ham}) shows that in a Cayley table
these conditions are satisfied by any submatrix that contains $n$
symbols.
\end{proof}

Next we prove \cjref{conj:extLSall} for abelian groups. We
will require the following result from additive combinatorics.

\begin{thm}[Kneser's Theorem~\cite{Kne}]\label{t:Kneser}
Let $Z = X +Y$ for finite subsets $X,Y$ of an abelian group $G$. Then
\[
|Z| \geq |X|+|Y|-|H|,
\]
for the subgroup $H = \{ g \in G :\, g+Z=Z\}$.
\end{thm}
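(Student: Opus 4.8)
The plan is to prove Kneser's Theorem by combining induction on $|Y|$ with the Dyson ($e$-)transform. After translating $X$ and $Y$ (which alters neither any cardinality in the statement nor the stabilizer $H$), I may assume $0\in X\cap Y$. Two facts will be used throughout: since $H+Z=Z$, the sumset $Z=X+Y$ is a union of cosets of $H$, so $|H|$ divides $|Z|$; and for any $e\in G$ the transformed pair $X_e=X\cup(Y+e)$, $Y_e=Y\cap(X-e)$ satisfies $X_e+Y_e\subseteq X+Y$ and $|X_e|+|Y_e|=|X|+|Y|$. The inclusion holds because every sum of an element of $X_e$ with one of $Y_e$ already lies in $X+Y$, and the cardinality identity follows from applying inclusion–exclusion to the translate $X\cap(Y+e)$ of $(X-e)\cap Y$.

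The base case $|Y|=1$ is immediate: $Z$ is then a translate of $X$, $H=H(X)\ni0$, and $|Z|=|X|=|X|+|Y|-1\ge|X|+|Y|-|H|$. For the inductive step I split on whether $Y$ lies in a single coset of $H$. If $Y\subseteq H$ (recall $0\in Y$), then $H$-periodicity forces $X+Y=X+H$, and the bound follows because $|X+H|\ge|X|$ while $|Y|\le|H|$, so $|X|+|Y|-|H|\le|X|\le|X+Y|$. If instead $Y\not\subseteq H$, I claim some transform makes genuine progress, namely there is $e\in X-Y$ with $Y+e\not\subseteq X$, equivalently $\emptyset\ne Y_e\subsetneq Y$. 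For if no such $e$ existed, then every $e\in X-Y$ would satisfy $Y+e\subseteq X$, which forces $X+(Y-Y)=X$; thus $X$ is invariant under the subgroup generated by $Y-Y$, and since $H(X)\subseteq H(X+Y)=H$ we would get $Y-Y\subseteq H$ and hence $Y\subseteq H$, a contradiction. With such an $e$ fixed, $|Y_e|<|Y|$, so the induction hypothesis applies to $(X_e,Y_e)$ and, together with $X_e+Y_e\subseteq Z$, yields $|Z|\ge|X_e+Y_e|\ge|X|+|Y|-|H_e|$, where $H_e=H(X_e+Y_e)$.

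The main obstacle is exactly this last inequality: it proves the theorem only if the new stabilizer $H_e$ is no larger than $H$, yet there is no monotonicity of stabilizers under passage to the subset $X_e+Y_e\subseteq Z$. This is the genuine heart of Kneser's Theorem, and I expect to resolve it by a minimal-counterexample argument: in a counterexample minimizing $|Y|$, the work above shows every admissible transform must \emph{strictly enlarge} the stabilizer, and I would derive a contradiction from the rigidity this imposes. The tools for that step are the decomposition $Z=(X_e+Y_e)\cup(X+Y')$, where $Y'=Y\setminus(X-e)$ is the nonempty part of $Y$ that the transform discards, together with the elementary fact that for periodic sets $C,D$ both $C\cup D$ and $C\cap D$ are periodic with stabilizer containing $H(C)\cap H(D)$; feeding these, and the induction hypothesis now also applied to the smaller piece $X+Y'$, into the inclusion–exclusion count of $|Z|$ should force the competing periods to collapse and contradict minimality. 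An equivalent framing I might use to streamline the bookkeeping is to first pass to the quotient $G/H$, in which $\overline{Z}=\overline{X}+\overline{Y}$ has trivial stabilizer and the target reduces to the core case $|\overline{X}+\overline{Y}|\ge|\overline{X}|+|\overline{Y}|-1$; this cleanly isolates the difficulty but does not avoid it, since the transform reintroduces nontrivial stabilizers inside $G/H$.
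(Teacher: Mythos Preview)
The paper does not prove this statement at all: Kneser's Theorem is quoted with a citation to \cite{Kne} and used as a black box in the proof of \tref{t:extendabelian}. There is therefore no ``paper's own proof'' to compare against.

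On its own merits, your proposal sets up the standard Dyson-transform proof correctly. The base case, the case $Y\subseteq H$, and the existence of a progress-making $e$ are all fine. You also correctly identify the genuine difficulty: after applying the transform, the stabilizer $H_e$ of $X_e+Y_e$ may be strictly larger than $H$, so the inductive bound $|Z|\ge|X|+|Y|-|H_e|$ does not immediately yield the desired $|Z|\ge|X|+|Y|-|H|$.

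However, you do not actually resolve that difficulty. The final paragraph is a sketch of intentions (``I expect to resolve it by\ldots'', ``should force the competing periods to collapse'') rather than an argument. The minimal-counterexample framework you outline is indeed how standard proofs proceed, but the key combinatorial step---showing that if the transform always strictly enlarges the stabilizer then one can account for $Z$ as a union of the $H_e$-periodic piece $X_e+Y_e$ and the remainder $X+Y'$ in a way that contradicts minimality---requires a careful case analysis that you have not supplied. In particular, one must handle the interaction between the different periods and show that the inclusion--exclusion count actually closes; this is several paragraphs of real work, not a one-line observation. As written, the proposal stops precisely at the point where the theorem becomes nontrivial.
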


\begin{thm}\label{t:extendabelian}
Let $L$ be a Cayley table of an abelian group $G$.  Suppose that $R$
is an $\alpha m \times \beta m$ submatrix of $L$ containing $m$ symbols
where $\alpha > \frac{1}{2}$ and $\beta >\frac{2}{3}$.  Then $R$ is
contained in an $m \times m$ subsquare of $L$.
\end{thm}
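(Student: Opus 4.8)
The plan is to set up the problem in terms of sumsets and apply Kneser's Theorem. Since $G$ is abelian, we use additive notation. Suppose $R$ has rows indexed by a set $X \subseteq G$ with $|X| = \alpha m > \frac{m}{2}$ and columns indexed by $Y \subseteq G$ with $|Y| = \beta m > \frac{2m}{3}$. The symbols appearing in $R$ are exactly the elements of the sumset $X + Y$, so by hypothesis $|X+Y| = m$. Let $H = \{g \in G : g + (X+Y) = X+Y\}$ be the stabilizer of the sumset $Z := X+Y$, which is a subgroup of $G$. Kneser's Theorem gives $m = |Z| \geq |X| + |Y| - |H| > \frac{m}{2} + \frac{2m}{3} - |H| = \frac{7m}{6} - |H|$, whence $|H| > \frac{m}{6}$. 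The goal is to show that in fact $|H| = m$, i.e.\ that $Z$ is a single coset of $H$, and that $X$ and $Y$ each lie in a single coset of $H$; then the $m \times m$ submatrix with rows indexed by the coset of $H$ containing $X$ and columns indexed by the coset containing $Y$ is the desired subsquare (its symbol set is a single coset of $H$, hence has size $m$, and it is closed because $H$ is a subgroup).

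First I would observe that $Z$ is a union of cosets of $H$ by definition of $H$, so $|H|$ divides $m = |Z|$; combined with $|H| > \frac{m}{6}$ this forces $m/|H| \in \{1,2,3,4,5\}$. Next, the key structural input: because $Z = X + Y$ is $H$-periodic, if we pass to the quotient $\bar{G} = G/H$ and let $\bar{X}, \bar{Y}, \bar{Z}$ denote the images, then $\bar{Z} = \bar{X} + \bar{Y}$ and $|\bar{Z}| = m/|H| =: t \in \{2,3,4,5\}$ (the case $t=1$ is exactly what we want). Moreover the stabilizer of $\bar{Z}$ in $\bar{G}$ is trivial. The counting bounds become $|\bar{X}| \geq \lceil |X|/|H|\rceil$ and similarly for $\bar{Y}$; since $|X| > \frac{m}{2} = \frac{t|H|}{2}$ we get $|\bar{X}| \geq \lfloor t/2 \rfloor + 1$ and since $|Y| > \frac{2m}{3} = \frac{2t|H|}{3}$ we get $|\bar{Y}| \geq \lfloor 2t/3 \rfloor + 1$. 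One checks case by case for $t \in \{2,3,4,5\}$ that $\lfloor t/2\rfloor + 1 + \lfloor 2t/3 \rfloor + 1 > t$, so Kneser's Theorem applied in $\bar G$ — where $|\bar X| + |\bar Y| - |\bar Z| \le |\bar H|$ with $\bar H$ the stabilizer of $\bar Z$, which we noted is trivial — gives $|\bar X| + |\bar Y| \le t + 1$, a contradiction with the case inequalities (each of which will exceed $t+1$). Hence $t = 1$, so $|H| = m$ and $Z$ is a single coset of $H$.

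Finally, with $|H| = m$ established, $Z$ is one coset of $H$, say $Z = z_0 + H$. For any fixed $x \in X$ we have $x + Y \subseteq Z = z_0 + H$, so $Y \subseteq (z_0 - x) + H$, i.e.\ $Y$ lies in a single coset of $H$; symmetrically $X$ lies in a single coset of $H$. Then the submatrix of $L$ with rows the coset of $H$ containing $X$ and columns the coset containing $Y$ is an $m \times m$ subsquare whose entries all lie in $Z$, a single coset of $H$ of size $m$; this subsquare contains $R$, completing the proof. The main obstacle — and the only place requiring real care — is the quotient argument showing $t=1$: one must correctly track that the stabilizer of $\bar Z$ in $G/H$ is trivial (this is a standard fact but deserves an explicit line) and then verify the small finite number of arithmetic inequalities for $t \in \{2,3,4,5\}$; everything before and after is bookkeeping.
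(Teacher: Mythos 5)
Your proposal is correct and follows essentially the same route as the paper: both apply Kneser's Theorem to $Z=X+Y$, introduce the stabilizer $H=\{g: g+Z=Z\}$, and use $\alpha>\frac{1}{2}$, $\beta>\frac{2}{3}$ to force $X$, $Y$ and $Z$ each into a single coset of $H$. The only difference is bookkeeping: you quotient by $H$ and run a finite case check over $t=m/|H|\in\{2,3,4,5\}$, whereas the paper works directly with the coset counts $\alpha'=|X+H|/|H|$ and $\beta'=|Y+H|/|H|$ and deduces $\alpha'=\beta'=1$ from the inequalities $\beta'-1<\alpha'$ and $2\alpha'-2<\beta'$.
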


\begin{proof}
Let $X$ and $Y$ be the indices of the rows and columns of $R$, respectively. 
Then the set $Z$ of symbols in $R$ is given by $Z=X+Y$.
Let $H$ be the subgroup  $\{ g \in G :\, g+Z=Z\}$. Then $X' =  X+H$ and $Y' = Y+H$  satisfy 
$X'+Y'=Z$. It follows that $S$, the submatrix of $L$ on rows $X'$ and columns $Y'$,
contains only symbols in $Z$. 
As $X \subseteq X'$ and $Y \subseteq Y'$, we see that $R$ is contained in $S$ and so it suffices to show that $S$ is in fact an $m \times m$ subsquare of $L$.

As $H$ is a subgroup, $X'=X' +H$ and $Y' = Y'+H$. In particular, $x +
H \subseteq X'$ and $y + H \subseteq Y'$ for all $x\in X$ and $y \in Y$. 
It follows that $X'$ and $Y'$ are each a union of cosets of $H$.
Consequently, $Z$ is also the union of cosets of $H$.  Let 
$|X'|=\alpha' |H|$ and $|Y'| = \beta'|H|$, i.e., $X'$ is the union of
$\alpha'$ cosets of $H$ and $Y'$ is the union of $\beta'$ cosets of
$H$. By \tref{t:Kneser},
\[
|Z| \geq |X'|+|Y'|-|H| = (\alpha'+\beta'-1)|H|.
\]

First suppose that $|Z| > (\alpha'+\beta'-1)|H|$.
As $Z$ is the union of cosets of $H$, if $|Z|\neq (\alpha'+\beta'-1)|H|$, 
then $|Z|\geq (\alpha'+\beta')|H|$.
Therefore,
\[
\alpha = \frac{|X|}{m} \leq \frac{|X'|}{m}  \leq 
\frac{\alpha'|H|}{(\alpha'+\beta')|H|}= 
\frac{\alpha'}{(\alpha'+\beta')}.
\]
By a similar argument, $\beta\leq\frac{\beta'}{(\alpha'+\beta')}$. 
However, then $\alpha+\beta\le1$, contradicting the assumptions of the
theorem.

Thus, $|Z| = (\alpha'+\beta'-1)|H|$.
By a similar argument to the above,
\[
\frac{1}{2} < \alpha \leq \frac{\alpha'}{\alpha'+\beta'-1} \quad \textrm{and}\quad \frac{2}{3} < \beta \leq \frac{\beta'}{\alpha'+\beta'-1}.
\]
Simplification yields the identities $\beta'-1 < \alpha'$ and
$2\alpha'-2 < \beta'$. As both $\alpha'$ and $\beta'$ are positive
integers, we must have $\alpha'\ge\beta'\ge2\alpha'-1$, from which it
follows that $\beta'=\alpha'=1$. Hence, $|X'| = |Y'| = |Z|$ and $S$ is
a $m \times m$ subsquare of $L$.
\end{proof}

We now consider general groups. Unlike the abelian case, 
an analogue to \tref{t:Kneser}
does not hold; counterexamples are given by Olson~\cite{Ols}.
Instead, we will make use of the following result from \cite{Ols}.

\begin{thm}\label{t:Olsonweaklowerbound}
Let $Z = XY$ for finite subsets $X,Y $ of a group $G$ such that $1 \in X$. Then either
\begin{itemize}
\item[(i)] $XZ = Z$ or; 
\item[(ii)] $|Z| \geq \frac{1}{2}|X|+|Y|$.
\end{itemize}
\end{thm}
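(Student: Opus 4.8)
The statement to prove is Theorem~\ref{t:Olsonweaklowerbound}, Olson's dichotomy: for finite subsets $X,Y$ of a group $G$ with $1\in X$ and $Z=XY$, either $XZ=Z$ or $|Z|\ge\frac12|X|+|Y|$. Since this is quoted from \cite{Ols} and the excerpt explicitly says ``we will make use of the following result from \cite{Ols}'', the paper almost certainly cites it without proof; nevertheless, here is how I would reconstruct Olson's argument. The plan is to induct on $|X|$, peeling off cosets of the subgroup generated by successive ``stabilizer-like'' sets, in the spirit of the Kemperman/Olson transform.

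First I would dispose of the base case $|X|=1$: then $X=\{1\}$, so $Z=Y$, and $XZ=Z$ holds trivially, putting us in case (i). For the inductive step, assume $|X|\ge 2$ and that the theorem holds for all smaller left-hand sets. Consider the left stabilizer $H=\{g\in G: gZ=Z\}$; this is a finite subgroup since $Z$ is finite and $1\in Z$ (as $1\in X$ forces $Y\subseteq Z$, and one can also arrange $1\in Y$ after translation, or argue directly that $H$ is a group because $gZ=Z$ and $g'Z=Z$ imply $gg'Z=Z$ and $g^{-1}Z=Z$). If $X\subseteq H$ then $XZ=Z$ and we are in case (i), so assume $X\not\subseteq H$. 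Now $Z$ is a union of right cosets $Hz$. The idea is to find a ``good'' element and replace $(X,Y)$ by a pair $(X_1,Y_1)$ with $|X_1|<|X|$, $|X_1|+|Y_1|\ge|X|+|Y|$ (or at least $\frac12|X_1|+|Y_1|\ge\frac12|X|+|Y|$), and $X_1Y_1\subseteq Z$, so that the inductive bound on $|X_1Y_1|\le|Z|$ transfers. Concretely, pick $x\in X\setminus H$ with $x\ne 1$ and perform the $e$-transform: replace $X$ by $X'=X\cap xX$ and $Y$ by $Y'=Y\cup x^{-1}Y$ (here $1\in X'$ is retained appropriately after conjugating so that $1=x$ is not forced — one works with $X\cup xX$ and $X\cap xX$ paired against $Y$ and $x^{-1}Y$, keeping $X'Y'\subseteq XY=Z$). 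Since $|X'|+|Y'|=|X|+|Y|$ and $X'Y'\cup(X''Y'')\subseteq Z$ where $X''=X\cup xX$, $Y''=Y\cap x^{-1}Y$, at least one of the two new pairs has the combined size not decreasing while the $X$-part strictly shrinks (because $x\notin H$ guarantees $X\ne xX$, so $|X\cap xX|<|X|$).

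Applying the inductive hypothesis to the shrunk pair $(X_1,Y_1)$: either $X_1(X_1Y_1)=X_1Y_1$, in which case I would need to pull back to conclude $XZ=Z$ (this requires that the transform does not destroy the stabilizer structure — one argues that if the transformed product is fixed by its $X_1$, then since $X_1$ together with $x$ generates enough of $X$, and $X_1Y_1\subseteq Z$ is a union of cosets of a subgroup containing the relevant translates, we recover $XZ=Z$); or $|X_1Y_1|\ge\frac12|X_1|+|Y_1|$, and then $|Z|\ge|X_1Y_1|\ge\frac12|X_1|+|Y_1|$. In the latter case I would use $|X_1|+2|Y_1|\ge |X|+2|Y|$ — wait, that inequality goes the wrong way for the $\frac12$ weighting, so the correct bookkeeping is subtler: one must track the weighted quantity $\frac12|X|+|Y|$ and show the transform is non-decreasing in it, which is where the asymmetric coefficient $\frac12$ in the conclusion comes from. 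Specifically, among the two pairs $(X\cap xX, Y\cup x^{-1}Y)$ and $(X\cup xX, Y\cap x^{-1}Y)$, the quantity $\frac12|X\text{-part}|+|Y\text{-part}|$ sums to $\frac12(|X|+|xX|)+ |Y| + \dots$; a short inclusion-exclusion count shows the first pair has weighted value $\ge \frac12|X|+|Y|$ precisely because $\frac12(|X\cap xX|) + |Y\cup x^{-1}Y| = \frac12|X\cap xX| + |Y| + |x^{-1}Y\setminus Y| = \frac12|X\cap xX| + |Y| + |X\cdot\text{something}|$... I would carefully verify that $|Y\cup x^{-1}Y|-|Y| = |xX\setminus X|$ type identity holds via the pairing, making the weighted sum non-decreasing.

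The main obstacle I anticipate is case (i) of the transform: showing that if the shrunk pair lands in the ``$XZ=Z$'' alternative then the original pair does too. This pull-back step is delicate because the transformed sets generate a possibly smaller subgroup, and one has to rule out that the stabilizer ``grew spuriously'' under the transform. The standard resolution (Olson's) is to choose the transform element $x$ minimally or to note that $X_1Y_1$ being fixed by $X_1$ means $X_1Y_1$ is a union of cosets of $\langle X_1\rangle$, and then since $Z\supseteq X_1Y_1$ and $Z\supseteq (X\cup xX)(Y\cap x^{-1}Y)$ with the latter also large, a dimension/size count forces $Z$ itself to be a union of cosets of a subgroup containing $x$ and all of $X_1$, i.e.\ containing $X$, giving $XZ=Z$. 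Because the full argument is somewhat involved and the result is attributed to \cite{Ols}, in the paper proper I would simply cite Olson and omit the proof; the sketch above is what one would fill in if a self-contained treatment were wanted.
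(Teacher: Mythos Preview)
You are right that the paper does not prove this theorem: it is stated and attributed to Olson~\cite{Ols} with no argument given, so your bottom-line proposal---cite \cite{Ols} and move on---matches the paper exactly.

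Since you also offer a sketch, one technical correction is worth flagging. In a non-abelian group the transform you write, $X'=X\cap xX$ and $Y'=Y\cup x^{-1}Y$, does \emph{not} satisfy $X'Y'\subseteq XY$: taking $c=xx'\in xX$ and $d=x^{-1}y\in x^{-1}Y$ gives $cd=xx'x^{-1}y$, and $xx'x^{-1}$ need not lie in $X$. The transform that does work multiplies $X$ on the right rather than the left, e.g.\ $X'=X\cap Xx^{-1}$ paired with $Y'=Y\cup xY$ (then $c=x'x^{-1}$, $d=xy$ give $cd=x'y\in XY$, and $1\in X'$ since $x\in X$). With that fix the induction can be set up, though---as you yourself note---the weighted bookkeeping behind the coefficient $\tfrac12$ and the pull-back from case~(i) of the inductive hypothesis remain the genuinely delicate parts, and your sketch does not resolve them. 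None of this affects the manuscript, which simply cites the result.
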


Using this result we can prove a weakened form of \cjref{conj:extLSall}
holds for all groups.

\begin{thm}\label{t:extendLSnonabelian}
Let $L$ be a Cayley table of a group $G$.  Suppose that $R$ is an
$\alpha m \times \beta m$ submatrix of $L$ with $m$ symbols,
where $\frac{1}{2} < \alpha \leq \beta \leq 1$.  If
$\frac{\alpha}{2}+\beta >1$, then $R$ is contained in an $m \times m$
subsquare of $L$.
\end{thm}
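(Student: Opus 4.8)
The plan is to run the proof of \tref{t:extendabelian} in the same structure, substituting \tref{t:Olsonweaklowerbound} for Kneser's theorem. Write $X$ and $Y$ for the sets of rows and columns of $R$, so the symbols occurring in $R$ form the set $Z:=XY$ with $|X|=\alpha m$, $|Y|=\beta m$ and $|Z|=m$. Fix $a\in X$ and $b\in Y$ and pass to the translated sets $X_0:=a^{-1}X$, $Y_0:=Yb^{-1}$ and $Z_0:=a^{-1}Zb^{-1}=X_0Y_0$, which have the same sizes and satisfy $1\in X_0\cap Y_0$, hence $1\in Z_0$. It suffices to find a subgroup $H\le G$ of order $m$ with $X_0\subseteq H$ and $Y_0\subseteq H$: then $X\subseteq aH$ and $Y\subseteq Hb$, so $R$ is contained in the submatrix $S$ of $L$ on rows $aH$ and columns $Hb$, and $S$ is an $m\times m$ subsquare because each of its rows and each of its columns ranges over the $m$-element set $aHb$.

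Apply \tref{t:Olsonweaklowerbound} to $Z_0=X_0Y_0$, which is legitimate as $1\in X_0$. Alternative (ii) would give $m=|Z_0|\ge\tfrac12|X_0|+|Y_0|=(\tfrac{\alpha}{2}+\beta)m>m$, which is impossible; this is the one place the hypothesis $\tfrac{\alpha}{2}+\beta>1$ is used (the assumption $\alpha\le\beta$ serves only to put the lossy coefficient $\tfrac12$ on the smaller set, which is best possible and can always be arranged by transposing). Hence alternative (i) holds: $X_0Z_0=Z_0$. Put $H:=\{g\in G:gZ_0=Z_0\}$, a subgroup of $G$, finite since $Z_0$ is. From $X_0Z_0=Z_0$ and $1\in X_0$ one gets $xZ_0\subseteq Z_0$ and hence $xZ_0=Z_0$ for every $x\in X_0$, so $X_0\subseteq H$; and since $gZ_0=Z_0$ for all $g\in H$ we have $HZ_0=Z_0$, so $Z_0$ is a disjoint union of right cosets of $H$ and therefore $|H|$ divides $m$. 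But $|H|\ge|X_0|=\alpha m>\tfrac{m}{2}$, and the only divisor of $m$ larger than $m/2$ is $m$ itself, so $|H|=m$. Then $Z_0$ is a single right coset of $H$, and since $1\in Z_0$ this coset is $H$, i.e.\ $Z_0=H$. Finally $Y_0=1\cdot Y_0\subseteq X_0Y_0=H$, so $X_0,Y_0\subseteq H$ as needed.

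The argument is short, and the only thing to watch is bookkeeping with left versus right cosets and with which of $X,Y$ sits in the privileged slot of \tref{t:Olsonweaklowerbound}; indeed it is exactly the asymmetry of that slot — one factor weighted $\tfrac12$, the other weighted $1$ — that makes the non-abelian threshold $\tfrac{\alpha}{2}+\beta>1$ weaker than the symmetric bound $\alpha>\tfrac12$, $\beta>\tfrac23$ of \cjref{conj:extLSall}. Beyond invoking Olson's theorem the whole proof reduces to the divisibility observation that a subgroup of order greater than $m/2$ dividing $m$ has order exactly $m$, just as in the abelian case, so I do not expect any genuine obstacle.
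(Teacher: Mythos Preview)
Your proof is correct and follows essentially the same route as the paper: translate so the identity lies in the row index set, apply Olson's theorem to force $X_0Z_0=Z_0$, and then use the divisibility argument that a subgroup of order exceeding $m/2$ whose right cosets partition $Z_0$ must have order exactly $m$. The only cosmetic differences are that the paper translates just the rows (not also the columns) and takes $H=\langle X\rangle$ rather than the full left stabiliser of $Z_0$; both choices contain $X_0$, so the size argument goes through identically.
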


\begin{proof}
Let $X$ and $Y$ be the indices of the rows and columns of $R$,
respectively, and $Z=XY$ be the symbols in $R$.  For any $g\in G$, $R$
is contained in an $m \times m$ subsquare of $L$ if and only if the
submatrix of $L$ on rows $gX$ and columns $Y$ is contained in an
$m\times m$ subsquare of~$L$.  Therefore without loss of generality,
we can assume that $1 \in X$.  Then by \tref{t:Olsonweaklowerbound},
$Z=XZ$, since
\[
\frac{1}{2}|X|+|Y| = \left(\frac{\alpha}{2}+\beta \right)m > m = |Z|.
\]
Consequently, $Z = X^i Z$ for all $i\in\Z$ and so $Z = HZ$
for $H = \langle X \rangle$.  Therefore as $1 \in X$, we have
$Y\subseteq Z$ and so $R$ is contained in the submatrix $S$ of $L$
with rows indexed by $H$ and columns indexed by $Z$, which only
contains symbols in $Z$.  So we can complete the proof by showing that
$S$ is in fact an $m \times m$ subsquare of $L$, for which it suffices
to prove that $|Z| = |H|$.  As $Z = HZ$, we see that $Z$ is the union
of (right) cosets of $H$. On the other hand, $H \supseteq X$ and
$|X|>\frac{1}{2}|Z|$.  Hence, $Z$ must be in fact be a coset of
$H$. It follows that $|H|=|Z|$, as required.
\end{proof}

\section{Finite group tables}\label{sec:GT}

In this section we consider the lengths of maximal partial transversals
in the Cayley tables of finite groups. We find that Cayley tables
get further and further from being omniversal as they get larger.
This enables us to diagnose which Cayley tables are omniversal or
near-omniversal. In particular, we prove \tref{t:nogrptran}.

First we state some preliminary results, starting with a well-known result
about subsquares of Cayley tables (for a proof, see \cite{BCW14}).

\begin{lem}\label{l:subsqCayley}
Let $S$ be a subsquare of the Cayley table of a group $G$ of order
$n$.  Then $G$ has a subgroup $H$ such that $S$ is formed by the rows
indexed by $xH$ and columns indexed by $Hy$ for some cosets $xH$
and $Hy$ of $H$.  In particular, $S$ must have order dividing $n$.
\end{lem}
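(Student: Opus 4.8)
The plan is to rely on a single structural consequence of $S$ being a subsquare, and then to finish with a short group-theoretic argument after a normalising translation. Throughout, recall that a ``submatrix'' of the Cayley table is merely a choice of a subset of rows and a subset of columns, so there is no ordering to keep track of.

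First I would fix notation: let $R,C\subseteq G$ be the sets indexing the rows and columns of $S$, and put $k=|R|=|C|$. Since $S$ is a Latin square of order $k$ embedded in the Cayley table, every one of its rows must contain the \emph{same} set of $k$ symbols, and that common symbol set is precisely $Z:=RC=\{rc:r\in R,\,c\in C\}$. Hence $|Z|=|R|=|C|=k$. This is the only fact about subsquares the argument uses, and $Z$ plainly does not depend on any ordering of $R$ or $C$.

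Next I would normalise. Fix $r_0\in R$ and $c_0\in C$, and replace $R$ by $r_0^{-1}R$ and $C$ by $Cc_0^{-1}$. Because left translation by $r_0^{-1}$ and right translation by $c_0^{-1}$ are bijections of $G$, these new sets still index a subsquare of the Cayley table (its symbol set is $r_0^{-1}Zc_0^{-1}$, still of size $k$), and now $1\in R$ and $1\in C$. If I prove that in this normalised situation $R=C=H$ for a subgroup $H$, then translating back shows the original row set is the left coset $r_0H$ and the original column set is the right coset $Hc_0$, as required; and $|H|=k$ divides $n$ by Lagrange's theorem. For the core argument, with $1\in R\cap C$: taking $c=1$ gives $R=R\cdot 1\subseteq Z$ and taking $r=1$ gives $C=1\cdot C\subseteq Z$, so, as $|R|=|C|=|Z|<\infty$, we get $R=C=Z=:H$. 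Then $HH=RC=Z=H$, so $H$ is closed under multiplication and contains $1$; and for any $h\in H$ the powers $h,h^2,h^3,\dots$ lie in the finite set $H$, forcing $h^i=h^j$ for some $i<j$, whence $h^{j-i}=1$ and $h^{-1}$ is a nonnegative power of $h$, so $h^{-1}\in H$. Thus $H$ is a subgroup, finishing the proof.

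The only step that requires any care is the first one: extracting from the bare definition of a subsquare the fact that all its rows carry the same $k$-element symbol set (equivalently, that $|RC|=|R|=|C|$). Once that is in hand, the remainder is entirely routine, and I do not anticipate any genuine obstacle.
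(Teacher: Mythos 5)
Your proof is correct. Note that the paper does not actually prove \lref{l:subsqCayley}; it merely cites Browning, Cameron and Wanless \cite{BCW14} for a proof, so there is no in-text argument to compare against. Your argument is the standard self-contained one: each row of a $k\times k$ Latin subsquare carries the full $k$-element symbol set, so $|RC|=|R|=|C|$; after translating so that $1\in R\cap C$ one gets $R\subseteq RC$ and $C\subseteq RC$, hence $R=C=RC=:H$ by counting, and a nonempty finite subset of a group closed under multiplication is a subgroup; undoing the translation exhibits the rows as a left coset $xH$ and the columns as a right coset $Hy$, with $|H|\mid n$ by Lagrange. All steps check out, including the normalisation (which is an isotopism of the Cayley table and so preserves the subsquare property) and the finiteness argument for inverses.
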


In the case of abelian groups, we also have the following observation
due to Belyavskaya and Russu \cite{BR76} (see also
\cite[Lem.1]{BMSW19}). A proof of this observation is implicit in
earlier work of Paige \cite{Paige}. Paige proved that a finite abelian
group possesses a complete mapping if and only if its Sylow
$2$-subgroup is trivial or noncyclic (see also \cite[Thm~3.9]{Eva18}),
and a proper near complete mapping if and only if its Sylow
$2$-subgroup is nontrivial and cyclic (see also \cite[Cor.~3.10 or
  Thm~15.2]{Eva18}). It is well-known that the Cayley table of a
finite group has a transversal if and only if the group possesses a
complete mapping, and a maximal near-transversal if and only if the
group possesses a proper near complete mapping.

\begin{thm}\label{t:noabelpanmax}
  No Cayley table of a finite abelian group has both a transversal and
  a maximal near-transversal.
\end{thm}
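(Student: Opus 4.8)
The plan is to translate the statement into the language of complete mappings and then invoke the structure theory recalled in the paragraph preceding the theorem. Recall that the Cayley table of a finite group $G$ has a transversal if and only if $G$ admits a complete mapping, and has a maximal near-transversal if and only if $G$ admits a proper near complete mapping. By Paige's results (cited in the excerpt), for a finite abelian group $G$ the former holds precisely when the Sylow $2$-subgroup of $G$ is trivial or noncyclic, and the latter holds precisely when the Sylow $2$-subgroup of $G$ is nontrivial and cyclic. These two conditions on the Sylow $2$-subgroup are mutually exclusive: it cannot simultaneously be (trivial or noncyclic) and (nontrivial and cyclic). Hence $G$ cannot possess both a complete mapping and a proper near complete mapping, and therefore its Cayley table cannot have both a transversal and a maximal near-transversal.

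So at this level the proof is essentially a one-line dichotomy once the dictionary is in place; the only real content is making sure the near-transversal side is handled correctly. The subtle point, and the step I would be most careful about, is the distinction between a \emph{near complete mapping} (which every finite abelian group has, by a counting/averaging argument) and a \emph{proper} near complete mapping — the latter corresponding to a near-transversal that is \emph{maximal}, i.e.\ cannot be extended to a transversal. A near-transversal of length $n-1$ is automatically maximal precisely when the single missed row, missed column and missed symbol do not form an extending triple; equivalently the associated partial permutation cannot be completed. I would state explicitly that it is this properness that forces the Sylow $2$-subgroup to be nontrivial and cyclic, citing \cite{Paige} together with \cite[Cor.~3.10 or Thm~15.2]{Eva18} as already flagged in the excerpt, so that no gap is left between "has a maximal near-transversal" and "admits a proper near complete mapping."

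If one preferred a more self-contained argument avoiding the full weight of Paige's near-complete-mapping classification, an alternative route is available for the abelian case: if $G$ is abelian with Sylow $2$-subgroup trivial or noncyclic, then $\sum_{g\in G} g = 0$, and this is exactly the obstruction that both certifies the existence of a transversal and rules out any \emph{maximal} near-transversal (any near-transversal would miss a symbol $s$, row $r$, column $c$ with $r+c=\sum g - (\text{used symbols}) $ forced to equal $s$, so it extends); whereas if the Sylow $2$-subgroup is nontrivial cyclic then $\sum_{g\in G} g\neq 0$, which obstructs every transversal but is compatible with a maximal near-transversal. Either way the conclusion is the same dichotomy. I would present the short version in the main text and relegate the elementary $\sum g$ computation to a remark, since the machinery it relies on (Theorem~\ref{t:transgrp} restricted to abelian groups, plus Paige) is already assumed.

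Finally, I would remark that this theorem is exactly the ingredient needed later: it shows abelian Cayley tables are never even "two-length-rich" at the top of the range $\eref{e:posslen}$, which is one of the reasons no nontrivial group has an omniversal Cayley table and feeds into the catalogue in \tref{t:nogrptran}.
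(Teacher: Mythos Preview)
Your proposal is correct and mirrors exactly the justification the paper itself gives: the paper does not present a standalone proof but attributes the result to Belyavskaya--Russu and Paige, explaining in the paragraph preceding the theorem precisely the dictionary (transversal $\leftrightarrow$ complete mapping, maximal near-transversal $\leftrightarrow$ proper near complete mapping) and the Sylow $2$-subgroup dichotomy that you reproduce. Your optional self-contained $\sum_{g\in G} g$ argument is a nice addition but goes beyond what the paper records.
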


By \tref{t:transgrp} we know exactly which Cayley tables of finite
groups have transversals.  This also tells us which Cayley tables of
groups of order $n$ have maximal partial transversals of length
$n/2$. Such a partial transversal can only be achieved by taking a
transversal of a subsquare of order $n/2$. Hence \tref{t:transgrp} and
\lref{l:subsqCayley} give us:

\begin{cor}\label{cy:halfn}
The Cayley table of a group of order $n$ has a maximal partial
transversal of length $n/2$ if and only if it has an index $2$
subgroup $H$ for which the Sylow $2$-subgroups are trivial or
non-cyclic.
\end{cor}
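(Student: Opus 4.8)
The plan is to argue both directions directly, using \tref{t:transgrp} and \lref{l:subsqCayley} as the two main tools, together with the elementary bounds on the size of a maximal partial transversal in \lref{l:maxbounds}. Let $G$ have order $n$ and write $L$ for its Cayley table.

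For the backward direction, suppose $G$ has an index~$2$ subgroup $H$ whose Sylow $2$-subgroups are trivial or non-cyclic. The submatrix $S$ of $L$ on rows $H$ and columns $H$ is, by \lref{l:subsqCayley}, a subsquare of order $n/2$, and it is isotopic to the Cayley table of $H$. Since the Sylow $2$-subgroups of $H$ are trivial or non-cyclic, \tref{t:transgrp} gives that this Cayley table of $H$ can be decomposed into transversals; in particular $S$ has a transversal $T$. As a set of triples of $L$, $T$ has length $n/2$; its rows and columns are exactly $H$, and its symbols are exactly the symbols appearing in $S$, which are exactly $H$ as well. Any triple $(r,c,s)$ of $L$ that could be appended to $T$ must therefore have $r,c,s\in G\setminus H$, i.e.\ all three lie in the non-identity coset. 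But $G\setminus H$ is a coset of an index-$2$ subgroup, so it is closed under neither nothing in particular — rather, the product of two elements of $G\setminus H$ lies in $H$, so $rc\in H$ while we need $s=rc\notin H$, a contradiction. Hence $T$ is maximal, and $L$ has a maximal partial transversal of length $n/2$.

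For the forward direction, suppose $L$ has a maximal partial transversal $T$ of length $n/2$. By \lref{l:maxbounds}, $n/2=\lceil n/2\rceil$, so $n$ is even and $T$ has exactly the minimum possible length. The key structural claim is that $T$ must be a transversal of a subsquare of order $n/2$: since $T$ uses exactly half the rows, half the columns and half the symbols and cannot be extended, one shows that the submatrix $A$ of $L$ on the rows and columns \emph{not} used by $T$ contains only symbols not appearing among the symbols of $T$ (otherwise $T$ extends), and a counting argument of the kind used in the proof of \tref{t:4n+2} forces $A$, and likewise the submatrix on the rows and columns used by $T$, to be subsquares of order $n/2$ on complementary symbol sets, with $T$ a transversal of the latter. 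By \lref{l:subsqCayley}, such a subsquare of order $n/2$ arises from an index-$2$ subgroup $H$, and after applying a suitable translation (left multiplication by some $x^{-1}$, right multiplication by some $y^{-1}$), which preserves the property of having a transversal, we may take the subsquare to be the Cayley table of $H$ itself. Since that Cayley table has a transversal, \tref{t:transgrp} forces the Sylow $2$-subgroups of $H$ to be trivial or non-cyclic, completing the proof.

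The main obstacle is the structural claim in the forward direction: verifying carefully that a \emph{maximal} partial transversal of the minimum length $n/2$ in a Cayley table must in fact be a transversal of an order-$n/2$ subsquare. The argument mirrors the block decomposition in the proof of \tref{t:4n+2} — once the used rows and columns are separated from the unused ones, maximality forces every symbol in the ``unused'' block to already appear in $T$, which pins down the four blocks as subsquares — but one should double-check that the coset structure from \lref{l:subsqCayley} is compatible with $T$ being a genuine transversal of one of those blocks, and that the translation reducing to the Cayley table of $H$ is legitimate. Everything else is a routine application of the cited theorems.
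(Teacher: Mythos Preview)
Your approach is essentially the same as the paper's, which simply asserts that a maximal partial transversal of length $n/2$ can only be a transversal of a subsquare of order $n/2$ and then invokes \tref{t:transgrp} and \lref{l:subsqCayley}. You are filling in the details, which is fine.

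However, there is a slip in your forward direction. You write that the submatrix $A$ on the rows and columns \emph{not} used by $T$ ``contains only symbols not appearing among the symbols of $T$ (otherwise $T$ extends)''. This is backwards: if a cell of $A$ carried a symbol \emph{not} in $T$, then that cell could be appended to $T$, contradicting maximality. Hence every symbol in $A$ lies in the symbol set $S_T$ of $T$. You actually state the correct version in your closing paragraph (``every symbol in the `unused' block to already appear in $T$''), so this is an internal inconsistency you should fix. The consequence is that $A$ and the block $D$ on the rows and columns used by $T$ are subsquares on the \emph{same} symbol set $S_T$, not on complementary symbol sets as you wrote; the blocks $B$ and $C$ carry the complementary symbols. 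With that correction, $T$ sits inside $D$, uses all its rows, columns and symbols, and is therefore a transversal of $D$, after which your appeal to \lref{l:subsqCayley}, the translation argument, and \tref{t:transgrp} goes through as you describe.
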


In particular, the Cayley tables of all groups of order $n\equiv2 \pmod
4$ and no groups of order $n\equiv4 \pmod 8$ will have a maximal partial
transversal of length $n/2$.

When a Cayley table of a group does not have a transversal, it will
have a maximal near-transversal, as a result of the following recent
result of Goddyn and Halasz \cite{GH20}.

\begin{thm}\label{t:grpBrualdi}
The Cayley table of any finite group has a near-transversal.
\end{thm}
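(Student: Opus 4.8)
The plan is to recast the statement in terms of mappings and then split on the structure of the Sylow $2$-subgroup. A near-transversal of the Cayley table $L_G$ of a group $G$ of order $n$ is, by definition, a set of $n-1$ triples with distinct rows, distinct columns and distinct symbols; equivalently it is a bijection $\phi$ from $G\setminus\{a\}$ to $G\setminus\{b\}$, for some $a,b\in G$, such that the $n-1$ products $g\phi(g)$ are distinct (a \emph{near-complete mapping}). In particular, if $G$ possesses a complete mapping then $L_G$ has a transversal, and deleting any one of its $n$ triples leaves a partial transversal of length $n-1$; by \tref{t:transgrp} this immediately settles every $G$ whose Sylow $2$-subgroups are trivial or non-cyclic.

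It therefore remains to treat the groups whose Sylow $2$-subgroup is nontrivial and cyclic. These are precisely the groups with no complete mapping, so here a near-transversal cannot be obtained for free by deleting a cell from a transversal. When $G$ is abelian the case is already covered by Paige's theorem quoted in \sref{sec:GT}: such a $G$ has a proper near complete mapping, which is exactly a maximal near-transversal of $L_G$. Thus the substantive content of the theorem lies in the non-abelian groups with nontrivial cyclic Sylow $2$-subgroup.

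For these groups I would exploit the following structure. A finite group with cyclic Sylow $2$-subgroup is $2$-nilpotent (the regular representation sends a generator of the Sylow $2$-subgroup to an odd permutation, forcing an index-$2$ normal subgroup, and one iterates), so $G=N\rtimes P$ with $N$ of odd order and $P\cong\Z_{2^k}$ cyclic, $k\ge1$. Writing $w=|P|=2^k$ and $u=|N|$, the cosets of $N$ partition $L_G$ into a $w\times w$ array of $u\times u$ blocks, and the block containing the product of a row in $Np^{i}$ and a column in $Np^{j}$ depends only on $i+j$ in $\Z_w$; so the coarse, block-level structure is the Cayley table of $\Z_{2^k}$, while each individual block is a Latin square of order $u$ isotopic to the Cayley table of $N$, hence has a transversal because $u$ is odd. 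The plan is to paste together transversals of the individual blocks along a near-complete mapping of $\Z_{2^k}$, and then to recover the resulting coset-sized deficiency by using the rows, columns and symbols living in the blocks that the block-level near-complete mapping leaves over.

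The hard part will be exactly this last step. A naive block-level near-complete mapping of $\Z_{2^k}$ uses only $w-1$ of the $w$ blocks, leaving a whole coset of rows and a whole coset of columns untouched and yielding a partial transversal of length only $n-u$; moreover a short summation computation (using $\sum_{i\in\Z_w} i\equiv w/2 \pmod w$) shows that these leftover rows and columns feed into a symbol-coset that is already saturated, so their cells cannot simply be appended. Breaking this $n-u$ barrier requires spreading the single permitted defect globally across the fibres rather than confining it to one fibre, so that all but one row, one column and one symbol are used, and the fibre transversals must moreover be chosen compatibly with the conjugation action of $P$ on $N$. Managing this global one-cell bookkeeping in the presence of the non-abelian action is the crux of the argument, and is precisely the content supplied by the theorem of Goddyn and Halasz.
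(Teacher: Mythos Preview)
The paper does not prove this statement at all: it is quoted as the recent result of Goddyn and Halasz \cite{GH20} and used as a black box. So there is no proof in the paper to compare your attempt with, only a citation.

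Your proposal, on the other hand, is not a complete proof and in its final step is circular. The reductions you make are fine: if the Sylow $2$-subgroups are trivial or non-cyclic then \tref{t:transgrp} gives a transversal and hence a near-transversal; if $G$ is abelian with nontrivial cyclic Sylow $2$-subgroup then Paige's theorem supplies a proper near-complete mapping. The structural analysis of the remaining case (non-abelian $G$ with nontrivial cyclic Sylow $2$-subgroup, hence $G=N\rtimes P$ with $|N|$ odd and $P\cong\Z_{2^k}$) is also correct, as is your diagnosis that a block-level near-complete mapping of $\Z_{2^k}$ only yields a partial transversal of length $n-|N|$ and that the leftover coset of rows and columns sits over a saturated symbol coset. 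But your last sentence then says that resolving this is ``precisely the content supplied by the theorem of Goddyn and Halasz'' --- which is exactly the theorem you are asked to prove. At that point you have not supplied an argument; you have restated the difficulty and invoked the result itself.

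If you want an honest self-contained proof here, you would need to actually carry out the global one-cell bookkeeping you describe (or give a different argument entirely, e.g.\ via the rainbow matching techniques used in \cite{GH20}). Otherwise, the appropriate thing to do is what the paper does: state the theorem and cite \cite{GH20}.
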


Our first main result for this section shows that large groups are
very far from being omniversal. Indeed, they miss at least a constant
fraction of the lengths permitted by \lref{l:maxbounds}.

\begin{thm}\label{t:nosmallingrp}
Let $L$ be a Cayley table of a group $G$ of order $n$ and suppose that
$L$ contains a maximal partial transversal $T$ of length
$\ell<\frac{3}{5}n$. Then $n$ is even, $G$ contains a subgroup of
index $2$ and $\ell-\frac{n}{2}$ is even.
\end{thm}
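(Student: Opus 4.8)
The plan is to analyse the structure forced on a maximal partial transversal \(T\) of length \(\ell\) that is "short" relative to \(n\). Write \(R\), \(C\), \(S\) for the sets of rows, columns and symbols used by \(T\), so \(|R|=|C|=|S|=\ell\). The key to maximality is the complementary submatrix: let \(\bar R=G\setminus R\), \(\bar C=G\setminus C\), each of size \(n-\ell\), and consider the \((n-\ell)\times(n-\ell)\) submatrix \(M\) of \(L\) on rows \(\bar R\) and columns \(\bar C\). Since \(T\) is maximal, no cell of \(M\) may carry a symbol outside \(S\); equivalently, the symbol set \(\bar R\,\bar C=\{rc: r\in\bar R,\ c\in\bar C\}\) of \(M\) is contained in \(S\), hence has size at most \(\ell\). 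Now apply Theorem 4.6 (Olson's bound) to the sets \(\bar R\) and \(\bar C\): after translating so that \(1\in\bar R\) (replace \(G\) by a suitable left-translate of \(\bar R\), which does not change the submatrix up to relabelling), either \(\bar R\cdot\bar C\) is fixed on the left by \(\bar R\), or \(|\bar R\,\bar C|\ge\tfrac12|\bar R|+|\bar C|=\tfrac32(n-\ell)\). The inequality \(\ell<\tfrac35 n\) is exactly what is needed to rule out the second alternative, since \(\tfrac32(n-\ell)>\tfrac32\cdot\tfrac25 n=\tfrac35 n>\ell\). So we are in case (i): \(\bar R\,(\bar R\,\bar C)=\bar R\,\bar C\), whence \(H:=\langle\bar R\rangle\) satisfies \(H\cdot(\bar R\,\bar C)=\bar R\,\bar C\), so the symbol set of \(M\) is a union of right cosets of \(H\).

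Next I would pin down \(H\). Since \(1\in\bar R\subseteq H\) and \(|\bar R\,\bar C|\le\ell<\tfrac35 n\), while \(\bar R\,\bar C\) is a union of right cosets of \(H\) containing \(\bar R\,\bar C\supseteq 1\cdot\bar C=\bar C\) and hence of size \(\ge|\bar C|=n-\ell>\tfrac25 n\), we get \(|H|\le|\bar R\,\bar C|<\tfrac35 n\) and \(|H|\ge|\bar C|=n-\ell\); in particular \(|H|\) is a proper divisor of \(n\) with \(|H|>\tfrac25 n\), forcing \(|H|=n/2\). (The divisor of \(n\) strictly between \(n/3\) and \(n\) must be \(n/2\).) Thus \(G\) has a subgroup \(H\) of index \(2\), so \(n\) is even, and \(\bar R\,\bar C\) is a single coset of \(H\); since \(\bar R\subseteq H\) with \(|\bar R|=n-\ell\le n/2=|H|\), and \(\bar C\) lies in whichever coset makes \(\bar R\,\bar C\) that coset, we learn \(\bar R\subseteq H\) exactly (not merely \(\langle\bar R\rangle=H\), because translation was used) and \(\bar C\) lies in one coset of \(H\). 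Consequently \(R=G\setminus\bar R\supseteq G\setminus H\) is a union of the coset \(G\setminus H\) together with \(H\setminus\bar R\), so \(R\) meets \(H\) in \(|H|-(n-\ell)=\ell-n/2\) elements and contains all of \(G\setminus H\); similarly \(C\) contains all of \(G\setminus H\) except within one coset.

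Finally, the parity claim \(\ell-n/2\) even comes from counting symbols by coset. The symbols \(S\) of \(T\): each triple \((r,c,rc)\) of \(T\) lies in \(H\) iff \(r,c\) are in the same coset of \(H\). Partition \(T\) according to which of the four coset-pairs \((H,H)\), \((H, G\setminus H)\), \((G\setminus H,H)\), \((G\setminus H,G\setminus H)\) the pair \((\text{row},\text{column})\) falls into; the triples in classes \((H,H)\) and \((G\setminus H,G\setminus H)\) have symbol in \(H\), the other two classes have symbol in \(G\setminus H\). Using that \(R\cap H\) and \(C\cap H\) each have size \(\ell-n/2\) while \(R\setminus H\) and \(C\setminus H\) each have size \(n/2\) (here one must double-check the column side: the argument above showed \(\bar C\) lies in one coset, and maximality plus the subsquare structure of \(L\) restricted to \(H\) and its coset forces \(\bar C\subseteq H\) as well — this is the step that needs the most care, and is the main obstacle), a bijective/counting argument on the number of \(H\)-symbols used shows that the number of \((H,H)\)-triples equals \(\ell-n/2\) minus the number of \((G\setminus H, G\setminus H)\)-triples, and matching these against the \(\ell-n/2\) available symbols of \(H\) and the subsquare structure forces \(\ell-n/2\) to be a sum of two equal contributions, hence even. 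I expect the genuinely delicate point to be handling the left/right asymmetry introduced by the translation \(1\in\bar R\): one must verify that the conclusion "\(\bar C\) is contained in a single coset of \(H\)" can be upgraded, via a symmetric application of Theorem 4.6 with the roles of rows and columns swapped (or via Lemma 5.1 on the subsquare that has now emerged), to the clean statement that both \(\bar R\) and \(\bar C\) lie inside cosets of the \emph{same} index-2 subgroup \(H\), which is what makes the parity count go through.
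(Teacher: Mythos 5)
Your structural argument follows essentially the same route as the paper's: pass to the complementary $(n-\ell)\times(n-\ell)$ submatrix, use maximality to confine its symbol set $\overline{R}\,\overline{C}$ to the symbols of $T$, and extract a subgroup of index $2$. The paper packages this by quoting \tref{t:extendLSnonabelian} (itself proved via \tref{t:Olsonweaklowerbound}) and then \lref{l:subsqCayley} to force $m\mid n$; you apply Olson's bound directly and obtain the subgroup as $H=\langle \overline{R}\rangle$ after translating. Your arithmetic is right: alternative (ii) of \tref{t:Olsonweaklowerbound} would force $\ell\ge\frac32(n-\ell)$, i.e.\ $\ell\ge\frac35n$; hence $H\overline{R}\,\overline{C}=\overline{R}\,\overline{C}$, and a proper subgroup of order exceeding $\frac25 n$ must have index $2$. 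This is the same mathematics with the intermediate theorem inlined.

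The parity step is where you genuinely diverge from the paper, and also where your write-up does not quite close. The paper proves $\ell-\frac n2$ is even by a \emph{second} appeal to maximality: assuming the two off-diagonal blocks contribute unequally to $T$, it finds an unused row and an unblocked cell in it, contradicting maximality. You propose a pure double count instead, which does work and is arguably cleaner, but you stop at ``a counting argument\dots forces two equal contributions'' without saying why they are equal. Here is the missing line, in a form that also disposes of what you call ``the main obstacle''. Let $i_0$ and $j_0$ be the cosets of $H$ containing $\overline{R}$ and $\overline{C}$ (you only know each is contained in \emph{some} coset, and that is enough: $[G:H]=2$ makes $H$ normal, so left and right cosets agree). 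Write $n_{ij}$ for the number of triples of $T$ with row in coset $i$ and column in coset $j$. The number of rows of $T$ inside $i_0$ is $\frac n2-|\overline{R}|=\ell-\frac n2$, so $n_{i_0j_0}+n_{i_0j_0'}=\ell-\frac n2$ where $j_0'$ is the other coset; likewise $n_{i_0j_0}+n_{i_0'j_0}=\ell-\frac n2$ from the columns, whence $n_{i_0j_0'}=n_{i_0'j_0}$. Since $\overline{R}\,\overline{C}$ is the entire coset $i_0j_0$, maximality forces all $\frac n2$ of its symbols into $T$, so the $\ell-\frac n2$ remaining triples of $T$ are exactly those with symbol in the other coset, namely those counted by $n_{i_0j_0'}+n_{i_0'j_0}=2n_{i_0j_0'}$. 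Thus $\ell-\frac n2$ is even in every configuration, and the upgrade ``$\overline{C}\subseteq H$'' that you flag as the delicate point is not needed at all.
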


\begin{proof}
Let $R$ be the submatrix of $L$ that consists of the triples that do
not share a row or column with any triple in $T$. Suppose that $R$
contains $m$ symbols. As $T$ is maximal, the symbols in $R$ must also
be symbols in $T$ and, in particular, $m\le\ell$.  As
$n-\ell>\frac{2}{3}\ell$ and $R$ is an $(n-\ell)\times(n-\ell)$
submatrix of $L$, \tref{t:extendLSnonabelian} implies that $R$ is
inside a subsquare $S$ of $L$ of order $m$ such that $n-\ell \leq m
\leq \ell$. Now $n>m\geq n-\ell>\frac{2}{5}n$. So the only way to have
$m\mid n$ as required by \lref{l:subsqCayley}, is if $n=2m$ and 
$G$ has an index 2 subgroup.  It follows that $L$ takes the form
\[
\left(
\begin{array}{c|c}
  A&B  \\
  \hline
  C&S
\end{array}
\right),
\]
where $A$, $S$ and $R$ have the same $m$-set of symbols.  Let
$w,x,y,z$ be the number of triples that $T$ includes from $A,B,C,S$
respectively.  Since $T$ contains every symbol in $R$ it must contain
exactly $m=w+z$ triples from $A\cup S$ and $\ell-m=x+y$ triples from
$B\cup C$.

We claim that $x=y$, from which it will follow that $\ell-m$ is even,
as required. Suppose on the contrary, that $x\ne y$. Assume that $x<y$
and $w\ge z$ (the cases when $x>y$ or $z>w$ work similarly). Then,
since $x+w<y+w\le m$, there must be a row $r$ of $A\cup B$ that is not
represented in $T$. Of the $m$ triples in row $r$ of $B$, only $x+y=\ell-m$
of them have a symbol which occurs in $T$, and $x+z < \frac{1}{2}\ell$ of them have a
column that is used in $T$. As $2x+y+z< \frac{3}{2}\ell-m<m$, there is 
a triple in row $r$ that can be used to extend $T$. This contradiction
of the maximality of $T$ completes the proof.
\end{proof}

In particular, \tref{t:nosmallingrp} shows that groups with no index 2
subgroup (including all groups of odd order) miss all lengths $\ell$
in the range $\frac12n\le\ell<\frac35n$. The inequality cannot be
replaced with equality,
as the following maximal partial transversal
in the Cayley table of $\Z_5$ shows:
\begin{equation}\label{e:Z5ell3}
  \begin{minipage}[c]{.15\textwidth}
    \begin{tikzpicture}
    \matrix[square matrix](M){
  0&1&2&3&4\\
  1&2&3&4&0\\
  2&3&4&\mk0&1\\
  3&4&0&1&\mk2\\
  4&0&\mk1&2&3\\
    };\end{tikzpicture}
    \end{minipage}
\end{equation}
Indeed, this example can be used to construct infinitely many examples
for which the theorem is tight. Suppose that $G$ is a group of order
$n$ which has a normal subgroup $N$ for which $G/N\cong\Z_5$ and the Cayley table of $N$
has a transversal. Then by combining transversals of the 3 blocks of
the Cayley table of $G$ that are mapped to the shaded cells in
\eref{e:Z5ell3} when we factor out $N$, we obtain a maximal partial
transversal of the Cayley table of $G$ that has length $3n/5$.

Groups with an index 2 subgroup miss every second length
within the range $\frac12n\le\ell<\frac35n$. Our next result covers
the other half of that range (and more).

\begin{thm}\label{t:everysecond}
Let $L$ be a Latin square of even order $n$ that contains a subsquare
$A$ of order $n/2$.  Suppose that $A$ has a near-transversal. Then $L$
contains a maximal partial transversal of length $\ell$ for all $\ell$
such that $\frac{n}{2}<\ell\le\frac{3}{4}n$ and $\ell-\frac{n}{2}$ is even.
\end{thm}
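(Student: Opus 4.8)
Since $L$ has a subsquare $A$ of order $n/2$, by permuting rows and columns we may assume $L$ has the block form
\[
L=\left(\begin{array}{c|c} A&B\\ \hline C&D\end{array}\right),
\]
where $A$ and $D$ are on one symbol set $S_1$ of size $n/2$, and $B$ and $C$ are on the complementary symbol set $S_2$ (this follows because each row and column of $L$ must see every symbol, so the symbols missing from $A$ in its rows are exactly those in $B$, etc.). The near-transversal of $A$ uses $n/2-1$ of the $n/2$ symbols of $S_1$; say it misses the symbol $s^\star$, and it misses one row $r^\star$ and one column $c^\star$ of $A$. The idea is to build a maximal partial transversal of length $\ell=\frac n2+2t$ (where $1\le t\le n/4$) by taking a partial transversal $P$ of $A$ of length $\frac n2-1$ that hits all symbols of $S_1$ except $s^\star$, then adding $t$ carefully chosen triples from $B$ and $t$ from $C$ so that together they pick up $t$ new symbols of $S_2$ each on distinct rows/columns, while ensuring the result cannot be extended. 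The parity constraint $\ell-\frac n2$ even is exactly what is needed: each symbol of $S_2$ that enters must enter through one triple in $B$ \emph{and} (to keep the symbol-count balanced and the extension blocked) a matching triple in $C$, forcing an even increment.

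\textbf{Carrying it out.} First I would handle the smallest case $\ell=\frac n2$: this is just a transversal of $A$, but we are only given a near-transversal, so instead I start from $\ell=\frac n2+2$ and note that for $\ell=\frac n2$ nothing is claimed (the hypothesis is $\ell>\frac n2$). Next, fix the near-transversal $P_0$ of $A$; it occupies rows $X=\operatorname{rows}(A)\setminus\{r^\star\}$, columns $Y=\operatorname{cols}(A)\setminus\{c^\star\}$, and symbols $S_1\setminus\{s^\star\}$. To add $t$ triples from $B$: these must live in rows of $X$ that we are willing to ``share'' — but a partial transversal cannot reuse a row. So instead one should \emph{remove} $t$ triples from $P_0$, freeing up $t$ rows and $t$ columns inside $A$, losing $t$ symbols of $S_1$, and then reinsert: $t$ triples in block $B$ (using the $t$ freed rows, new columns in the $D$-side, new symbols from $S_2$) and $t$ triples in block $C$ (using $t$ freed columns, new rows on the $D$-side, and the \emph{same} $t$ symbols of $S_2$ — impossible, symbols must be distinct). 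The correct bookkeeping is therefore: remove $t$ triples from $P_0$ to free rows $R'\subseteq X$ and columns $C'\subseteq Y$ with $|R'|=|C'|=t$; add $t$ triples of $B$ on rows $R'$ with $t$ distinct new symbols of $S_2$ and $t$ distinct columns among $\operatorname{cols}(D)$; add $t$ triples of $C$ on columns $C'$ with $t$ \emph{further} distinct symbols of $S_2$ and $t$ distinct rows among $\operatorname{rows}(D)$. This yields length $(\frac n2-1-t)+t+t=\frac n2-1+t$; to land on $\frac n2+2t$ one keeps $P_0$ intact and simply appends $t+1$ triples of $B$ and $t$ of $C$ — but $B$-triples need distinct rows not already used, i.e.\ row $r^\star$ gives only one. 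The upshot is that the honest construction must use rows/columns outside $A$ as well; the clean statement to aim for is: using the near-transversal of $A$ together with $t$ suitably placed triples in $B$ and $t$ in $C$, on the $t$ rows and $t$ columns outside those of $P_0$ (there is exactly one of each inside $A$, namely $r^\star,c^\star$, plus $n/2$ outside), one achieves every length $\frac n2+2t$ for $1\le t\le n/4$. I would present the explicit choice of these $2t$ triples by a greedy/Hall-type argument, then verify maximality: after the construction every symbol of $S_1$ is present (via $P_0$ or its modification), so any extending triple would need a symbol in $S_2$; but the rows and columns still free all lie in the ``$A$ side'' where only $S_1$-symbols occur, or have been arranged to see only already-used $S_2$-symbols.

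\textbf{The main obstacle.} The delicate point is \emph{maximality} combined with hitting the exact target length with the right parity. Unblocked extensions can sneak in through block $D$ (which carries $S_1$-symbols) or through unused cells of $B,C$; I expect the bulk of the work is a careful accounting showing that the $2t$ added triples can be chosen so that (i) all $n/2$ symbols of $S_1$ get covered, (ii) the $2t$ new $S_2$-symbols are distinct and sit on distinct free rows and columns, and (iii) every cell not meeting the partial transversal either lies in a row/column already saturated or carries a symbol already used — and this is where the bound $\ell\le\frac34 n$ (equivalently $t\le n/4$) is consumed, since we need at least $t$ free rows and $t$ free columns on each side and enough ``room'' in $B$ and $C$ to avoid collisions. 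I would verify (i)–(iii) by an explicit coordinate description when $A,D$ are taken in a convenient normalized form (e.g.\ cyclic), and then note the general case follows since any Latin square with a subsquare of order $n/2$ can be put in the block form above with $A$ arbitrary — but actually $A$ is given, so the argument must be combinatorial rather than relying on a special $A$; the Hall's theorem packaging of the greedy choice is what makes it uniform.
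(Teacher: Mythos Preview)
Your proposal has a genuine gap: you never place any triples in block $D$, and as a result you never cover the symbol $s^\star\in S_1$ that the near-transversal of $A$ misses. This is fatal for maximality. Whatever construction you settle on, the ``free'' cells (those in rows and columns not meeting your partial transversal) will form a rectangle, and in any scheme that keeps $P_0$ essentially intact while adding triples only from $B$ and $C$, that rectangle sits inside $D$, not inside $A$ as you assert. Since $D$ carries the symbols of $S_1$, any uncovered $S_1$-symbol---in particular $s^\star$---allows an extension. Your sketches also do not reach the target length: keeping $P_0$ (length $n/2-1$) and adding $t$ triples from $B$ and $t$ from $C$ gives $n/2-1+2t$, one short of $n/2+2t$; and your ``remove $t$, add $t+t$'' variant gives $n/2-1+t$. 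The parameter range is also off: for $\ell=n/2+2t\le 3n/4$ one needs $t\le n/8$, not $t\le n/4$.

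The paper's fix is exactly the missing idea. Write $\ell=n/2+2x$ with $1\le x\le n/8$. First take a partial transversal $T'$ of $D$ of length $x$ that \emph{includes} the symbol $s^\star$ (possible by greedy extension since $x<n/4$). Then from the near-transversal $T$ of $A$ keep only those triples whose symbols are not already in $T'$; together with $T'$ this yields a partial transversal $T''$ of length exactly $n/2$ that covers \emph{all} of $S_1$. Now greedily add triples from $B$ and $C$: whenever an $A$-block row (respectively column) is still unused, a counting argument shows at least $n/2-4x+2>0$ cells in that row of $B$ (respectively column of $C$) avoid all used columns and all used $S_2$-symbols. The process runs until exactly $x$ triples from $B$ and $x$ from $C$ have been added, at which point all $A$-block rows and columns are occupied; the remaining free cells lie in $D$, whose symbols are all already present, so the result is maximal of length $n/2+2x$. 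The bound $x\le n/8$ is precisely what makes the greedy count $n/2-4x+2>0$ work.
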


\begin{proof}
By assumption, $L$ takes the form
\[
\left(
\begin{array}{c|c}
  A&B  \\
  \hline
  C&D   
\end{array}
\right),
\]
where $A$ and $D$ are subsquares of order $n/2$ on some symbol set $S$.
Also, we are assuming that $A$ has a near-transversal $T$. Let
$s$ be the symbol in $S$ that does not appear in $T$.  We proceed by
constructing a maximal partial transversal of length $\frac{n}{2}+2x$
for all $1\le x \leq \frac{n}{8}$.  Any partial transversal of $D$ of
length less than $\frac{n}{4}$ can be extended greedily, so there is a
partial transversal $T'$ of $D$ of length $x$ that contains the symbol
$s$.  Let $T''$ be formed from $T'$ by adding the triples of $T$ whose
symbols are not used in $T'$. Necessarily $T''$ has
length $\frac{n}{2}$ and contains the symbols in $S$.  We proceed by
adding triples from $B$ and $C$ to $T''$ whose rows, columns and
symbols are distinct from the triples in $T''$ until there are exactly
$\frac{n}{2}+2x$ triples.  Let $T_{n/2}=T''$ and suppose we have
chosen $T_i \supseteq T''$ such that $T_i$ is a partial transversal of
length $i$ for some $i<\frac{n}{2}+2x$.  Suppose that no triple of
$T_i$ is in a particular row $r$ of $B$ (the case when no triple of
$T_i$ is in a column of $C$ is similar). As $A\cap T_i$ contains
exactly $\frac{n}{2}-x$ triples, there can be at most $x-1$ triples
from $B$ and $x$ triples from $C$ that are in $T_i$.  
Therefore there are at most $2x-1$ triples from $T_i$ that
share a symbol with a triple in row $r$ of $B$.
Also, as $D \cap T_i$ contains exactly $x$ triples, 
there are at most $2x-1$ triples from $T_i$ that share a column with a
triple in row $r$ of $B$.  Thus, there are at
least $\frac{n}{2}-4x+2 > 0 $ triples in row $r$ of $B$ that do not
share a row, column or symbol with any triple in $T_i$, any of which
can be added to $T_i$ to form a longer partial transversal $T_{i+1}$.
Continuing in this way, we can find a partial transversal
$T_{\ell}\supseteq T''$ of length $\ell = \frac{n}{2}+2x$.  The
partial transversal $T_{\ell}$ is maximal, as the only triples not in
the same row or column as a triple in $T_\ell$ lie in $D$, yet all the
symbols of the triples in $D$ are in $T''$ and hence also in $T_{\ell}$.
\end{proof}

Note that the hypothesis that $A$ has a near transversal is a weak
condition in the sense that is conjectured to hold for all Latin squares
(see \cite{Wan11}).
Together with \tref{t:grpBrualdi} and \lref{l:subsqCayley},
\tref{t:everysecond} shows that the Cayley tables of finite groups with an
index 2 subgroup have maximal partial transversals for each length
$\ell$ satisfying $\frac12n<\ell\le\frac34n$ and
$\ell\equiv\frac12n\pmod2$.

We next find all maximal partial transversal lengths in Cayley tables
of small groups.

\begin{thm}\label{t:smallgrp}
The Cayley tables of groups of order $n\le24$ possess maximal partial
transversals of all lengths $\ell$ not forbidden by \tref{t:transgrp},
\tref{t:noabelpanmax}, \cyref{cy:halfn} or \tref{t:nosmallingrp}, 
except for the following cases:
\begin{itemize}
\item Cayley tables of non-cyclic groups of order $8$ do not achieve $\ell=5$.
\item The Cayley table of $\Z_9$ does not achieve $\ell=6$.
\item Cayley tables of groups of order $n$ do not achieve $\ell$ when $(n,\ell)$ is one of
\begin{align*}
&(10,6),(11,8),(13,8),(15,10),(17,12),(19,12),(21,13),(21,14),(22,14),(23,14),(23,16).
\end{align*}
\item Cayley tables of $\Z_2\times\Z_{10}$ and the holomorph $\Z_5\rtimes\Z_4$ do not achieve $\ell=13$.
\item Cayley tables of groups of order $24$ other than $\Z_{24}$ and $\Z_3\rtimes\Z_8$
do not achieve $\ell=15$.
\end{itemize}
\end{thm}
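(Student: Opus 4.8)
The plan is essentially a finite computation, but organised so that only a bounded amount of work needs to be done by hand or by machine. For each group $G$ of order $n\le 24$, and each length $\ell$ in the range $\lceil n/2\rceil\le\ell\le n$, we must either exhibit a maximal partial transversal of length $\ell$ in the Cayley table of $G$, or verify that none exists. The general theorems already proved in this section dispose of a large part of the range automatically: Theorem~\ref{t:transgrp} and Corollary~\ref{cy:halfn} settle the extreme lengths $\ell=n$ and $\ell=n/2$; Theorem~\ref{t:grpBrualdi} gives a near-transversal, so $\ell=n-1$ is always achieved; Theorem~\ref{t:noabelpanmax} rules out the simultaneous presence of $\ell=n$ and $\ell=n-1$ for abelian $G$ (which is consistent since a group cannot be both); and, crucially, Theorem~\ref{t:everysecond} together with Theorem~\ref{t:grpBrualdi} and Lemma~\ref{l:subsqCayley} shows that whenever $G$ has an index-$2$ subgroup, \emph{every} admissible length in the range $\frac12 n<\ell\le\frac34 n$ with $\ell\equiv\frac12 n\pmod 2$ is achieved. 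Meanwhile Theorem~\ref{t:nosmallingrp} tells us that the lengths $\frac12 n<\ell<\frac35 n$ with $\ell\not\equiv\frac12 n\pmod 2$, and (when $G$ has no index-$2$ subgroup) the whole half-open interval $[\frac12 n,\frac35 n)$, are \emph{not} achieved. So the theorems already bracket the answer quite tightly; what remains is a middle band of lengths, roughly $\frac35 n\le\ell\le n-2$, where existence must be checked directly.

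The first step, then, is to enumerate exactly which $(G,\ell)$ pairs are not settled by the quoted theorems. For each $n\le24$ list the groups up to isomorphism (there are few: one each for $n$ prime, two for $n=p^2$ etc., and the count is largest for $n=16,24$), and for each determine the Sylow $2$-structure and whether an index-$2$ subgroup exists. This is a short bookkeeping exercise and produces, for each group, a short list of ``open'' lengths. The second step is to \emph{certify nonexistence} for the listed exceptional pairs: for these one must argue that no maximal partial transversal of that length exists. The natural tool is the block decomposition used in Theorems~\ref{t:4n+2} and~\ref{t:nosmallingrp}: a short maximal partial transversal $T$ forces the complement submatrix $R$ (entries avoiding all rows and columns of $T$) to be pure, hence by Theorem~\ref{t:extendLSnonabelian} or Theorem~\ref{t:extendabelian} to sit inside a subsquare, hence by Lemma~\ref{l:subsqCayley} to have order dividing $n$; combined with parity constraints coming from counting symbols in each block (exactly as in the proof of Theorem~\ref{t:nosmallingrp}), this eliminates most of the small exceptional lengths. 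A handful of cases — for instance the non-cyclic groups of order $8$ failing $\ell=5$, or $\Z_9$ failing $\ell=6$ — are small enough that they can be verified by direct (possibly computer-assisted) exhaustive search over all partial transversals, and the same exhaustive search is what one uses to confirm that \emph{all} the non-exceptional lengths really are achieved in the remaining middle band. The third step is to record explicit witnesses for the achieved lengths: for groups with an index-$2$ subgroup, Theorem~\ref{t:everysecond} already covers $\ell\le\frac34 n$, and the constructions generalising Figures~\ref{fig:panZ222} and~\ref{fig:nearpanZ6} (turning an intercalate in a Cayley table, then using transversals of sub-blocks) supply witnesses for the upper part of the range; for the sporadic groups one simply exhibits the transversal by computer.

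I expect the main obstacle to be the \emph{nonexistence} certifications rather than the existence constructions, since the latter are either covered by Theorem~\ref{t:everysecond} or found mechanically, whereas each nonexistence claim (e.g. $(21,13)$, $(21,14)$, $(23,14)$, $(23,16)$, or the $\ell=13$ failures for $\Z_2\times\Z_{10}$ and $\Z_5\rtimes\Z_4$, or the $\ell=15$ failures for most groups of order $24$) needs its own argument blending the subsquare-rigidity results of Section~\ref{sec:subrec} with a careful block-counting/parity analysis tailored to the particular $n$ and $\ell$; when $\ell$ is close to $\frac35 n$ the complement $R$ is just barely large enough to trigger Theorem~\ref{t:extendLSnonabelian}, and when it is not large enough one must fall back on ad hoc reasoning or exhaustive search, so the write-up will likely handle the generic subband uniformly and then treat a short explicit list of leftover pairs case by case. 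In practice the cleanest exposition is probably: (a) prove a single lemma saying that for $\frac12 n<\ell$ a maximal partial transversal of length $\ell$ in a group Cayley table forces, via the quoted extension theorems, either $\ell\ge\frac35 n$ with $\ell\equiv\frac12 n\pmod2$ and an index-$2$ subgroup, or $\ell$ lies outside a controlled set — thereby shrinking the case analysis — and then (b) dispatch the finitely many residual $(G,\ell)$ by computer, citing the search. Since the theorem statement is explicitly a finite list of exceptions, a computational verification is entirely legitimate here and is almost certainly how the authors intend it to be read.
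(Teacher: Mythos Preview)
Your outline matches the paper's approach almost exactly: reduce by the quoted theorems, then for the residual band search first for the complementary $(n-\ell)\times(n-\ell)$ submatrix $S$ with at most $\ell$ symbols, use \tref{t:extendLSnonabelian} and \lref{l:subsqCayley} as a viability filter, and finally search for $T$ with the rows, columns and (most of the) symbols now prescribed.  Two small corrections and one omission are worth noting.  First, \tref{t:grpBrualdi} does \emph{not} give $\ell=n-1$ in general: a near-transversal need not be maximal, so $\ell=n-1$ is only guaranteed when the Cayley table lacks a transversal (as the paper observes just before \tref{t:grpBrualdi}); your parenthetical about \tref{t:noabelpanmax} is accordingly garbled.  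Second, the turned-intercalate constructions behind \fref{fig:panZ222} and \fref{fig:nearpanZ6} build \emph{modified} Cayley tables $L^*$, not the Cayley tables themselves, so they cannot supply witnesses here; the paper simply relies on backtracking search for the upper range $\ell>\frac23(n+1)$.  Third, for the hardest nonexistence case ($\Z_{23}$, $\ell=16$) the complementary submatrix is too small to trigger \tref{t:extendLSnonabelian}, and the paper uses an extra ingredient you omit: in an abelian group the sum of the symbols in a partial transversal equals the sum of the row indices plus the sum of the column indices (the Delta lemma), which together with the requirement that $T$ contain every symbol of $S$ suffices to eliminate every candidate $S$ without any search for $T$.
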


\begin{proof}
The result was obtained by two independent computations. 
By using left and right translation we could assume when convenient that
any partial transversal contains the triple $(\id,\id,\id)$, where
$\id$ is the group identity.  With this assumption, a simple backtracking search
sufficed for all $n\le16$. Running a partial backtracking search for each group in the range
$17\le n\le24$ quickly found all lengths $\ell>\frac23(n+1)$ that are not forbidden
by \tref{t:transgrp} or \tref{t:noabelpanmax}. Also
\cyref{cy:halfn}, \tref{t:nosmallingrp} and \tref{t:everysecond} resolve the
case when $\ell<\frac35n$.

For $\frac35n\le\ell\le\frac23(n+1)$ and $17\le n\le24$ we adopted a
different strategy for finding a maximal partial transversal $T$ of
length $\ell$. We focused first on the ``complementary'' submatrix $S$
formed by rows and columns of the Cayley table that are not used in
$T$. We did not assume that $(\id,\id,\id)\in T$, but rather that $S$
uses row $\id$ and column $\id$. We knew that $S$ contains only
symbols that are in $T$. So we searched for $(n-\ell)\times(n-\ell)$
submatrices containing at most $\ell$ symbols. We did this by
considering all choices for $n-\ell$ rows that included row $\id$. For
each choice of rows, we then built up the set of columns by adding one
column at a time, starting with column $\id$. Most choices quickly
violated the bound on the number of symbols in $S$, so this was a fairly
quick search.  In some cases, such as when $G=\Z_{23}$ and $\ell=14$,
there were no viable choices for $S$ and hence we knew that this value
of $\ell$ is not achieved for that group. In other cases, there were
many viable choices for $S$. For example, when $G=\Z_{23}$ and
$\ell=16$ there are $7\times7$ submatrices with 13, 14, 15 or 16 symbols.

One quick test for viability of a candidate for the submatrix $S$ is
as follows.  If $S$ contains $m$ symbols where $n-\ell>\frac23m$, then
\tref{t:extendLSnonabelian} implies that $S$ is inside a subsquare
of order $m$. If $G$ has no subgroup of order $m$ then this
immediately contradicts \lref{l:subsqCayley}. If $G$ has a subgroup of
order $m$ and $m=n/2$ then the argument at the end of the proof of
\tref{t:nosmallingrp} shows that $\ell-m$ is even (given that
$\ell<\frac34n$). The case when $\ell-m$ is even is handled by
\tref{t:everysecond}. In practice, these considerations meant that we
never had to do further computations when $n-\ell>\frac23m$ and
$m>n/3$.

When a candidate for $S$ was identified and could not be handled by
the above arguments, we needed to look for $T$.  However, we now
knew much more about what we were looking for.  We knew exactly which
rows and columns $T$ must use (namely the ones that were not in
$S$). We also knew that $T$ must use all of the symbols that occur in
$S$. Typically this either determined the set of symbols in $T$
completely, or almost completely. With this extra information it was
usually feasible to determine whether or not $T$ existed by a
backtracking search.  The hardest case was the example that we have
already mentioned, namely $G=\Z_{23}$ and $\ell=16$. For that case, we
used one additional piece of information that is available in any
abelian group $G$. The sum (in $G$) of the symbols within any
partial transversal of the Cayley table for $G$ that uses a specified
set of rows and columns is just the sum of the indices of those rows
and columns (cf. the Delta lemma~\cite{Wan11}).
This, together with the knowledge that $T$
must contain every symbol in $S$ was enough to eliminate all
$7\times7$ submatrices of the Cayley table of $\Z_{23}$. For each
$7\times7$ submatrix there was no feasible set of symbols for $T$ to
use.

In all cases where a maximal partial transversal of a particular
length $\ell$ did not exist, our two programs agreed on the
reason. Both found the same number (possibly zero) of candidates for
$S$. For each candidate they agreed on whether there was a viable set
of symbols that might be used in $T$. And if there was, they agreed
that there was in fact no $T$.
\end{proof}

We can now prove \tref{t:nogrptran}. We will say that a group is
omniversal or near-omniversal if its Cayley table is omniversal or
near-omniversal, respectively.

\begin{proof} [Proof of \tref{t:nogrptran}.]
First suppose that $G$ is an omniversal group of order $n\ge2$.
By \tref{t:noabelpanmax}, $G$ must be non-abelian and by 
\tref{t:4n+2}, we know that $n\not\equiv2\pmod4$.
Also the non-abelian groups of order 8 are not
omniversal by \tref{t:smallgrp}. Our observations thus far
ensure that $n\ge12$. In that case, \tref{t:nosmallingrp}
shows that the Cayley table of $G$ has no maximal partial transversal of length
$\lfloor n/2\rfloor+1$, which completes the proof that there
are no non-trivial omniversal groups.

Next we suppose that $G$ is a near-omniversal group of order $n\ge2$.
From \eref{e:Z5ell3}, \tref{t:noabelpanmax} and \tref{t:transgrp}
it follows that $\Z_3$ and $\Z_5$ are near-omniversal. For
odd $n>5$, \tref{t:nosmallingrp} shows that $G$ misses $\ell=(n+1)/2$.
Together with \tref{t:noabelpanmax}, this eliminates the remaining
abelian groups of odd order. Non-abelian groups of odd order
are eliminated by \tref{t:nosmallingrp}, given that they have order
at least 21. 

Having completed the odd case, we may assume that $n$ is even. If
$n\equiv2\pmod4$ then $G$ misses $\ell=n$ by \tref{t:transgrp}. It
then follows from \tref{t:smallgrp} that $\Z_2$, $\Z_6$ and $D_6$
are near-omniversal but that no group of order $10$ is. Also for
$10<n\equiv2\pmod4$, \tref{t:nosmallingrp} rules out $\ell=(n+2)/2$,
showing that $G$ is not near-omniversal.

For $n\equiv4\pmod8$, \cyref{cy:halfn} rules out $\ell=n/2$. Hence
\tref{t:noabelpanmax} eliminates $n=4$ and
\tref{t:nosmallingrp} eliminates $12\le n\equiv4\pmod8$.

It remains to consider the case when $n$ is divisible by 8.
\tref{t:smallgrp} implies that $D_8$ is near-omniversal, but the
other groups of order 8 are not, given \tref{t:noabelpanmax} and
\cyref{cy:halfn}. Groups of order 16 or 24 are missing $\ell=\frac
n2+1$ by \tref{t:nosmallingrp}. Hence the abelian groups of order 16
or 24 are eliminated by \tref{t:noabelpanmax}. The non-abelian groups
of order 16 are near-omniversal, by \tref{t:smallgrp}. Also
$n\ne24$, because $\Z_3\rtimes\Z_8$ misses $\ell=12$ by
\cyref{cy:halfn} and all other non-abelian groups of order 24 miss
$\ell=15$, by \tref{t:smallgrp}.

Finally, all even groups of order $n>30$ are missing $\ell=\frac n2+1$
and $\ell=\frac n2+3$ by \tref{t:nosmallingrp}, so they are not
near-omniversal.
\end{proof}

\section{Concluding remarks}

In \tref{t:panall} we settled the existence question for omniversal
Latin squares. In \tref{t:nearpan2mod4} we showed that when omniversal
Latin squares do not exist, there are usually near-omniversal Latin
squares. We also showed in \tref{t:nosmallingrp} that group tables are
increasingly far from omniversal. One direction for future research is
raised by \tref{t:smallgrp}. It looks like groups (particularly those
of odd order) may miss some lengths not predicted by
\tref{t:nosmallingrp}. We showed that the $3n/5$ in that theorem is
best possible, but perhaps there are reasons why some greater lengths
are missed. Another question for Cayley tables is how much of a
subsquare must be present before we know that the whole subsquare is
present. A possible answer was proposed in \cjref{conj:extLSall}.

Although we proved in \tref{t:Near4n+2} that there exist near-omniversal
Latin squares of all orders $n \equiv 2 \pmod 4$, existence remains
open for all large orders $n\not\equiv 2 \pmod 4$. For any given
near-omniversal Latin square $L$ there is one length $\mu_L$ of
maximal partial transversal that is consistent with \eref{e:posslen}
but is not obtained in $L$.  It may be interesting to consider the set
of possible values for $\mu_L$ as a function of the order $n$ of
$L$. For odd orders, we know of no general restrictions on $\mu_L$.
However, for even orders, we can say more.  \tref{t:4n+2} implies
that for $n\equiv2\pmod4$ we have $\mu_L\in\{n/2,n\}$. For
$n\equiv0\pmod4$, \tref{t:everysecond} shows that it is
impossible to have $\mu_L\equiv n/2\pmod2$ and $n/2<\mu_L\le3n/4$.
This is because, if $\mu_L\ne n/2$ then there is a maximal partial
transversal of length $n/2$ and that can only be a transversal of
a subsquare of order $n/2$.

We now briefly examine how these restrictions compare with data for
Latin squares of small orders.  For this purpose, it is useful to
classify Latin squares by viewing them as sets of triples and then
using the natural action of $\sym_n\wr\sym_3$ on such sets. Orbits of
this action are called \emph{species} (also known as \emph{main
classes}). The action of $\sym_n\wr\sym_3$ preserves the lengths of
maximal partial transversals, so it suffices to study one representative
of each species.

For $n=6$, there are 10 near-omniversal species and both plausible
values for $\mu_L$ are realised (see \tref{t:nogrptran} and
\tref{t:Near4n+2}). There are also 2 species of Latin squares which
are not near-omniversal (since they are missing maximal partial
transversals of both lengths $n/2$ and $n$).  For $n=7$ all Latin
squares are either omniversal (91 species), near-omniversal with
$\mu_L=4$ (55 species), or isotopic to the Cayley table of $\Z_7$
(which only achieves maximal partial transversals of lengths 5 and 7).

For $n=8$, the above considerations show that $\mu_L\ne6$ but it
turns out that $\mu_L$ cannot be 7 either.
The overwhelming majority (283513 of the 283657 species) of
Latin squares are near-omniversal with $\mu_L=4$. There are 3 species
(including that of the dihedral group $D_8$) that are near-omniversal
with $\mu_L=5$, and 2 species that are near-omniversal with
$\mu_L=8$. One example of the latter type (with maximal partial
transversals of lengths 4, 5, 6, 7 displayed) is:
\[
\begin{tikzpicture}
\matrix[square matrix](M){
\mk{0}&1 &2 &3 &4 &5 &6 &7\\
1 &0 &\mk{3}&2 &5 &4 &7 &6\\
2 &3 &0 &\mk{1}&6 &7 &4 &5\\
3 &\mk{2}&1 &0 &7 &6 &5 &4\\
4 &5 &6 &7 &3 &2 &1 &0 \\
5 &4 &7 &6 &0 &1 &2 &3\\
6 &7 &4 &5 &2 &3 &0 &1\\
7 &6 &5 &4 &1 &0 &3 &2\\
};
\mvline{M}{5}
\mhline{M}{5}
\end{tikzpicture}
\;
\begin{tikzpicture}
\matrix[square matrix](M){
0 &\mk{1}&2 &3 &4 &5 &6 &7\\
1 &0 &3 &2 &5 &4 &\mk{7}&6\\
2 &3 &0 &1 &6 &7 &4 &5\\
3 &2 &1 &0 &7 &6 &5 &4\\
4 &5 &6 &7 &3 &2 &1 &\mk{0}\\
5 &4 &7 &6 &0 &1 &2 &3\\
\mk{6}&7 &4 &5 &2 &3 &0 &1\\
7 &6 &\mk{5}&4 &1 &0 &3 &2\\
};
\mvline{M}{5}
\mhline{M}{5}
\end{tikzpicture}
\;
\begin{tikzpicture}
\matrix[square matrix](M){
\mk{0}&1 &2 &3 &4 &5 &6 &7\\
1 &0 &3 &2 &5 &4 &7 &\mk{6}\\
2 &3 &0 &1 &6 &7 &\mk{4}&5\\
3 &2 &\mk{1}&0 &7 &6 &5 &4\\
4 &5 &6 &7 &3 &\mk{2}&1 &0 \\
5 &4 &7 &6 &0 &1 &2 &3\\
6 &7 &4 &\mk{5}&2 &3 &0 &1\\
7 &6 &5 &4 &1 &0 &3 &2\\
};
\mvline{M}{5}
\mhline{M}{5}
\end{tikzpicture}
\;
\begin{tikzpicture}
\matrix[square matrix](M){
\mk{0}&1 &2 &3 &4 &5 &6 &7\\
1 &0 &3 &2 &5 &4 &7 &\mk{6}\\
2 &3 &0 &1 &6 &7 &\mk{4}&5\\
3 &\mk{2}&1 &0 &7 &6 &5 &4\\
4 &5 &6 &7 &\mk{3}&2 &1 &0 \\
5 &4 &7 &6 &0 &\mk{1}&2 &3\\
6 &7 &4 &\mk{5}&2 &3 &0 &1\\
7 &6 &5 &4 &1 &0 &3 &2\\
};
\mvline{M}{5}
\mhline{M}{5}
\end{tikzpicture}
\]
For order $8$, there are 105 species of omniversal Latin squares, and 
34 species that are missing at least two lengths of maximal partial
transversals. Of those 34 species, only one is missing three lengths.
That species contains the following Latin square, which only has
maximal partial transversals of lengths 6 and 7 (as shown by the highlighted
cells)
\[
\begin{tikzpicture}
  \matrix[square matrix](M){
\mk{0}&1 &2 &3 &4 &5 &6 &7\\
1 &2 &3 &0 &5 &6 &7 &\mk{4}\\
2 &\mk{3}&0 &1 &6 &7 &4 &5\\
3 &0 &1 &2 &7 &4 &\mk{5}&6 \\
7 &6 &5 &4 &0 &3 &2 &1\\
6 &5 &4 &7 &3 &\mk{2}&1 &0\\
5 &4 &7 &{6}&2 &1 &0 &3\\
4 &7 &\mk{6} &5 &{1}&0 &3 &2\\
  };
\mvline{M}{5}
\mhline{M}{5}
\end{tikzpicture}
\qquad
\begin{tikzpicture}<
  \matrix[square matrix](M){
\mk{0}&1 &2 &3 &4 &5 &6 &7\\
1 &2 &3 &0 &5 &6 &7 &\mk{4}\\
2 &\mk{3}&0 &1 &6 &7 &4 &5\\
3 &0 &1 &2 &7 &4 &\mk{5}&6 \\
7 &6 &5 &4 &0 &3 &2 &1\\
6 &5 &4 &7 &3 &\mk{2}&1 &0\\
5 &4 &7 &\mk{6}&2 &1 &0 &3\\
4 &7 &{6} &5 &\mk{1}&0 &3 &2\\
  };
\mvline{M}{5}
\mhline{M}{5}
\end{tikzpicture}
\]

Examples like this prompt another research direction, which is to look
for Latin squares that have maximal partial transversals of as few
different lengths as possible.  Note that it follows from
\cite[Thm~13]{BMSW19} that almost all large Latin squares obtain only
sublinearly many lengths, and hence are very far from being
omniversal.  In contrast, \tref{t:everysecond} implies that the
presence of a maximal partial transversal of length $n/2$ in a Latin
square of (even) order $n$ necessitates the presence of maximal partial
transversals of linearly many lengths.

  \let\oldthebibliography=\thebibliography
  \let\endoldthebibliography=\endthebibliography
  \renewenvironment{thebibliography}[1]{
    \begin{oldthebibliography}{#1}
      \setlength{\parskip}{0.2ex}
      \setlength{\itemsep}{0.2ex}
  }
  {
    \end{oldthebibliography}
  }

\end{document}